\documentclass[12pt,a4paper,reqno]{amsart}
\usepackage{amsmath,amssymb,graphics,epsfig,color,enumerate,mathrsfs, url}
 \usepackage{tikz}

\usepackage[T1]{fontenc}
\usepackage[utf8]{inputenc}

\usepackage{dsfont}  \usepackage{verbatim}\textwidth= 14. cm
\definecolor{refkey}{gray}{.75}
\definecolor{labelkey}{gray}{.5}

\newtheorem{Theorem}{Theorem}[section]

\newtheorem{Lemma}[Theorem]{Lemma}
\newtheorem{Proposition}[Theorem]{Proposition}
\newtheorem{Corollary}[Theorem]{Corollary}
\newtheorem{Remark}[Theorem]{Remark}

\newtheorem{Claim}[Theorem]{Claim}
\newtheorem{Definition}[Theorem]{Definition}

 \definecolor{darkgreen}{rgb}{0,0.6,0}

\definecolor{light}{gray}{0.9}   

\newcommand{\rosso}{\textcolor{black}}
\newcommand{\rot}{\textcolor{black}} 


\newcommand{\cA}{\ensuremath{\mathcal A}}
\newcommand{\cB}{\ensuremath{\mathcal B}}
\newcommand{\cC}{\ensuremath{\mathcal C}}
\newcommand{\cD}{\ensuremath{\mathcal D}}
\newcommand{\cE}{\ensuremath{\mathcal E}}
\newcommand{\cF}{\ensuremath{\mathcal F}}
\newcommand{\cG}{\ensuremath{\mathcal G}}

\newcommand{\cI}{\ensuremath{\mathcal I}}

\newcommand{\cK}{\ensuremath{\mathcal K}}
\newcommand{\cL}{\ensuremath{\mathcal L}}

\newcommand{\cP}{\ensuremath{\mathcal P}}
\newcommand{\cQ}{\ensuremath{\mathcal Q}}

\newcommand{\cU}{\ensuremath{\mathcal U}}

\newcommand{\cW}{\ensuremath{\mathcal W}}

\newcommand{\cZ}{\ensuremath{\mathcal Z}}


\newcommand{\bbE}{{\ensuremath{\mathbb E}} }

\newcommand{\bbL}{{\ensuremath{\mathbb L}} }

\newcommand{\bbN}{{\ensuremath{\mathbb N}} }

\newcommand{\bbP}{{\ensuremath{\mathbb P}} }
\newcommand{\bbQ}{{\ensuremath{\mathbb Q}} }
\newcommand{\bbR}{{\ensuremath{\mathbb R}} }

\newcommand{\bbZ}{{\ensuremath{\mathbb Z}} }



\newcommand{\vertiii}[1]{{\left\vert\kern-0.25ex\left\vert\kern-0.25ex\left\vert #1 
    \right\vert\kern-0.25ex\right\vert\kern-0.25ex\right\vert}}


%
%
\let\a=\alpha    \let\d=\delta  \let\e=\varepsilon
 \let\g=\gamma       \let\l=\lambda
      \let\o=\omega      
  \let\s=\sigma \let\t=\tau   
  \let\z=\zeta
\let\D=\Delta   \let\G=\Gamma  \let\L=\Lambda 
\let\O=\Omega      
\newcommand{\da}{\downarrow}






\newcommand{\be}{\begin{equation}}
\newcommand{\en}{\end{equation}}
\newcommand{\bem}{\begin{multline}}
\newcommand{\enm}{\end{multline}}
\newcommand{\bes}{\begin{equation*}}
\newcommand{\ens}{\end{equation*}}

\newcommand{\eccolo}{\cite[Lemma~8.2]{F_SEP}}

\author[A.~Faggionato]{Alessandra Faggionato}
\address{Alessandra Faggionato. Department of Mathematics, University La Sapienza, 
  P.le Aldo Moro 2, 00185 Rome, Italy}
\email{faggiona@mat.uniroma1.it}



\title[Simple exclusion  processes]{Graphical constructions of simple exclusion processes  with applications to      random environments}


\begin{document}
\begin{abstract}
We show that the symmetric simple exclusion process (SSEP) on \rot{a} countable  set is well defined by the stirring graphical construction as soon as the dynamics of a single particle is. The resulting process is  Feller, its  Markov generator is derived on local functions,   duality at the level of the  empirical density field holds. \rot{We also provide a general criterion assuring that local functions form a core for the generator}.
We then move to the simple exclusion process (SEP) and show that  the graphical construction leads to a well defined Feller  process under a percolation-type assumption corresponding to subcriticality in a percolation with random inhomogeneous parameters.  We derive its Markov generator on local functions \rot{which, under an additional general assumption,  form a core for the generator}. We discuss applications of the above results to SSEPs and  SEPs in random environments, where the standard assumptions to construct the process and investigate its basic properties (by the analytic approach or by graphical constructions)  are typically violated. As detailed in \cite{F_SSEP}, our results for SSEP  also allow  to extend the quenched hydrodynamic limit in path space obtained in \cite{F_SEP} by removing Assumption (SEP) used in there.

\smallskip

\noindent {\em Keywords}:  Feller process, Markov generator,  exclusion process, graphical construction,  duality, empirical density field, random environment.

\noindent{\em MSC2020  Subject Classification}: 
60K35,  
60K37 
60G55, 
82D30 
\end{abstract}

\maketitle


\section{Introduction}
Given a countable set $S$ and given non-negative numbers $c_{x,y}$ associated to  $(x,y)\in S\times S$ with $x\not =y$, the simple exclusion process (SEP)  with rates $c_{x,y}$ is  the interacting particle system on $S$ roughly  described a follows. At most one particle can lie on a site and  each particle - when sitting at site $x$ - attempts to jump to a site $y$ with probability rate $c_{x,y} $, afterwards the jump is allowed only if the site $y$ is empty. One can think of a family of continuous-time random walks with jump probability rates $c_{x,y}$ apart from the hard-core interaction. The  SEP is called symmetric (and will be denoted as SSEP \rosso{below}) when $c_{x,y}=c_{y,x}$.
    Of course, conditions have to be imposed to have a well defined process for all times. For example, when the particle system is given by  a single particle, the random walk with jump probability rates $c_{x,y}$ has to be well defined for all times: the holding time parameter $c_x:=\sum_{y\in S:y\not =x} c_{x,y}$ is finite for all $x\in S$ and a.s. no explosion occurs (whatever the starting site is). 

The analytic approach in \cite{L1} assures that the SEP is well defined and is a Feller  process  with state space $\{0,1\}^S$ if 
\be\label{albero}
\sup_{x\in S} \sum_{y\in S:y\not =x} \max \{ c_{x,y}, c_{y,x}\}<+\infty\,.
\en
This follows by combining the Hille-Yosida Theorem (cf.~\cite[Theorem~2.9, \rosso{Chapter~1}]{L1}) with \cite[Theorem~3.9, \rosso{Chapter~1}]{L1}. Indeed,  conditions (3.3) and (3.8) in \cite[Theorem~3.9, \rosso{Chapter~1}]{L1} are both equivalent
 to \eqref{albero} \rosso{as derived in Appendix \ref{lingotto}}. For the SSEP, \eqref{albero} can be rewritten as $\sup_{x\in S} c_x <+\infty$.
It turns out that \eqref{albero} is a too much restrictive assumption when dealing with SEPs or SSEPs  in a  random environment, i.e.~with random  rates $c_{x,y}$ and possibly with a random set $S$. In this case we will write  $c_{x,y}(\o)$ and $S(\o)$  in order to stress the dependence from the environment $\o$.  
For example one could consider the SSEP on $S=\bbZ^d$   with nearest-neighbor jumps and i.i.d. unbounded jump probability rates associated to the undirected  edges.  Or,  starting with  a simple  point process\footnote{A simple point process on $\bbR^d$ is a random locally finite subset of $\bbR^d$ \cite{DV}.} $\o:=\{x_i\}$ on $\bbR^d$ (e.g.~a Poisson point process),  one could consider the SSEP on $S(\o):=\o$ with jump probability rates $c_{x,y}$ of the form $c_{x,y}=g(|x-y|)$ for all $x\not =y$ in $\o$ and for a fixed decaying function $g$. One could also consider the Mott variable range hopping (v.r.h.), without any mean-field approximation, which describes the electron transport in amorphous solids as doped semiconductors in the regime of strong Anderson localization at low temperature \cite{AHL,MA,Mott_Nob}. 
Starting with a marked simple point process $\o:=\{ (x_i,E_i)\}$ where\footnote{By definition of marked simple point process, $\{x_i\}$ is a simple point process and $E_i$ is called \emph{mark} of $x_i$ \cite{DV}} $x_i \in \bbR^d$ and $E_i \in [-A,A]$,  Mott v.r.h. corresponds to the SEP on $S(\o):=\{x_i\}$ where, for $i\not = j$, 
\[
c_{x_i,x_j}(\o):= \exp\{-|x_i-x_j| - \max\{E_j-E_i,0\} \}\,.
\]
The list of examples can be made \rot{much} longer \rosso{(cf.~e.g.~\cite{F_SEP,F_resistor,F_SSEP} for others)}.  The above   models anyway   show that, in some contexts with disorder, conditions \eqref{albero} is typically not satisfied (i.e. for almost all $\o$ \eqref{albero}  is violated with $S=S(\o)$ and $c_{x,y}=c_{x,y}(\o)$). On the other hand, it is natural to ask if a.s. the above SEPs exist, are Feller  processes, to ask how their Markov generator   behaves on good (e.g.~local) functions, \rot{when local functions form a core} and so on.

To address the above questions we leave the analytic approach of \cite{L1}  and move to graphical constructions. The graphical approach has a long tradition in interacting particle systems, in particular also for the investigation of attractiveness and duality. Graphical constructions of SEP and SSEP are  discussed  e.g.  in  \cite{H1,H2} (also for more general exclusion processes) and in  \cite[Chapter~2]{timo}, briefly in \cite[p.~383, Chapter~VIII.2]{L1} for SEP and \cite[p.~399, Chapter~VIII.6]{L1} for SSEP as stirring processes. For the graphical constructions of  other particle systems  we mention in particular \cite[Chapter~III.6]{L1} and \cite{Du10} (see also the references in \cite[Chapter~III.7]{L1}). On the other hand, the above references again make assumptions  not compatible with many applications to particles in a random environment (e.g.~finite range jumps or rates $c_{x,y}$ of the form $p(x,y)$, $p$ being  a probability kernel).

Let us describe our contributions. In Section \ref{sec_SSEP} we consider the SSEP on a countable set $S$ (of course, the interesting case is for $S$ infinite). Under the only assumption that the continuous time random walk on $S$ with jump probability rates $c_{x,y}$ is well defined for all times $t\geq 0$ (called Assumption SSEP below), we show that the stirring graphical construction 
  leads to a well defined Feller process, with the right form of generator on local functions \rot{and on other good functions}
     (see Propositions \ref{costruzione_SSEP}, \ref{feller_SSEP}, \ref{inf_SSEP}, \rot{\ref{eastpak}}
   in Section \ref{sec_SSEP}). \rot{We also provide a general criterion assuring that local functions form a core for the generator (see Proposition \ref{volpino})}. 
     Due to  its relevance for the study of hydrodynamics and fluctuations of the  empirical field for SSEPs in random environments (see e.g. \cite{CF,F_SEP,GJ}), we also investigate duality properties of the SSEP at the level of the empirical field (see Section \ref{sec_duality}). Finally, in Section \ref{SSEP_environment} we discuss some applications to SSEPs in a random environment. We point out that   the construction  of the SSEP on a countable set \rot{$S$}, when the single random walk is well defined, can be  obtained also  by duality and Kolmogorov's theorem as in \cite[Appendix~A]{CFRS}. On the other hand, the analysis there is   limited to the existence of the stochastic process.

 We then move to the SEP.   Under what we call Assumption SEP, which is inspired by Harris' percolation argument \cite{Du,H1}, in Section \ref{sec_SEP} we show that the graphical construction leads to a  well defined Feller process and derive explicitly its generator on local functions \rot{and other good functions (see Propositions \ref{costruzione_SEP},  \ref{feller_SEP}, \ref{inf_SEP},  \ref{eastpakbis})}. The analysis generalizes the one in \cite[Chapter~2]{timo} (done for $S=\bbZ^d$, and $c_{x,y}$ of the form $p(x-y)$ for a finite range probability $p(\cdot)$ on $\bbZ^d$).  
 Checking the validity of   Assumption SEP consists of  proving subcriticality in   suitable percolation models with  random inhomogeneous parameters.  \rot{Also for SEP we  provide a general criterion assuring that local functions form a core for the generator (see Propositions \ref{volpinobis}, \ref{boreale})}. 
 In Section \ref{SEP_environment} we discuss some applications to SEPs in a random environment. We point out that  in \cite{F_SEP} we assumed what we called there ``Assumption (SEP)'', which corresponds to the validity of the present 
  Assumption SEP for a.a. realizations of the environment.  In particular, in \cite{F_SEP} we checked its  validity for some classes of SEPs in a random environment. In Section \ref{SEP_environment} we recall these results   in the present language.  As a byproduct, we also  derive the existence (and several properties of its Markov semigroup on continuous functions) of Mott v.r.h. on a marked Poisson point process. 
 We point out that the  SEPs treated in \cite{F_SEP} are indeed SSEPs. As \rot{a byproduct of}   our results in Section \ref{sec_SSEP}, the quenched hydrodynamic limit derived in \cite{F_SEP} remains valid  also by removing Assumption (SEP) there. This application will be detailed in \cite{F_SSEP}.  \\

 \noindent
 \emph{Outline of the paper}. Section \ref{sec_noto} is devoted to notation and preliminaries. In Section \ref{sec_SSEP} we describe the stirring graphical construction and  our main results for SSEP (the analogous for SEP is given in Section \ref{sec_SEP}). In Section \ref{SSEP_environment} we discuss some applications to  SSEPs in a random environment (the analogous for SEP is given in Section \ref{SEP_environment}). The other sections \rosso{and Appendix \ref{lingotto}} are devoted to proofs.

 \section{Notation and preliminaries}\label{sec_noto}
Given a topological space $\cW$ we denote by $\cB(\cW)$ the $\s$--algebra of its Borel subsets. We think of $\cW$ as a measurable space with  $\s$--algebra of  measurable subsets given by  $\cB(\cW)$.

\smallskip

 Given a metric space $\cW$ with metric $d$ satisfying $d(x,y)\leq 1$ for all $x,y \in\cW$ \rosso{(this can be assumed at cost to replace $d$ by $d\wedge 1$)},  $D_{\cW}:=D(\bbR_+, \cW)$ is the space 
  of c\`adl\`ag paths from $\bbR_+:=[0,+\infty) $ to $\cW$  endowed with  the  Skorohod distance associated to $d$. We denote this distance by $d_{\rm S}$  and for completeness we recall its definition \rosso{(see \cite[Chapter~3]{EK})}.  Let $\L$ be the family of strictly increasing bijective functions $\l:\bbR_+\to \bbR_+$ such that  
\[
\g(\l):= \sup _{s>t \geq 0} \Big|\log \frac{\l(s)-\l(t)}{s-t}\Big|<+\infty\,.
\]
 Then, given $\z,\xi\in  D_\cW$, the distance $d_S(\z,\xi)$ is defined as 
 \[
 d_{\rm S} (\z, \xi ) :=\inf_{\l \in \L} \left\{
 \g(\l) \lor \int_0 ^\infty e^{-u}   \Big[   \sup_{t\geq 0} d\Big( \z(t \wedge u) , \z(\l(t) \wedge u) \Big)\Big]  du
  \right\}\,.
 \]
\rosso{Due to \cite[Proposition~5.3, Chapter~3]{EK}} $\z_n \to \z$ in $D_\cW$ if and only if there exists a sequence $\l_n\in \L$ such that 
\be
\g(\l_n) \to 0  \qquad \text{ and }\qquad  \sup_{0\leq t \leq T} d \big( \z_n(t) \,,\, \z(\l_n(t) )\big)\to 0 \; \; \text{for all } T>0\,. 
\en
If $\cW$  is  separable,  then 
the Borel $\s$--algebra $\cB(D_\cW)$ of
$D_\cW$  coincides with the $\s$-algebra generated by the coordinate maps $\z\mapsto \z(t)$, $t\in \bbR_+$ \rosso{(see \cite[Proposition~7.1, Chapter~3]{EK}).}
If $\cW$ is Polish \rot{(i.e.~it is a complete separable metric space)}, then also  $D_{\cW}$ is Polish \rosso{(see \cite[Theorem~5.6, Chapter~3]{EK}).}  

\smallskip

We now discuss two  examples, frequently used in the rest. 
In what follows we will take  $\bbN=\{0,1,2,\dots\}$ endowed with the discrete topology and in particular with distance  $ \mathds{1}(x\not =y)$ between $x,y\in \bbN$.  $D_\bbN$ will be endowed with the Skorohod metric, denoted in this case by $\mathfrak{d}$.  Since $\bbN$ is separable,   $\cB(D_{\bbN})$ is generated by the coordinate maps $D_{\bbN}\ni  \xi \mapsto   \xi (t) \in \bbN$, $t\in\bbR_+$.

\smallskip

Given a countable infinite  set $S$, we fix once and for all an enumeration
 \be S=\{s_n\,:\, n=1,2,\dots\}\label{enu}
 \en
  of $S$ and we endow $\{0,1\}^S$ with  the metric 
\be\label{sarto}
 d(\xi,\xi'):= \sum_{n\geq 1} 2^{-n} |\xi(s_n)-\xi'(s_n)|\,.
 \en
Then this metric induces the product topology on $ \{0,1\}^S$
($\{0,1\}$ has the discrete topology). \rot{We point out that  $\{0,1\}^S$ is a Polish space. Indeed, 
$\{0,1\}^S$ 
 is a compact metric space and therefore it is also complete and  separable. 
 As a consequence also $D_{\{0,1\}^S}$ is Polish}.
Given  a path $\xi \in D_{\{0,1\}^S}$ and a time $t\geq 0$,   we  will sometimes write   $\xi_t$ instead of $\xi(t)$. In particular,  $\xi_t(x)$  will be the value at $x\in S$ of the configuration $\xi_t$. Moreover, we will usually write $\xi_\cdot$ instead of $\xi$ to denote a generic path in $D_{\{0,1\}^S}$.

 Since  $ \{0,1\}^S$ is separable,  the $\s$--algebra $\cB\left(D_{\{0,1\}^S}\right)$ is generated by the coordinate maps  $D_{\{0,1\}^S}\ni \xi_{\cdot} \mapsto \xi_t \in \{0,1\}^S$, $t\in \bbR_+$.  Since $\cB\left( \{0,1\}^S\right)$ is generated by the coordinate maps $\{0,1\}^S \ni \xi \mapsto \xi(x) \in \{0,1\}$, we conclude that 
$\cB\left(D_{\{0,1\}^S}\right)$ is generated by the maps $D_{\{0,1\}^S}\ni \xi_{\cdot} \mapsto \xi_t  (x) \in \{0,1\}$ as $t,x$ vary in $\bbR_+ $ and $S$, respectively. This will be used in what follows.

\section{Graphical construction,  Markov generator and duality of SSEP}\label{sec_SSEP}
Let $S=\{s_n:n=1,2,...\}$ be an infinite  countable set. We denote by  $\cE_S$  the family of unordered pairs of elements of $S$, i.e.~
\be\label{logo_lego}\cE_S:=\{\{x,y\}\,:\, x\not = y,\; x,y\in S\}\,.
\en

To each pair $\{x,y\}\in\cE_S$ 
 we associate a number $c_{ \{x,y\} }\in [0,+\infty)$. To simplify the notation, we write $c_{x,y}$ instead of $c_{\{x,y\}}$. Note that 
 $c_{x,y}=c_{y,x}$ 
 for all $ x\not= y $ in $S$.
 Moreover,   to simplify the formulas below, we set
\[ 
c_{x,x}:=0 \qquad \forall x \in S\,.\]
  
 The following assumption, \rot{in force throughout all this section},   will assure that the graphical construction of the symmetric simple exclusion process (SSEP) as stirring process is well posed.
  
 \smallskip
 
 {\bf Assumption SSEP.} \emph{We assume that  the following two conditions are satisfied:
 \begin{itemize}
\item[(C1)] For all $x\in S$ it holds  $ c_x:=\sum_{y\in S} c_{x,y}\in [0,+\infty)$;
\item[(C2)] For each $x\in S$ the continuous-time random walk on $S$ starting at $x$ and with jump probability rates $\bigl( \rot{c_{y,z} \,:\, \{y,z\}}\in \cE_S \bigr)$ a.s. has no explosion.
\end{itemize}
}

\smallskip


When Conditions (C1) and (C2) are satisfied, we say that the random walk \rosso{$(X_t)_{t\geq 0}$} on $S$ with jump rates (or conductances) $c_{x,y}$ is well defined (for all times). This random walk (also called \emph{conductance model}, cf.~\cite{Bi})  is built in terms of waiting times and jumps as follows.  
Arrived at (or starting at) $x$, the random walk 
 waits at $x$ an exponential time  with mean $1/c_x\in (0,+\infty]$. If $c_x=0$,  this
 waiting time is infinite. If $c_x>0$, once completed its waiting,  the random walk jumps 
  to another site $y$ chosen with probability $c_{x,y}/c_x$ (independently from the rest).  Condition (C2) 
 says that, a.s., the jump times in the above construction have no accumulation point and therefore 
 the random walk is well defined for all times. 
 \smallskip 
  

\subsection{Graphical construction of the SSEP}\label{ape_maya}
 We  consider the product space $D_\bbN^{\cE_S}$ endowed  with the product topology (recall that $D_\bbN=D(\bbR_+,\bbN)$ is endowed with the Skorohod metric $\mathfrak{d}$, see Section \ref{sec_noto}). We write $\cK= ( \cK_{x,y} )_{\{x,y\}\in \cE_S}$ for a generic element of  $ D_\bbN^{\cE_S}$. 
 The product topology on $D_\bbN^{\cE_S}$ is induced by the metric 
  \be\label{pioggina}
  d( \cK, \cK'):= \sum_{\substack{i,j\in \bbN:\\1\leq i <j }} 2^{-(i+j)} \min \left\{1, \mathfrak{d}( \cK_{s_i,s_j}, \cK_{s_i,s_j}')\right\}\,.
  \en

\begin{Definition}[Probability measure $\bbP$] We  associate to each pair  $\{x,y\}\in \cE_S $    a Poisson process $( N_{x,y}(t))_{t\geq 0}$ with intensity $c_{x,y} $ \rosso{and with $N_{x,y}(0)=0$},  such that  the $N_{x,y}(\cdot)$'s  are independent processes when varying the pair $\{x,y\}$. We define $\bbP$ as the law on $D_{\bbN}^{\cE_S}$ of the   random object $( N_{x,y}(\cdot ) )_{\{x,y\}\in \cE_S}$ \rot{and we denote by $\bbE[\cdot]$ the expectation associated to $\bbP$}.
\end{Definition}

We stress that, since the pairs $\{x,y\}$ are unordered, we have $K_{x,y}=K_{y,x}$ and $N_{x,y}=N_{y,x}$.

\smallskip

We briefly recall the graphical construction  of the SSEP as stirring process (see also Figure \ref{fig1}). The detailed description and the proof that a.s. it is well posed will be provided in Section \ref{sec_proof_SSEP}.

 Given $\cK\in D_{\bbN} ^{\cE_S}$ and $x\in S$ we define   $X_t^{x}[\cK]$ 
 as the output of  the following algorithm (see Definition \ref{rumore} for details). 
  Start at $x$ and consider the set of  all   jump times  not exceeding   $t$ of the paths of the form  $\cK_{x,y}(\cdot)$ with  $y\in S$. If this set \rot{is} empty, then stop and define $X_t^x[\cK]:=x$, otherwise take the maximum value $t_1$ in this set.  If $t_1$ is the  jump time of $\cK_{x,x_1}(\cdot)$, then move to $x_1$ and 
  consider now the set of all    jump times  strictly smaller than    $t_1$ of the paths of the form  $\cK_{x_1,y}(\cdot)$ with  $y\in S$. If this set is empty,  then stop and define 
   $X_t^x[\cK]:=x_1$, otherwise take the maximum value $t_2$ in this set and repeat the above step. Iterate this procedure  until the set of jump times is empty. Then the algorithm stops and its output  $X_t^x[\cK]$ 
   is the last  site of $S$ visited by the algorithm. \rosso{Roughly, to determine $X_t^x[\cK]$ it is enough to follow the path in the graph of Figure \ref{fig1}, starting at $x$ at time $t$, going back in time and crossing an horizontal edge every time it appears. Then $X_t^x[\cK]$ is the site visited by the path at time $0$.}
     In Section \ref{sec_proof_SSEP} we will prove that the above construction is well posed  for all $x\in S$ and $t\geq 0$ if $\cK\in \G_*$, where $\G_*$ is a suitable Borel subset of  $D_{\bbN}^{\cE_S}$
 with $\bbP(\G_*)=1$.

\begin{figure}
\includegraphics[scale=0.4]{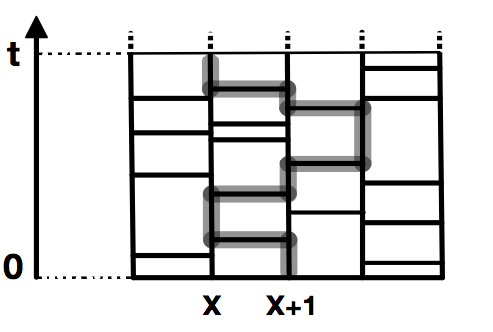}
\caption{Graphical construction of $X_t^{x}[\cK]$ when $S=\bbZ$ and   $c_{y,z}>0$ if and only if $|y-z|=1$.  $\cK$ is typical (jumps are only at edges $\{y,z\}$ with $|y-z|=1$). Vertical segments associated to the edge $\{y,z\}$ correspond to the jump times of  $\cK_{y,z}(\cdot)$. The vertexes $x_1,x_2,..$ built in the algorithm are the one visited by the bold path moving from time $t$ to time $0$. At the end one gets $X_t^{x}[\cK]=x+1$.} \label{fig1}
\end{figure}

  
 Having defined $X^x_t[\cK]$, given $\s\in \{0,1\}^S$ we set 
 \[ \eta^\s_t [\cK] (x):= \s\bigl( X^x_t[\cK]\bigr)\,.
 \] 
 Then $\eta^\s_t[\cK] \in \{0,1\}^S$. In Lemma \ref{pablito} in Section \ref{sec_proof_SSEP} we show  that   $\eta^\s_\cdot [\cK]= \bigl( \eta^\s_t[\cK]\bigr)_{t\geq 0}\in D_{\{0,1\}^S}$   and that the map $\G_* \ni \cK \mapsto  \eta^\s_\cdot [\cK]  \in D_{\{0,1\}^S} $
  is measurable in $\cK$. 

  We write $\cF$ for the $\s$--algebra $\cB ( D_{\{0,1\}^S })$ of Borel subsets of $D_{\{0,1\}^S }$. Since $\{0,1\}^S$ is separable, $\cF$ is 
  generated by the coordinate maps  $D_{\{0,1\}^S } \ni \eta \mapsto \eta_t\in \{0,1\}^S$, $t\geq 0$ (see Section \ref{sec_noto}).
  We also define
   $\cF_t$ as  the $\s$--algebra generated by the coordinate maps 
   $D_{\{0,1\}^S }\ni \eta \mapsto 
   \eta_s\in \{0,1\}^S$ with $s\in [0,t]$. Then $( D_{\{0,1\}^S }, (\cF_t)_{t\geq 0}, \cF)$ is a filtered measurable space.
  For each $\s\in \{0,1\}^S$ we define  $\bbP^{\s}$ as the probability measure on  the above filtered measurable space given by 
  \[
  \bbP^{\s} (A):= \bbP(\cK\in \G_*\,:\,  \eta^\s_\cdot [\cK]    \in A) \qquad A \in \cF\,.
  \]
  In what follows, we write $\bbE^\s$ for the expectation w.r.t. $\bbP^\s$.
   Similarly to \cite[Theorem~2.4]{timo} we get:
  \begin{Proposition}[Construction of SSEP] \label{costruzione_SSEP} The family $\big\{\bbP^\s:\s\in \{0,1\}^S\big\}$ of probability measures 
  on the  filtered measurable space $( D_{\{0,1\}^S }, (\cF_t)_{t\geq 0}, \cF)$
   is a Markov process (called \emph{symmetric simple exclusion process with conductances $c_{x,y}$}), i.e.
  \begin{itemize}
  \item[(i)] $\bbP^\s( \eta_0=\s)$ for all $\s \in \{0,1\}^S$;
  \item[(ii)] for any $A\in \cF$ the function $\{0,1\}^S\ni \s\mapsto \bbP^\s( A) \in [0,1]$ is measurable;
    \item[(iii)] for any $\s\in \{0,1\}^S$ and $A\in \cF$ it holds
    $\bbP^\s( \eta_{t+\cdot}\in A\,|\, \cF_t) = \bbP^{\eta_t}(A) $  $ \bbP^\s$--a.s.
  \end{itemize}
  \end{Proposition}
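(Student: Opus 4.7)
My plan is to prove the three clauses separately, with the Markov property (iii) coming from a cocycle-type identity for the stirring map together with the strong independence properties of Poisson processes.

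For clause (i), I would observe that the algorithm defining $X_0^x[\cK]$ looks for jump times in $[0,0]$, and since $\bbP$-a.s. no Poisson process $N_{x,y}$ has a jump at $0$, we get $X_0^x[\cK]=x$ on a set of full $\bbP$-measure, hence $\eta_0^\s[\cK]=\s$ on $\G_*$ up to a null set, giving $\bbP^\s(\eta_0=\s)=1$. For clause (ii), I would show that the map $(\s,\cK)\mapsto \eta_\cdot^\s[\cK]\in D_{\{0,1\}^S}$ is jointly measurable. Since $\cF$ is generated by the evaluations $\eta_\cdot\mapsto \eta_t(x)$ (by the last remark of Section \ref{sec_noto}), it is enough to check that $(\s,\cK)\mapsto \s(X_t^x[\cK])$ is measurable. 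This follows from writing
\[
\s(X_t^x[\cK])=\sum_{y\in S}\s(y)\,\mathds{1}\{X_t^x[\cK]=y\},
\]
which is a countable sum of products of a measurable function of $\s$ and a measurable function of $\cK$ (using the measurability of $\cK\mapsto X_t^x[\cK]$ proved in Lemma \ref{pablito}). Then $\bbP^\s(A)=\int \mathds{1}_A(\eta_\cdot^\s[\cK])\,d\bbP(\cK)$ is measurable in $\s$ by Fubini.

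For clause (iii), the key input is the time-shift $\theta_t$ on $D_\bbN^{\cE_S}$ defined coordinatewise by $(\theta_t\cK)_{x,y}(s):=\cK_{x,y}(t+s)-\cK_{x,y}(t)$, together with the $\s$-algebra $\cG_t$ generated by the restrictions $\cK_{x,y}|_{[0,t]}$. Two facts about $\bbP$ will be used: $\theta_t\cK$ has the same law as $\cK$, and $\theta_t\cK$ is independent of $\cG_t$; both follow from the independent increments property of the independent family $\{N_{x,y}\}$. I then plan to establish the cocycle identity
\[
\eta_{t+s}^\s[\cK]=\eta_s^{\,\eta_t^\s[\cK]}[\theta_t\cK]\qquad \text{for all }s,t\ge 0,\ \s\in\{0,1\}^S,
\]
valid on a $\bbP$-full measure set. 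Granted this, fix $A\in\cF$ and $B\in\cF_t$, and note that $\mathds{1}_B(\eta_\cdot^\s[\cK])$ is $\cG_t$-measurable while $\theta_t\cK$ is independent of $\cG_t$; conditioning on $\cG_t$ and using that $\theta_t\cK\stackrel{d}{=}\cK$,
\[
\bbP^\s(\eta_{t+\cdot}\in A,\,B)=\bbE\bigl[\mathds{1}_B(\eta_\cdot^\s[\cK])\,\bbP^{\eta_t^\s[\cK]}(A)\bigr]=\bbE^\s\bigl[\mathds{1}_B\,\bbP^{\eta_t}(A)\bigr],
\]
which is exactly the Markov property once measurability of $\zeta\mapsto \bbP^\zeta(A)$ (clause (ii)) is in hand.

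The main obstacle will be the cocycle identity, because the stirring algorithm runs \emph{backward} from time $t+s$ rather than forward: one chases the largest jump time $\le t+s$ attached to $x$ in the family $\cK_{x,\cdot}$, then iterates at the partner site with strictly smaller times. I would argue that, starting from $x$ and processing only those jump times that lie in $(t,t+s]$, the algorithm visits exactly the vertices it would visit when run on $\theta_t\cK$ in time $s$ (since the jumps of $\theta_t\cK$ in $[0,s]$ correspond bijectively to those of $\cK$ in $(t,t+s]$, preserving their temporal order). Once the backward trace crosses level $t$, the remaining iterations use only jumps in $[0,t]$ and coincide with the algorithm on $\cK$ starting from the intermediate site $y:=X_s^x[\theta_t\cK]$ at time $t$; hence $X_{t+s}^x[\cK]=X_t^{\,y}[\cK]$, which gives $\eta_{t+s}^\s[\cK](x)=\s(X_t^{y}[\cK])=\eta_t^\s[\cK](y)=\eta_s^{\eta_t^\s[\cK]}[\theta_t\cK](x)$. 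Making this concatenation fully rigorous on the good set $\G_*$ — in particular ensuring that the ``last hitting time before $t+s$'' at each intermediate vertex is indeed realized either entirely in $(t,t+s]$ or entirely in $[0,t]$ with probability one — is the part that demands care.
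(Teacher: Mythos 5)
Your proposal is correct, and for clauses (i) and (iii) it is essentially the paper's own argument: the paper also proves the Markov property via the cocycle identity $\eta^\s_{t+s}[\cK]=\eta^{\xi}_s[\theta_t\cK]$, $\xi=\eta^\s_t[\cK]$ (identity \eqref{andreino1}, obtained from the backward-chasing construction exactly as you sketch, via $X^x_{t+s}[\cK]=X^{y}_t[\cK]$ with $y=X^x_s[\theta_t\cK]$), combined with the independence of $\cK_{[0,t]}$ and $\theta_t\cK$ and the equality in law of $\theta_t\cK$ and $\cK$; your worry about rigor here is at the same level of detail as the paper, which also justifies the concatenation only briefly. The only genuine difference is clause (ii): you establish joint measurability of $(\s,\cK)\mapsto\eta^\s_\cdot[\cK]$ (via the countable decomposition $\s(X^x_t[\cK])=\sum_y\s(y)\mathds{1}\{X^x_t[\cK]=y\}$, whose $\cK$-measurability is really Lemma \ref{mes0} rather than Lemma \ref{pablito}) and then apply Fubini--Tonelli, which handles all $A\in\cF$ in one stroke; the paper instead shows that $\s\mapsto\bbP^\s(A)$ is \emph{continuous} for cylinder events $A$ (using the continuity in $\s$ from Lemma \ref{messi} and dominated convergence) and upgrades to all of $\cF$ by Dynkin's $\pi$--$\l$ theorem. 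Your route is a bit more direct and avoids the $\pi$--$\l$ bookkeeping; the paper's route yields the extra qualitative information of continuity in $\s$ on cylinder events, in the same spirit as the Feller property proved in Proposition \ref{feller_SSEP}. Both are sound, so there is no gap to repair.
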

  For the proof of the above proposition see Section \ref{avatar}.

   \subsection{Markov semigroup and infinitesimal generator}   
  We write $C(\{0,1\}^S)$ for  the space of real continuous functions on $\{0,1\}^S$  endowed with the uniform norm.
  \begin{Proposition}[Feller property] \label{feller_SSEP}
Given $f\in C(\{0,1\}^S)$ and given $t\geq 0$, the map $S(t)f:\{0,1\}^S \to \bbR$ defined as $\big(S(t) f\big) (\s):= \bbE\left[ f\big( \eta^\s_t [\cK] \big)\right]
=\int d\bbP^\s(\eta_\cdot) f(\eta_t)$
 belongs to $C(\{0,1\}^S)$. In particular, the SSEP with conductances $c_{x,y}$ is a Feller process.
\end{Proposition}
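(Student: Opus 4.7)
The central observation is that for each fixed realization $\cK\in \G_*$ and each $x\in S$, the site $X_t^x[\cK]\in S$ depends only on $\cK$ (and on $t,x$), \emph{not} on $\s$. Consequently
\[
\eta^\s_t[\cK](x)=\s\bigl(X_t^x[\cK]\bigr),
\]
so, for fixed $\cK$ and $x$, the map $\s\mapsto \eta^\s_t[\cK](x)$ is simply evaluation of $\s$ at a fixed site, hence continuous on $\{0,1\}^S$ with the product topology.

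\emph{Step 1: reduction to local functions.} Since $\{0,1\}^S$ is compact, Stone--Weierstrass implies that the algebra of local (cylinder) functions is uniformly dense in $C(\{0,1\}^S)$ (it contains the constants and the coordinate maps $\eta\mapsto \eta(x)$ which separate points). The operator $f\mapsto S(t)f$ is a contraction in supremum norm because $\eta^\s_t[\cK]\in\{0,1\}^S$ and $\bbP$ is a probability measure. Since uniform limits of continuous functions are continuous, it suffices to check that $S(t)f\in C(\{0,1\}^S)$ when $f$ is local.

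\emph{Step 2: continuity of $S(t)f$ for local $f$.} Let $f$ depend only on coordinates in a finite set $F=\{x_1,\dots,x_k\}\subset S$. Then
\[
f\bigl(\eta^\s_t[\cK]\bigr)=g\bigl(\s(X_t^{x_1}[\cK]),\dots,\s(X_t^{x_k}[\cK])\bigr)
\]
for a suitable $g:\{0,1\}^k\to\bbR$. If $\s_n\to\s$ in $\{0,1\}^S$, then for each fixed $y\in S$ we have $\s_n(y)=\s(y)$ for all $n$ large enough, so for every $\cK\in\G_*$ one has $f(\eta^{\s_n}_t[\cK])\to f(\eta^\s_t[\cK])$. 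Boundedness of $f$ and dominated convergence give $S(t)f(\s_n)\to S(t)f(\s)$; since $\{0,1\}^S$ is metrizable, sequential continuity yields continuity. Together with Step 1 this proves the first assertion of the proposition.

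\emph{Step 3: Feller property.} The semigroup property $S(t+s)=S(t)S(s)$ is the Markov property in Proposition~\ref{costruzione_SSEP}(iii). It remains to verify the uniform strong continuity at $t=0$. For local $f$ with dependence set $F$, if no Poisson point of $N_{x,y}(\cdot)$ with $x\in F$, $y\in S$, falls in $(0,t]$, then the stirring algorithm yields $X_t^x[\cK]=x$ for every $x\in F$, hence $f(\eta^\s_t[\cK])=f(\s)$. A union bound over $x\in F$ gives
\[
\bbP\bigl(\exists x\in F:X_t^x[\cK]\neq x\bigr)\leq \sum_{x\in F}\bigl(1-e^{-c_x t}\bigr),
\]
and therefore, uniformly in $\s$,
\[
\bigl|S(t)f(\s)-f(\s)\bigr|\leq 2\|f\|_\infty \sum_{x\in F}\bigl(1-e^{-c_x t}\bigr)\xrightarrow[t\downarrow 0]{}0.
\]
So $\|S(t)f-f\|_\infty\to 0$ for local $f$; the density and contraction argument of Step~1 extends this to all $f\in C(\{0,1\}^S)$.

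\emph{Main obstacle.} No step is deep: the heart is the $\s$--independence of $\cK\mapsto X^x_t[\cK]$, which decouples the initial configuration from the graphical randomness. The only delicate point is Step~3: one must establish strong continuity at $0$ \emph{uniformly} in $\s$ before invoking density, which is why one works with local $f$ and exploits that the relevant "affected-site" probability does not depend on $\s$.
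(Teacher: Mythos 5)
Your proof is correct, and its core — the $\s$-independence of $X^x_t[\cK]$, giving continuity of $\s\mapsto\eta^\s_t[\cK]$ for fixed $\cK\in\G_*$, followed by dominated convergence — is exactly the paper's argument (Lemma \ref{messi} together with Section \ref{linus}); the detour through local functions in Steps 1--2 is harmless but unnecessary, since the pointwise-in-$\cK$ convergence plus dominated convergence works directly for any $f\in C(\{0,1\}^S)$. Your Step 3 (strong continuity at $t=0$, uniform in $\s$) goes beyond what Proposition \ref{feller_SSEP} states and corresponds to the remark the paper makes immediately after it.
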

For the proof of the above proposition see Section \ref{linus}.

\smallskip

Due to the Markov property in Proposition~\ref{costruzione_SSEP}, $(S(t))_{t\geq 0}$ is a   semigroup on $\{0,1\}^S$. Moreover, by 
using dominated convergence and that  $t\mapsto \eta^\s_t[\cK]$ is  right-continuous for $\cK\in \G_*$ (cf. Lemma \ref{pablito}), it is simple to check that $(S(t))_{t\geq 0}$ is a strongly continuous semigroup.
 Its infinitesimal generator $\cL$  is then the Markov generator $\cL$  of the SSEP with conductances $c_{x,y}$. We recall that $\cL$  has domain
\[
\cD(\cL ):=\big\{ f \in C(\{0,1\}^S)\,:\, \frac{ S(t) f- f }{t} \text{ has limit in }C(\{0,1\})^S\text{ as } t \da 0\big\}
\]
and is defined as $\cL f = \lim _{t\da 0} \frac{ S(t) f- f }{t}$ where the above limit is in $C(\{0,1\}^S)$.
  \begin{Proposition}[Infinitesimal generator \rot{on local functions}] \label{inf_SSEP}
 Local functions \rot{belong} to the domain $\cD(\cL)$. Moreover, for any local function $f$,   we have 
 \be\label{mammaE} 
 \cL f(\eta) = \sum_{x\in S} \sum_{y\in S}c_{x,y}  \eta(x) \bigl( 1- \eta(y)\bigr) \left[ f( \eta ^{x,y})- f(\eta)\right]\,,\;\; \eta \in \{0,1\}^{S}
 \en
and 
 \be\label{mahmood}
\cL f (\eta) = \sum _{\{x,y\}\in \cE_S} c_{x,y} \bigl[ f(\eta^{x,y})-f(\eta) \bigr]\,,\;\; \rot{\eta \in \{0,1\}^{S}} \,.
\en 
 The sum in the r.h.s. of \eqref{mammaE} and \eqref{mahmood} are absolutely convergent \rot{series of functions in $C(\{0,1\}^S)$}.
  \end{Proposition}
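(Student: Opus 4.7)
The plan is to derive the Dynkin-type identity
\[
\bbE^\sigma[f(\eta_t)] - f(\sigma) \;=\; \int_0^t S(s)(\cL f)(\sigma)\, ds
\]
directly from the graphical construction (with $\cL f$ defined by the right-hand side of \eqref{mahmood}) and then to let $t\downarrow 0$ using the strong continuity of $(S(s))_{s\geq 0}$ noted just above the proposition. I fix $f$ local with finite support $\Lambda\subset S$. Because $f(\eta^{x,y})-f(\eta)=0$ whenever $\{x,y\}\cap\Lambda=\emptyset$, condition (C1) gives
\[
\sum_{\{x,y\}\in\cE_S}c_{x,y}\,|f(\eta^{x,y})-f(\eta)|\;\leq\; 2\|f\|_\infty\sum_{x\in\Lambda}c_x\;<\;+\infty,
\]
so both sums in \eqref{mammaE} and \eqref{mahmood} converge absolutely. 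Their equality reduces to a case check on $(\eta(x),\eta(y))$: the terms vanish when $\eta(x)=\eta(y)$, and when $\eta(x)\neq\eta(y)$ exactly one ordered pair in \eqref{mammaE} contributes $c_{x,y}[f(\eta^{x,y})-f(\eta)]$, matching the unordered contribution. The same bound, applied to finite truncations of the sum, realises $\cL f$ as a uniform limit of local continuous functions, so $\cL f\in C(\{0,1\}^S)$.

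For the Dynkin identity, I set $\cE_\Lambda:=\{\{x,y\}\in\cE_S:\{x,y\}\cap\Lambda\neq\emptyset\}$; the superposition of the $N_{x,y}$ over $\cE_\Lambda$ has total rate at most $\sum_{x\in\Lambda}c_x<+\infty$, so a.s.\ only finitely many such jumps occur in $[0,t]$. Since a swap at an edge outside $\cE_\Lambda$ leaves $\eta_\cdot|_\Lambda$ unchanged, the path $s\mapsto \eta_s^\sigma[\cK]|_\Lambda$ is c\`adl\`ag and piecewise constant, with jumps precisely at those finitely many Poisson times and with value $(\eta_{s^-}^\sigma)^{x,y}$ on $\Lambda$ at the jump of $N_{x,y}$. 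Telescoping gives the pathwise identity
\[
f(\eta_t^\sigma)-f(\sigma)\;=\;\sum_{\{x,y\}\in\cE_\Lambda}\int_0^t\bigl[f((\eta_{s^-}^\sigma)^{x,y})-f(\eta_{s^-}^\sigma)\bigr]\,dN_{x,y}(s),
\]
with only finitely many nonzero terms. The integrand is predictable, so the compensator formula $\bbE[\int_0^t H_{s^-}\,dN_{x,y}(s)]=c_{x,y}\int_0^t\bbE[H_s]\,ds$ applies termwise; exchanging sum and expectation via the uniform bound $2\|f\|_\infty c_{x,y}t$ and Fubini yields the claimed identity.

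Dividing by $t$ and invoking strong continuity of $(S(s))$ applied to $\cL f\in C(\{0,1\}^S)$ gives $t^{-1}(S(t)f-f)\to \cL f$ uniformly as $t\downarrow 0$, so $f\in\cD(\cL)$ with $\cL f$ as in \eqref{mahmood} (and hence also \eqref{mammaE}). I expect the main obstacle to be the pathwise telescoping step: it rests on the compatibility, for $\cK\in\G_*$, between the backward algorithm defining $X^x_t[\cK]$ and the forward ``swap at each Poisson time of $N_{x,y}$'' rule for $\eta_\cdot^\sigma[\cK]$, i.e.\ that this process is c\`adl\`ag with jumps exactly at the times predicted above and that each jump is a swap of the two coordinates. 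This structural compatibility should be available from the construction of $\G_*$ and the properties of $\eta_\cdot^\sigma[\cK]$ recorded in Lemma \ref{pablito} and Section \ref{sec_proof_SSEP}.
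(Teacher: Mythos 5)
Your proposal is correct in substance but follows a genuinely different route from the paper. The paper proves Proposition \ref{inf_SSEP} by a direct small-$t$ expansion: it isolates the event that at most one Poisson mark falls on the edges of $\cE(A)$ touching the support of $f$, shows via the estimates \eqref{duino} and \eqref{circolo} that everything else (including the delicate event that the outside endpoint $y_0$ carries a second mark, which would spoil the swap picture in the backward algorithm) has probability $o(t)$ uniformly in $\sigma$, and reads off $S(t)f-f=t\hat\cL f+o(t)$ directly. You instead derive the exact Dynkin identity $S(t)f(\sigma)-f(\sigma)=\int_0^t S(s)(\hat\cL f)(\sigma)\,ds$ by a pathwise telescoping over the finitely many marks hitting $\cE_\Lambda$, the compensator formula for the Poisson integrals, and then conclude by strong continuity of the semigroup (which the paper asserts just before the proposition, so there is no circularity). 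Your route buys an exact integral identity and avoids the $o(t)$ bookkeeping --- in particular the event controlled by \eqref{circolo} is absorbed automatically, since the telescoping uses $\eta_{s^-}$ at the outside endpoint $y$ rather than $\sigma(y)$; the paper's route buys elementariness, needing no martingale/predictability machinery and no forward-in-time description of the process. The one step you defer --- that for $\cK\in\G_*$ the path built from the backward algorithm evolves forward by ``swap at each mark'', i.e.\ $\eta_s=(\eta_{s^-})^{x,y}$ at a jump time $s$ of $N_{x,y}$ and $\eta_\cdot(z)$ is constant between marks incident to $z$ --- is true but is nowhere stated explicitly in the paper, so you would have to prove it: it follows in a few lines from Definition \ref{rumore} and properties (ii)--(iii) of $\G$ (at a mark time $s$ of $\{x,y\}$ one has $X^x_s[\cK]=\lim_{u\uparrow s}X^y_u[\cK]$, $X^y_s[\cK]=\lim_{u\uparrow s}X^x_u[\cK]$, and $X^z_s[\cK]=\lim_{u\uparrow s}X^z_u[\cK]$ for $z\notin\{x,y\}$, in the spirit of Lemma \ref{peratoons}). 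With that lemma supplied, your argument is complete and yields \eqref{mahmood}, hence \eqref{mammaE} by the case check on $(\eta(x),\eta(y))$, with the absolute convergence exactly as you state.
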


The configuration   $\eta^{x,y}$ is  obtained from $\eta$ by exchanging the occupation variables at $x$ and $y$, i.e.
\be\label{furia}
\eta^{x,y}(z)=\begin{cases}
\eta(y) & \text{ if } z=x\,,\\
\eta(x) & \text{ if } z=y\,,\\
\eta(z) & \text{ otherwise}\,.
\end{cases}
\en
\rot{Moreover}, we recall  that  a function $f:\{0,1\}^S\to \bbR$ is called \emph{local} if,  for some finite $A\subset S$, $f(\eta)$ is determined  by $(\eta_x)_{x\in A}$ (note that any local function is also continuous on $\{0,1\}^S$). \rot{In what follows, we will denote by $\cC$ the set of local functions, which is dense in $C(\{0,1\}^S)$.}

The proof of Proposition \ref{inf_SSEP}, given in Section \ref{manfred},  
has several similarities with then one in \cite[Appendix~B]{F_SEP}, where another graphical construction is used.

\rot{As in \cite{L1}, given $f\in C(\{0,1\}^S)$, we set 
\be\label{nero}
\D_f (x) := \sup\big\{ |f(\eta)- f(\xi)| : \eta,\xi\in \{0,1\}^S \text{ with } \eta(y)=\xi(y) \; \forall y\in S\setminus\{x\}\big\}\,.
\en
We also define 
\be\label{bianco}
\vertiii{f}:=\sum_{x\in S}  \D_f(x) \;\; \text{ and }\;\; \vertiii{f}_*:=
\sum_{x\in S} c_x \D_f(x) \,.
\en
Then Proposition \ref{inf_SSEP} can be extended to a larger class of functions by approximation. Indeed we have:
\begin{Proposition}[\rot{Infinitesimal generator on further good functions}] \label{eastpak}
Let $f\in C(\{0,1\}^S)$ satisfy $\vertiii{f}<+\infty$ and $\vertiii{f}_*<+\infty$. 
Then $f\in \cD(\cL)$ and 
$
\cL f(\eta)=  \sum _{\{x,y\}\in \cE_S} c_{x,y} \bigl[ f(\eta^{x,y})-f(\eta) \bigr]$,
where the r.h.s. 
 is an absolutely convergent series of functions in $C(\{0,1\}^S)$.
 \end{Proposition}
The proof of the above proposition is given in Section \ref{dim_eastpak}. Note that if $f$ is a local function, then $\D_f(x)=0$  except for a  finite set of elements $x$. In particular, local functions satisfy  both $\vertiii{f}<+\infty$ and $\vertiii{f}_*<+\infty$.}

\smallskip

\rot{For the next result  we recall that a set  $\cA$ is a core of $\cL$ if  $\cA\subset \cD(\cL)$ and the graph  of $\cL$ in $C(\{0,1\}^S)\times C(\{0,1\}^S)$ is the closure of the set $\{(f, \cL f )\,:\, f\in \cA\}$.   We denote by  $(X_t)_{t\geq 0}$ the continuous--time random walk on $S$ with jump probability rates $c_{x,y}$ (which is well defined by Assumption SSEP) and we denote by  $ E_x[\cdot]$   the expectation referred to the random walk $(X_t)_{t\geq 0}$ starting at $x$. Moreover,  we set $\bbQ_{+}:=\bbR_+\cap \bbQ$ and we recall that $c_x:=\sum_{y\in S} c_{x,y}$.
\begin{Proposition}[Core for $\cL$] \label{volpino} Suppose that 
\be \label{condo}
E_x[c_{X_t} ] <+\infty \qquad \forall t\in \bbQ_{+}\,, \; \forall x \in S\,.
\en
Then the family $\cC$ of local functions is a core for $\cL$.
\end{Proposition}
The proof of the above proposition is given in Section \ref{tana_volpino}.}

\begin{Remark}\label{silenziato}
\rot{Trivially, by combining Proposition  \ref{eastpak} and Proposition \ref{volpino}, we get that the set $\{f\in C(\{0,1\}^S)\,:\,\vertiii{f}<+\infty\,, \;\vertiii{f}_*<+\infty\}$ is a core for $\cL$ under condition \eqref{condo}.}
\end{Remark}

\rot{Our attention to deal in \eqref{condo} with $t\in \bbQ_+$ and not with $t\in \bbR_+$ is motivated by the applications to random walks in random environment (see Section \ref{SSEP_environment}). Indeed, dealing with the countable set $\bbQ_+$, \eqref{condo} is valid for almost any environment if, fixed $t\in \bbQ_+$, for almost any environment  it holds $E_x[c_{X_t} ] <+\infty$ for all $x\in S$.}

\rot{We note that, due to the symmetry $c_{x,y}=c_{y,x}$, condition \eqref{albero} assuring the validity of the analytic construction of the SSEP in \cite{L1} and in particular of \cite[Theorem~3.9]{L1} reads 
\be\label{cedro}
\sup_{x\in S} c_x <+\infty\,.
\en
When  \eqref{cedro} is satisfied,  \cite[Theorem~3.9]{L1} provides also a core for the generator of  SSEP, which is given by $\{ f\in C( \{0,1\}^S)\,:\, \vertiii{f}<+\infty\}$. It  is then standard to derive from this result that  $\cC$   is a core for $\cL$ (see e.g.  Remark \ref{batteria1000} in Section \ref{dim_eastpak} and use that the graph of $\cL$ is closed since  $\cL$ is  a Markov generator \cite[Chapter~1]{L1}).  All the above results from \cite{L1} are indeed included in ours since, under \eqref{cedro},  Assumption SSEP is fulfilled,  condition \eqref{condo} is automatically  satisfied and $\vertiii{f}_*<+\infty$ whenever $\vertiii{f}<+\infty$.}


\subsection{Duality \rosso{with the rw $(X_t)_{t\geq 0}$}}\label{sec_duality}   Due to  its relevance for the study of hydrodynamics and fluctuations of the  empirical field for SSEPs in random environment (see e.g.~\cite{CF,F_SEP,GJ}),  
we focus here  on  duality at the level of the density field. Apart from Lemma \ref{sfinimento}, the notions and results presented below are discussed in \cite[Sections~6 and 8]{F_SEP} (the proofs in \cite{F_SEP} can be easily adapted to our notation, since  it is enough to take $\e=1$ there and  to replace $\hat\o$ and $C_{\rm loc} (\e \hat \o)$  there by $S$ and $C_c(S)$ respectively).
 We denote by  $C_c(S)$ the set  of functions $f: S\to \bbR$ which are zero outside a finite subset of $S$. Since $S$ has the discrete topology, $C_c(S)$ corresponds to the set of real  functions with compact support.
We write $\mathfrak{n}$ for  the counting measure on $S$ and we introduce the set $\cD$ given by 
\[
\cD:=\big\{ f\in L^2(\mathfrak{n})\,:\, \sum_{x\in S}\sum_{y\in S} c_{x,y} \bigl( f(y)-f(x)\bigr) ^2 <+\infty\big\}\,.
\]
We then consider the bilinear form $\cE$ with domain $\cD$ given by 
\[
\cE (f,g):= \frac{1}{2}\sum_{x\in S}\sum_{y\in S} c_{x,y} \big( f(y)-f(x)\big) \big(g(y)-g(x)\big)\,, \qquad f,g\in \cD.
\]
On $\cD$ we introduce the norm  \rot{$\|f\|_\cD$ with $\|f\|^2 _{\cD}:=\|f\|^2_{L^2(\mathfrak{n})}+ \cE(f,f)$}. One can easily derive from 
  Condition (C1) that $C_c(S) \subset \cD$. We then  call   $\cD_*$ 
 the closure of $C_c(S)$ in $\cD$ w.r.t. the norm $\|\cdot \|_{\cD}$ (see \cite[Section~6]{F_SEP}). By \rot{arguing as in} \cite[Example~1.2.5]{FOT} the bilinear form $\cE$ restricted to  $\cD_*$ is a regular Dirichlet form. As a consequence, there  exists  a unique nonpositive self-adjoint operator $\bbL$ in $L^2(\mathfrak{n})$ such that $\cD_*$ equals the domain of $\sqrt{ - \bbL}$ and $\cE(f,f)= \|\sqrt{ - \bbL} f\|_{L^2(\mathfrak{n})}$ for any $f\in \cD_*$ (cf.~\cite[Theorem~1.3.1]{FOT}). By \cite[Lemma~1.3.2 and Exercise~4.4.1]{FOT} $\bbL$ is the infinitesimal generator of the strongly continuous Markov semigroup $(P_t)_{t\geq 0}$ on $L^2(\mathfrak{n})$ associated to the random walk $(X_t)_{t\geq 0}$ on $S$ 
with jump probability rates $c_{x,y}$ (defined in terms of holding times and jump probabilities as after Assumption SSEP). In particular, we have $P_t f (x) := E_x\bigl[ f(X_t)\bigr]$ where $E_x$ is  the expectation referred to the random walk starting \rot{at} $x$. In what follows we write $\cD(\bbL)$ for the domain of the operator $\bbL$. 
 
\begin{Definition}
Given a function $f:S\to \bbR$ such that $\sum _{x\in S}c_x |f(x)|<+\infty$, we define $\tilde{\bbL} f:S\to \bbR$ as $\tilde{\bbL} f(x):= \sum _{y\in S} c_{x,y} \big( f(y)-f(x) \big)$.
\end{Definition}
Note that 
 the series in the r.h.s. of the  definition of  $\tilde{\bbL} f(x)$ is absolutely convergent  by the assumption on $f$ and the symmetry of the $c_{x,y}$'s (indeed 
 $\sum _{y\in S} c_{x,y} |f(x)|= c_x |f(x)|<+\infty$, while  $\sum _{y\in S} c_{x,y} |f(y)|$ is finite since 
 we have $\sum _{x\in S}\sum _{y\in S} c_{x,y} |f(y)|=\sum _{x\in S}\sum _{y\in S} c_{y,x} |f(y)|=\sum _{y\in S} c_{y} |f(y)|$). In particular, if $f\in C_c(S)$ then $\tilde \bbL f $ is well defined.
 
 Although not necessary to prove the hydrodynamic limit of SSEPs on point processes, the following result has its own interest since it makes the generator  $\bbL$ explicit on local functions:
\begin{Lemma}\label{sfinimento} If $f\in C_c(S)$, then $f\in \cD(\rot{\bbL})$ and $\bbL f =\tilde\bbL f $.
\end{Lemma}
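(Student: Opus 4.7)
The plan is to invoke the standard characterization of the domain of a Dirichlet-form generator: for the regular Dirichlet form $(\cE, \cD_*)$ with associated nonpositive self-adjoint operator $\bbL$ on $L^2(\mathfrak{n})$, one has $f \in \cD(\bbL)$ with $\bbL f = h$ if and only if $f \in \cD_*$ and
\[
\cE(f, g) = -\langle h, g\rangle_{L^2(\mathfrak{n})} \quad \text{for every } g \in \cD_*.
\]
I would apply this with $h = \tilde \bbL f$.

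First, since $C_c(S) \subset \cD_*$ (as noted just before the statement), $f \in \cD_*$ is immediate. Second, I must verify $\tilde \bbL f \in L^2(\mathfrak{n})$. Let $A$ be the finite support of $f$. For $x \in A$ only finitely many terms contribute, each bounded by $2 c_x \|f\|_\infty$. For $x \notin A$ one has $\tilde \bbL f(x) = \sum_{y \in A} c_{x,y} f(y)$, so Cauchy--Schwarz yields $|\tilde \bbL f(x)|^2 \leq |A| \|f\|_\infty^2 \sum_{y \in A} c_{x,y}^2$. Summing over $x$ and using the trivial bound $c_{x,y} \leq c_y$ together with $\sum_x c_{x,y} = c_y$ gives
\[
\sum_{x \notin A} |\tilde \bbL f(x)|^2 \;\leq\; |A|\, \|f\|_\infty^2 \sum_{y \in A} c_y^2 \;<\; +\infty,
\]
the last finiteness coming from condition (C1).

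Third, I would establish the identity $\cE(f, g) = -\langle \tilde \bbL f, g\rangle$ first for $g \in C_c(S)$. Expanding $(f(y)-f(x))(g(y)-g(x))$, exploiting the symmetry $c_{x,y}=c_{y,x}$, and relabeling dummy indices produces the identity by an elementary summation by parts; with both $f$ and $g$ of finite support the resulting double sums reduce via $\sum_x c_{x,y}= c_y$ to finite expressions, so the rearrangements are legitimate. Finally, I would extend the identity from $g \in C_c(S)$ to arbitrary $g \in \cD_*$ by density: given $g_n \in C_c(S)$ with $g_n \to g$ in $\|\cdot\|_{\cD}$, Cauchy--Schwarz in $\cE$ gives $\cE(f, g_n) \to \cE(f, g)$, while $\|\cdot\|_{L^2(\mathfrak{n})} \leq \|\cdot\|_{\cD}$ and $\tilde \bbL f \in L^2(\mathfrak{n})$ give $\langle \tilde \bbL f, g_n\rangle \to \langle \tilde \bbL f, g\rangle$. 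The characterization above then yields $f \in \cD(\bbL)$ and $\bbL f = \tilde \bbL f$.

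The main technical point is the $L^2$-estimate for $\tilde \bbL f$: one must control the tail contributions from sites $x$ outside $A$ that may still jump \emph{into} $A$. The inequality $c_{x,y} \leq c_y$ (an immediate consequence of the defining sum for $c_y$), combined with the fact that the outer sum over $y$ is restricted to the \emph{finite} set $A$, makes the tail summable without any quantitative hypothesis beyond (C1).
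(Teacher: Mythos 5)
Your proof is correct, but it follows a genuinely different route from the paper. You work entirely on the Dirichlet-form side: you check $\tilde\bbL f\in L^2(\mathfrak{n})$ (your tail estimate $\sum_{x\notin A}|\tilde \bbL f(x)|^2\leq |A|\|f\|_\infty^2\sum_{y\in A}c_y^2$ via $c_{x,y}\leq c_y$ and the symmetry $\sum_x c_{x,y}=c_y$ is fine and needs only (C1)), you verify the summation-by-parts identity $\cE(f,g)=-\langle \tilde\bbL f,g\rangle_{L^2(\mathfrak{n})}$ for $g\in C_c(S)$ (legitimate, since after expanding the product all four double sums are absolutely convergent), extend it to $g\in\cD_*$ by density and Cauchy--Schwarz, and then invoke the standard weak characterization of the generator of a closed symmetric form (FOT, Corollary~1.3.1) to conclude $f\in\cD(\bbL)$ and $\bbL f=\tilde\bbL f$. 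The paper instead argues probabilistically with the semigroup: it shows directly that $\lim_{t\da 0}\big\| t^{-1}(P_tf-f)-\tilde\bbL f\big\|_{L^2(\mathfrak{n})}=0$, reducing to Kronecker functions $f=\delta_{x_0}$, splitting $P_tf(x)=E_x[f(X_t)]$ according to the number of jumps in $[0,t]$, and exploiting the symmetry of the walk plus dominated convergence to control the two-or-more-jump term. Your variational argument avoids any analysis of transition probabilities and jump counts, at the price of leaning on the abstract form-operator correspondence; the paper's computation is more hands-on and establishes the strong $L^2$ convergence of the difference quotient explicitly, staying close to the probabilistic representation of $P_t$. Both use only Assumption SSEP (in fact only (C1) enters the estimates), so the two proofs are of comparable strength.
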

The above lemma is proved in Section \ref{lollo}.

We can now  describe the duality between the SSEP with conductances $c_{x,y}$ and the random walk with probability rates $c_{x,y}$ at the level of the density field (i.e.~the empirical measure).  To this aim we recall that given $\eta \in \{0,1\}^S$ the empirical measure $\pi[\eta]$ is the atomic measure on $S$ given by 
\be \label{def_empirical_meas} 
\pi[\eta] := \sum _{x\in S} \eta(x) \d_x \,.
\en
Given a real function $f$ on $S$ integrable w.r.t.~$\pi[\eta]$, we will write $\pi[\eta](f)$ or simply $\pi (f)$ for the sum $\sum _{x\in S}f(x) \eta(x)$  of $f$ w.r.t.~$\pi[\eta]$. 
Trivially, if $\sum_{x\in S} |f(x)|<+\infty$ as in the lemma below, then $f$ is  integrable w.r.t. $\pi[\eta]$  for all $\eta \in \{0,1\}^S$

Recall that $\cL : \cD(\cL) \to C(\{0,1\}^S)$ is the infinitesimal generator of the semigroup $( S(t))_{t\geq 0}$ on  $C(\{0,1\}^S)$ associated to the SSEP with conductances $c_{x,y}$.
\begin{Lemma}[see \eccolo] \label{dualita} Suppose that $f:S\to \bbR$ satisfies
\be\label{condizionale}
\sum_{x\in S} |f(x)|<+\infty\qquad  \text{ and } \qquad \sum_{x\in S} c_x |f(x) |<+\infty \,.
\en
Then the map $\{0,1\}^S \ni \eta \mapsto \pi[\eta](f)\in \bbR $ is continuous and indeed 
$\sum_{x\in S} f(x) \eta(x)$ is an absolutely convergent series in  $C(\{0,1\}^S)$. This map belongs to the domain $\cD(\cL)$ of $\cL$ and 
\be\label{danza}
\cL \big( \pi(f) \big) = \sum_{x\in S} \eta (x) \tilde{\bbL} f(x) \,,
\en
the r.h.s. of \eqref{danza} being an absolutely convergent series in $C(\{0,1\}^S)$.

If in addition to \eqref{condizionale} we have $f\in \cD(\bbL)\subset L^2(\mathfrak{n})$ (for example, if $f\in C_c(S)$), then $\bbL f=\tilde \bbL f$ and in particular we have the duality relation 
\be\label{danza_bis}
\cL \big( \pi(f)\big)= \sum_{x\in S}\eta(x) \bbL f(x)\,.
\en
\end{Lemma}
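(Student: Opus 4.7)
The plan is to reduce everything to the case of local functions via a natural truncation, then invoke closedness of $\cL$. First, for the continuity claim, since $|\eta(x)| \leq 1$ uniformly in $\eta$ and $\sum_x |f(x)|<+\infty$, the series $\sum_x f(x)\eta(x)$ is absolutely convergent in $C(\{0,1\}^S)$ (the partial sums are uniformly Cauchy). Each partial sum is continuous on $\{0,1\}^S$ (with the product topology), so the limit $\eta\mapsto \pi[\eta](f)$ is continuous as well.

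Next I fix an increasing sequence of finite sets $A_n\uparrow S$ and set $f_n:=f\,\mathds{1}_{A_n}\in C_c(S)$. Then $\pi(f_n)$ is a local function on $\{0,1\}^S$, so by Proposition~\ref{inf_SSEP} it lies in $\cD(\cL)$. Using \eqref{mahmood} and the identity
\[
\pi(f_n)(\eta^{x,y})-\pi(f_n)(\eta)=\bigl(f_n(y)-f_n(x)\bigr)\bigl(\eta(x)-\eta(y)\bigr),
\]
I rewrite the sum over unordered pairs as a symmetric sum over ordered pairs; using $c_{x,y}=c_{y,x}$ and rearranging (legitimate because only finitely many terms are nonzero) one obtains
\[
\cL\bigl(\pi(f_n)\bigr)(\eta)=\sum_{x\in S}\eta(x)\,\tilde{\bbL}f_n(x).
\]
To pass to the limit in $C(\{0,1\}^S)$ I need two uniform estimates. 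The first, $\|\pi(f_n)-\pi(f)\|_\infty\leq \sum_{x\notin A_n}|f(x)|\to 0$, is immediate. For the second, bound
\[
\sum_{x\in S}\bigl|\tilde{\bbL}(f-f_n)(x)\bigr|\leq \sum_{x}\sum_{y}c_{x,y}|(f-f_n)(y)|+\sum_{x}c_x|(f-f_n)(x)|,
\]
and after interchanging the order of summation and using $c_y=\sum_x c_{x,y}$ both terms are dominated by $2\sum_x c_x|f(x)|<+\infty$. Since $(f-f_n)\to 0$ pointwise, dominated convergence gives $\sum_x|\tilde{\bbL}(f-f_n)(x)|\to 0$, which uniformly in $\eta$ yields convergence of the right-hand side. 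The same bound with $f_n$ replaced by $f$ shows $\sum_x|\tilde{\bbL}f(x)|<+\infty$, so $\sum_x \eta(x)\tilde{\bbL}f(x)$ is an absolutely convergent series in $C(\{0,1\}^S)$. Because $\cL$ is the generator of a strongly continuous contraction semigroup, it is closed, so $\pi(f)\in\cD(\cL)$ and \eqref{danza} holds.

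For the final clause, assuming in addition $f\in\cD(\bbL)$, I want $\bbL f=\tilde{\bbL}f$ as elements of $L^2(\mathfrak{n})$. For every $g\in C_c(S)$, Lemma~\ref{sfinimento} gives $\bbL g=\tilde{\bbL}g$, so by self-adjointness
\[
\langle \bbL f,g\rangle=\langle f,\bbL g\rangle=\langle f,\tilde{\bbL}g\rangle=\sum_{x,y} c_{x,y}f(x)\bigl(g(y)-g(x)\bigr);
\]
the double sum is absolutely convergent thanks to $\sum_x c_x|f(x)|<+\infty$ together with $g\in C_c(S)$, so I may interchange summation order and, using $c_{x,y}=c_{y,x}$, rewrite the expression as $\langle \tilde{\bbL}f,g\rangle$. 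Since $C_c(S)$ is dense in $L^2(\mathfrak{n})$ and both $\bbL f$ and $\tilde{\bbL}f$ lie in $L^2(\mathfrak{n})$ (the latter because $\sum_x|\tilde{\bbL}f(x)|<+\infty$ and it is also bounded, so it is in $\ell^1\cap\ell^\infty\subset\ell^2$), we conclude $\bbL f=\tilde{\bbL}f$ and \eqref{danza_bis} follows from \eqref{danza}.

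The main obstacle is managing the interchange of double sums and the absolute convergence in the closedness step; the essential quantitative input is the bound $\sum_x|\tilde{\bbL}f(x)|\leq 2\sum_x c_x|f(x)|$, which is precisely why the second integrability hypothesis in \eqref{condizionale} has been imposed. Once this is secured, closedness of $\cL$ does the rest mechanically, and the $L^2$ identification $\bbL f=\tilde{\bbL}f$ is a routine duality argument against the dense core $C_c(S)$.
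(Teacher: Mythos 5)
Your proof is correct. Note that for this statement the paper supplies no in-text proof: Lemma \ref{dualita} is imported from \cite[Lemma~8.2]{F_SEP} (with the adaptations indicated at the start of Section \ref{sec_duality}), so there is nothing to compare line by line; what you give is a self-contained derivation from ingredients proved in this paper, namely Proposition \ref{inf_SSEP} for local functions, strong continuity of $(S(t))_{t\geq 0}$ (hence closedness of $\cL$), and Lemma \ref{sfinimento} plus self-adjointness of $\bbL$ for the last clause. Every step checks out: the truncation $f_n=f\,\mathds{1}_{A_n}$, the exchange identity $\pi(f_n)(\eta^{x,y})-\pi(f_n)(\eta)=\bigl(f_n(y)-f_n(x)\bigr)\bigl(\eta(x)-\eta(y)\bigr)$, the uniform bound $\sum_{x\in S}\bigl|\tilde\bbL (f-f_n)(x)\bigr|\leq 2\sum_{x\notin A_n}c_x|f(x)|\to 0$, and closedness of the generator yield \eqref{danza}, while the pairing against $C_c(S)$ yields $\bbL f=\tilde\bbL f$ and hence \eqref{danza_bis}. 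Two minor remarks. First, in the rearrangement of \eqref{mahmood} for the local function $\pi(f_n)$ your parenthetical justification is off: in general infinitely many terms are nonzero (all pairs $\{x,y\}$ meeting $A_n$, with $y$ ranging over $S$); the correct justification is absolute convergence, which follows from $\sum_{\{x,y\}\in\cE(A_n)}c_{x,y}\leq\sum_{x\in A_n}c_x<+\infty$ as in \eqref{stimetta} together with $\|f_n\|_\infty<+\infty$ and $|\eta|\leq 1$ --- a harmless fix, since you use exactly this bound elsewhere. Second, in the final step you can bypass the discussion of $L^2(\mathfrak{n})$ membership of $\tilde\bbL f$ altogether: testing the identity $\langle \bbL f,g\rangle=\langle \tilde\bbL f,g\rangle$ with the Kronecker functions $g=\mathds{1}(\cdot=x_0)\in C_c(S)$ already gives $\bbL f(x_0)=\tilde\bbL f(x_0)$ for every $x_0\in S$. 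Neither remark affects the validity of your argument.
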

Identities of the form \eqref{danza_bis} are relevant to study hydrodynamics and fluctuations of the density field since they are associated to Dynkin's martingales. \rot{We point out that  above we have considered the Markov semigroup of the random walk in $L^2(\mathfrak{n})$ since particularly convenient for the stochastic homogenization analysis as in \cite{Fhom1}. Of course, one could have  considered as well  the Markov semigroup on other functional spaces, as the space $C_0(S)$ of continuous functions on $S$ vanishing at infinity endowed with the uniform norm}. 

\section{Applications to SSEPs in a random environment}\label{SSEP_environment}
We discuss some applications of the results presented in the previous section to SSEPs in a random environment.  \rot{We consider for simplicity $S=\bbZ^d$, but the arguments and results  we will present  can be  extended to more general graphs (see e.g. \cite{F_SSEP})}.

\smallskip

\rot{We take  $S=\bbZ^d$.}  We denote by $\bbE_d$ the set of undirected edges of the lattice $\bbZ^d$. We take  $\O:= \bbR_+^{\bbE_d}$, endowed with the product topology and the Borel $\s$--algebra \rosso{$\cB (\O)$}. We let  $\cP$ be a probability measure on $(\O,\rosso{\cB(\O)})$.   Given $\o\in \O$ we write $\o_{x,y}$ instead of $\o_{\{x,y\}}$.  Given $z\in \bbZ^d$ we write $\t_z:\O\to \O$ for the shift $(\t_z\o)_{ x,y } = \o _{x-z,y-z}$. 

 Given the generic environment $\o \in \O$, we set $c_{x,y}(\o):= \o_{x,y}$ if $\{x,y\} \in \bbE_d$ and  $c_{x,y}(\o):=0$ otherwise. \rosso{We write $(X_t^\o)_{t\geq 0}$ for the continuous-time random walk in the environment $\o$ with jump rates $c_{x,y}(\o)$ and with state space $\bbZ^d\cup \{\partial\}$, $\partial $ being a cemetery state (in case of explosion).}
\rosso{The properties stated in  Section \ref{sec_SSEP} hold $\cP$--a.s. if} Conditions (C1) and (C2) are satisfied $\cP$--a.s.. \rosso{Trivially} (C1) is always satisfied. For (C2) (i.e.~the random walk \rosso{$(X_t^\o)_{t\geq 0}$} a.s. does not explode) we have the following criterion:
\begin{Proposition}\label{visual}  Condition (C2)   is satisfied $\cP$--a.s.~in the following three cases:
\begin{itemize}
\item[(i)] $d=1$ and $\cP$ is stationary w.r.t.~shifts;
\item[(ii)]  $\cP$ is stationary  w.r.t.~shifts and $\int \cP(d \o) c_0(\o) <+\infty$;
\item[(iii)] $d\geq 2$ and under $\cP$ the coordinates $\o_{x,y}$, \rosso{as $\{x,y\}$ varies in $\bbE_d$}, are i.i.d.
\end{itemize}
\end{Proposition}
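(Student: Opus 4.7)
The plan is to verify Condition (C2) in each of the three scenarios; Condition (C1) is automatic, since the nearest-neighbour restriction makes $c_x(\omega)=\sum_{y\sim x}\omega_{x,y}$ a finite sum of non-negative reals. By stationarity it suffices in all three cases to prove non-explosion of the walk started at the origin.

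For case (i) I would exploit the fact that in $d=1$ the walk can reach $+\infty$ only by visiting every non-negative integer in turn. Letting $T_n$ denote the first holding time at site $n$ after its first visit, conditional on $\omega$ the $T_n$'s are independent with $T_n\sim\mathrm{Exp}(c_n(\omega))$. Explosion to $+\infty$ forces $\sum_{n\geq 0}T_n<\infty$, which (by the three-series theorem applied conditionally on $\omega$, using $E[\min(T_n,1)]\asymp \min\{1,1/c_n(\omega)\}$) is equivalent to $\sum_{n\geq 0}\min\{1,1/c_n(\omega)\}<\infty$. Since $c_0<\infty$ $\cP$-a.s.\ the summand is strictly positive a.s., and Birkhoff's theorem for the shift-stationary sequence $(\min\{1,1/c_n\})_{n\in\bbZ}$ forces the partial sums to diverge, contradicting the assumption of explosion. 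Escape to $-\infty$ is handled symmetrically.

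For case (ii) I would work with the environment viewed from the particle $\omega_t:=\tau_{X_t}\omega$. Symmetry of the rates makes counting measure on $\bbZ^d$ reversible for the walk, so $c_0(\omega)\cP(d\omega)$ is an invariant measure for the embedded jump chain on $\Omega$; the hypothesis $\int c_0\,d\cP<\infty$ normalises it to a probability measure. Applying the ergodic theorem (with the ergodic decomposition of $\cP$ if necessary) to $1/c_0$ yields $\tfrac{1}{n}\sum_{k=0}^{n-1}1/c_{X_k}(\omega)\to 1/E_\cP[c_0]>0$ $\cP$-a.s., hence $\sum_k 1/c_{X_k}=\infty$; the standard criterion for sums of independent exponentials then gives $\sum_k T_k=\infty$, ruling out explosion. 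A minor subtlety is transferring the conclusion from an a.e.\ starting environment back to a deterministic starting site, which is handled by stationarity.

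For case (iii) I would run a Harris-type percolation argument. Fix $K>0$ and call a vertex $x$ \emph{$K$-good} if $\omega_{x,y}\leq K$ for every neighbour $y$; the field of $K$-good vertices is finitely dependent (range one) and, by the i.i.d.\ assumption, has marginal probability tending to $1$ as $K\to\infty$. For $K$ large the Liggett--Schonmann--Stacey domination theorem gives stochastic domination by supercritical Bernoulli site percolation on $\bbZ^d$, and since $d\geq 2$ this produces $\cP$-a.s.\ arbitrarily large contours of $K$-good vertices enclosing the origin. A crossing of a $K$-good vertex costs at least an $\mathrm{Exp}(2dK)$ holding time, and a Peierls-type count on the nested contours shows that escape from increasingly large regions requires diverging time. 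The main obstacle is this last step: correctly formulating the percolation comparison with the right finite-range dependence, and then quantifying how the nested good contours slow the walk down — this is where the argument mirrors the use of Assumption SEP elsewhere in the paper. By contrast, cases (i) and (ii) reduce transparently to Birkhoff's theorem and to ergodic theory for the environment-from-the-particle chain.
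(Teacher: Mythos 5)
Items (i) and (ii) of your proposal are correct, and (ii) is essentially the paper's own argument: the environment seen from the particle, reversibility of $c_0(\omega)\,\cP(d\omega)$ for the embedded jump chain, the ergodic theorem giving divergence of $\sum_k 1/c_{X_k}(\omega)$, and then divergence of the total time via the conditionally independent exponential holding times. Two small touch-ups: the almost sure Ces\`aro limit is the conditional expectation of $1/c_0$ under the tilted measure $\cQ(d\omega)=\cZ^{-1}c_0(\omega)\cP(d\omega)$ given the shift-invariant $\sigma$-algebra (possibly $+\infty$), not $1/\int c_0\,d\cP$; only its strict positivity is used, so nothing breaks, but you should also dispose separately of the degenerate cases $\int c_0\,d\cP=0$ and $c_0(\omega)=0$ (absorbing start), and note that $\cQ$ and $\cP$ are mutually absolutely continuous only on $\{c_0>0\}$, which is what transfers the $\cQ$-a.s.\ statement back to $\cP$. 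For (i) your route (quenched Kolmogorov three-series criterion $\sum_n\min\{1,1/c_n(\omega)\}$ plus Birkhoff applied to the bounded positive observable $\min\{1,1/c_0\}$) is correct; the paper uses a lighter device: explosion forces $c_x(\omega)\to\infty$ along $+\infty$ or $-\infty$, i.e.\ the stationary random set $\{x: c_x(\omega)\le M\}$ would have a finite maximum with positive probability, which is impossible since the disjoint events $\{\max=k\}$ all have the same probability. Both arguments are sound.

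Item (iii) is where you genuinely depart from the paper, and where the one real gap sits. The paper proves nothing percolative itself: it splits according to $\cP(\omega_{x,y}>0)$ versus $p_c(d)$ and cites \cite{ABDH} and \cite[Lemma 2.5]{BD} for the supercritical and critical cases, while in the subcritical case the walk is confined to a finite cluster of positive conductances. Your renormalization/contour argument is an attractive self-contained alternative (it would in fact cover any finitely dependent law and any $d\ge1$), but the step you yourself flag is exactly the missing one: the a.s.\ existence of infinitely many pairwise disjoint all-$K$-good cutsets separating the origin from infinity is standard only in $d=2$ (open circuits); for $d\ge3$ ``arbitrarily large contours'' needs an argument. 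It can be completed elementarily: by Liggett--Schonmann--Stacey domination the bad (non-$K$-good) field is dominated by a highly subcritical Bernoulli field, so all bad $*$-clusters are a.s.\ finite; then for each $n$ take the union of the box $[-n,n]^d$ with the (finitely many, finite) bad $*$-clusters meeting it, fill in the finite holes of its complement, and check that the outer vertex boundary of the resulting finite set consists only of good vertices and is a cutset; choosing $n$ along a sparse sequence gives disjoint cutsets. Note also that the ``Peierls-type count'' is not where the time estimate comes from: once the disjoint good cutsets exist, an exploding walk must visit infinitely many distinct sites (all holding rates are a.s.\ finite), hence hits each cutset, and the first-visit holding times there are independent exponentials with rates at most $2dK$, whose sum diverges a.s. So the skeleton of your (iii) is right, but as written the key percolation step is asserted rather than proved, and in any case it is a different proof from the paper's citation-based one.
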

The proof of Item (ii) \rosso{will be}  an extension of the arguments used in  \cite[Lemma~4.3]{demasi} since we are not assuming here that $\cP$ is ergodic and that $\cP(\o _{x,y}>0)=1$ for all $\{x,y\}\in \bbE_d$.  Item (iii) \rosso{will follow} from the results of  \rosso{\cite{BD}}.

\begin{proof} We start with Item (i). Since $\cP$ is stationary, it is enough to restrict to the random walk starting at the origin.
Let $\o$ be an environment for which the random walk $X^\o_t$ starting at the origin has  explosion \rosso{in path space} with positive probability. Since the sum of infinite independent exponential variables with parameters upper bounded by a finite constant diverges a.s., if the random walk explodes with positive probability (in path space)
then the parameter $c_x(\o)$ has to diverge for $x\to +\infty$ or for $x\to -\infty$.
%
%

Now, given $M$, consider the random set $\hat \o_M:=\{x\in \bbZ\,:\, c_x(\o)\leq M\}$. 
For each $k\in \bbN$ consider the event $A_{M,k}\in \rosso{\cB(\O)}$ defined as $A_{M,k}:=\{\o \in \O\,: \max \hat \o_M=k\}$. By the stationarity of $\cP$, $\cP(A_{M,k})$ does not depend on $k$. On the other hand, the events $A_{M,k}$, $k\in \bbN$, are disjoint. Since $1\geq \cP( \cup_k A_{M,k})=\sum_{k}\cP( A_{M,k}) $ we conclude that $\cP( A_{M,k})=0$ for each $M$ and $k$.
\rosso{By reasoning similarly for $\min \hat \o_M$, we conclude that $\cP$--a.s. for any $M\in \bbN$  the set $\hat\o_M$ is empty or it is  unbounded from the left and from the right.}
This implies that $\cP$--a.s. $\lim_{x\to +\infty} c_x(\o)=+\infty$ is violated and, similarly, $\cP$--a.s. $\lim_{x\to -\infty} c_x(\o)=+\infty$ is violated. 
This allows to conclude that \rosso{for $\cP$--a.a.~$\o$ the random walk $X_t^\o$ a.s. does not explode}.
%
%
%
%
%

We now move to Item (ii). At cost to enlarge the probability space by marking 
edges by i.i.d. non degenerate random variables, independent from the rest, we can assume that 
\be
 \label{fiordalisi}
 \cP\big( \o\in \O\,:\, \t_z \o \not=\t_{z'}\o  \text{ for all } z\not=z' \text{ in } \bbZ^d\big)=1\,\rot{.}
 \en
 We let  $\cZ:=\int \cP(d \o) c_0(\o) <+\infty$. If $\cZ=0$ then $c_0=0$ $\cP$--a.s. and, by stationarity, $c_x=0$ for all $x\in \bbZ^d$ $\cP$--a.s.. In this case, $\cP$--a.s.,  all sites are absorbing for the random walk and therefore (C2) is trivially satisfied. From now on we restrict to the  case $\cZ>0$.   We define $\cQ$ as the probability measure on $\O$ given by  $\cQ(d \o)=\cZ^{-1} c_0 ( \o) \cP(d\o)$. 
 Due to  the stationarity of $\cP$, it is enough to consider the random walk starting at the origin \rosso{and show that a.s. there is no explosion}.

 We introduce the discrete-time Markov chain on $\O$ with jump probability rates $r(\o, \o')$ defined as  follows: if $c_0(\o)=0$ we set 
$r(\o,\o'):= \d_{\o,\o'}$,  if $c_0(\o)>0$ we set  
  $r(\o,\o'):= c_{0,x}(\o)/c_0(\o)$ when   $\o'=\t_x \o $ for some $x\in \bbZ^d$ and $r(\o,\o')=0$ otherwise. We write $(\bar \o_n)_{n\geq 0}$ for the above Markov chain when starting at $\o$. Note that, due to \eqref{fiordalisi}, $(\bar \o_n)_{n\geq 0}$ is well defined for $\cP$--a.a.~$\o$ and therefore for $\cQ$--a.a.~$\o$.
 Since $c_0(\o) r(\o,\o')=c_0(\o') r(\o',\o)$, the probability measure $\cQ$ is reversible for the above Markov chain. We write $P_{\cQ}$ for the law on the path space  $\O^{\bbN}$ of the Markov chain $(\bar \o_n)_{n\geq 0}$ with initial distribution $\cQ$. 
 Since $\cQ$ is reversible,  $P_{\cQ}$  is invariant w.r.t. shifts.
 
 We introduce now 
  a sequence  $(T_n)_{n\geq 0}$ of i.i.d. exponential times of mean one
  defined on another probability space $(\Theta, P)$. We can take $\Theta:= \bbR_+^{\bbN}$  with the 
  product topology, endowed with the Borel $\s$--algebra, and we can take 
  $T_k(\theta):=\theta_k$ for all $\theta \in \Theta$. Then $P_\cQ\otimes P$ is stationary w.r.t.~\rosso{time-}shifts when thought of  as probability measure on the path space $(\O\otimes \bbR_+)^{\bbN}$. We write $\cI$ for the $\s$--algebra of shift-invariant subsets of $(\O\otimes \bbR_+)^{\bbN}$.
   By the ergodic theorem    the limit 
  $ 
  \lim _{n \to _\infty} \frac{1}{n} \sum_{k=0}^{n-1} T_k(\theta)/c_0 (\bar \o _k)  
  $  exists $P_{\cQ}\otimes P$--a.s. and equals the expectation of $T_0(\theta)/c_0(\bar \o_0)$ w.r.t. 
  $P_{\cQ} \otimes P$ conditioned to $\cI$, which a random variable with values in $(0,+\infty]$. As a consequence, $P_{\cQ} \otimes P(\cW)=1$
  where  $\cW:=\{\sum_{k=0}^\infty T_k(\theta)/c_0 (\bar \o _k) =+\infty\}$. 
  We observe that $\cQ$  is concentrated  on  $\{\o:c_0(\o)>0\}$, and  $\cQ$ and $\cP$ are mutually absolutely continuous when restricted to this set. On the other hand, if $c_0(\o)=0$, we trivially have that $\rosso{\cP}_{\d_{\o}}\otimes P(\cW)=1$ \rosso{(the definition of $\cP_{\d_\o}$ is similar to the one of $\cP_\cQ$)}. We conclude that 
  $P_{\cP} \otimes P\,(\cW)=1$.  
  
  Finally we can build the continuous-time random walk $X^\o_\cdot=(X^\o_t)_{t\geq 0}$ with conductances $c_{x,y}(\o)$ and starting at the origin by  defining its jump process (i.e. the sequence of states visited by   $X^\o_\cdot $ in chronological order) as an  additive functional of the Markov chain 
  $(\bar \o_n)_{n\geq 0}$ (here we use again \eqref{fiordalisi}) and by  using the exponential times $T_k(\theta)/c_0 (\bar \o _k) $ as waiting times.
  The construction is standard: when the environment is $\o$, $X^\o_\cdot$ starts at the origin and remains there until time  $T_0(\theta)/c_0 (\bar \o _0) =T_0(\theta)/c_0 ( \o) $, afterwards it jumps to the site  $x\in \bbZ^d$ such that $\bar \o_1 =\t_x \o$ and remains there for a time   $T_1(\theta)/c_0 (\bar \o _1)$ and so on. By \eqref{fiordalisi}, for $\cP$--a.a.~$\o$ the above construction is well defined (e.g.~the above $x$ is univocally determined).
  The event $\cW$ then corresponds to non-explosion of the trajectory. Since $P_{\cP} \otimes P\,(\cW)=1$, we conclude that for $\cP$--a.a. $\o$ condition (C2) is fulfilled. 
  

We conclude with Item (iii). 
We   define $\G(\o)$ as the graph with edges $\{x,y\}\in \bbE_d$ with $\o_{x,y}>0$ and with vertexes given by the points belonging to the above edges. 
We write $V(\o)$ and  $E(\o)$ for the vertex  set and the edge set of  $\G(\o)$, respectively. For $e\in E(\o)$ we set $t(e):= \min\{1, \o_e^{-1/2}\}$ and, given $x,y\in V(\o)$, we set $\tilde d(x,y) :=\inf\{\sum _{i=1}^n t(e_i)\}$, where the infimum is taken over \rot{all} paths $(e_1,e_2,\dots, e_n)$ from $x$ to $y$ in $\G(\o)$. Then by  \cite[Lemma 2.5]{BD} the r.w. $X_t^\o$ a.s. does not explode if for any connected component $\cC$ of $\G(\o)$ there exists $x\in \cC$ and $\theta>0$ such that
\be\label{bingo12} \sum_{y\in \cC} \exp\{ -\theta \tilde d (x,y)\}<+\infty\,.
  \en
  We now define $\o'_e:= \max\{1, \o_e\}$ and     $t'(e):= \min\{1, (\o'_e)^{-1/2}\}$ for any $e\in \bbE_d$. Given 
  $x,y\in \bbZ^d$, we set $\tilde d'(x,y): =\inf\{\sum _{i=1}^n t'(e_i)\}$, where the infimum is taken over \rot{all} paths $(e_1,e_2,\dots, e_n)$ from $x$ to $y$ in the lattice  $\bbZ^d$.  For $e\in E(\o)$ we have $t(e)\geq t'(e)$ since  $  \o'_e \geq \o_e $. Since in addition paths in $\G(\o)$ are also paths in the lattice $\bbZ^d$, we get that
  $\tilde{d}'(x,y) \leq d (x,y)$ for any $x,y\in V(\o)$. In particular, 
  given $x\in \cC$ as in \eqref{bingo12}, the bound in \eqref{bingo12} is true if it holds
    \be\label{bingo13}
  \sum_{y\in \bbZ^d} \exp\{ -\theta \tilde{d}' (x,y)\}<+\infty\,.
  \en
  We observe that under $\cP$ the new conductances $\o'_{x,y}$, with $\{x,y\}\in \bbE_d$,  are i.i.d and lower bounded by 1. This is exactly the context of \cite{BD}. Then the bound \eqref{bingo13} holds for $\cP$--a.a.~$\o$   due to Theorem 4.3 and Lemma 2.11 in \cite{BD}.  We conclude that for $\cP$--a.a. $\o$ condition (C2) is fulfilled. 
\end{proof}

The \rot{result} in Proposition~\ref{visual}--(ii) is extended in \rot{\cite{F_SSEP}} to prove the a.s. non-explosion (i.e.~Condition  (C2)) for $\cP$--a.a.~$\o$ for a very large class of random walks on random graphs in $\bbR^d$, with also a random  vertex set (given by a simple point process).

\smallskip

\rot{We now give a simple criterion (based on Proposition \ref{volpino}) to verify that the set of local functions $\cC$ is a core for the generator $\cL=\cL(\o)$ for $\cP$--a.a.~environments $\o$: }

\begin{Proposition}\label{rev_core}
\rot{Suppose that $\cP$ is stationary w.r.t. shifts and $\int \cP(d \o) c_0 (\o)<+\infty$. Then for $\cP$-a.a. $\o$ condition \eqref{condo} is satisfied. In particular, for $\cP$--a.a.~$\o$  the family $\cC$ of local functions  is a core for the generator $\cL=\cL(\o)$.}
\end{Proposition}
\rot{We point out that the SSEP considered in Proposition \ref{rev_core} is well defined due to Proposition \ref{visual}--(ii).}
\begin{proof}
\rot{Given the environment $\o\in \O$, consider the process \emph{environment viewed from the particle} $(\bar \o_t)_{t\geq 0}$ , i.e.~$\bar \o_t:= \t_{X_t^\o}\o$. By the definition of the translations $\t_z:\O\to\O$ we have  $c_{X^\o_t}(\o)= c_0 ( \t_{X^\o_t}\o ) = c_0 (\bar \o_t)$. Moreover, because of the symmetry of the jump rates, we have that $\cP$ is a reversible (and therefore invariant) probability measure for the process. As a consequence,  we have
\be\label{gola}
\int \cP (d\o) {\rm E}_\o \big[ c_0 ( \bar \o_t )\big]= \int \cP (d\o)   c_0 (\o  )\,,
\en
where ${\rm E}_\o$ is the expectation w.r.t. the  process \emph{environment viewed from the particle} starting at the environment $\o$. By our assumption, \eqref{gola} is finite. This implies that, for any $t\geq 0$, ${\rm E}_\o \big[ c_0 ( \bar \o_t )\big]<+\infty$ for $\cP$--a.a.~$\o$. Hence, there exists $\cA\subset \O$ measurable with $\cP(\cA)=1$ such that, for all $\o \in \cA$, ${\rm E}_\o \big[ c_0 ( \bar \o_t )\big]<+\infty$ for any $t\in \bbQ_+$. This means that, for all $\o \in \cA$, $ E_0\big[c_{X^\o_t}(\o)\big]<+\infty$ for all $t\in \bbQ_+$. 
We set  $\cA_*:=\cap _{z\in \bbZ^d} \t_z \cA$. By the stationarity of $\cP$ we have $\cP(\t_z \cA)=\cP(\cA)=1$ and therefore $\cP(\cA_*)=1$. Moreover,  for all $\o\in\cA_*$ and $x\in \bbZ^d$, $\t_x\o\in \cA$ and therefore it holds  $ E_0\big[c_{X^{\t_x \o}_t}(\t_x \o)\big]<+\infty$ for all $t\in \bbQ_+$. Using that $ E_0 \big[c_{X^{\t_x \o}_t}(\t_x \o)\big]= E_x\big[c_{X^{ \o}_t}( \o)\big]<+\infty$, we get that \eqref{condo} is satisfied for all $\o \in \cA_*$.  Proposition \ref{volpino} allows to conclude. }
\end{proof}

\rot{The proof of Proposition \ref{rev_core} 
can be easily adapted to more general SSEP  in a random environment $\o$, with symmetric jump rates  $c_{x,y}(\o)$ and on  a random graph $\cG(\o)$ as in \cite{Fhom1,F_SSEP}. More precisely, when considering models as in \cite{Fhom1} with symmetric just rates, the condition assuring \eqref{condo} becomes  
 $\int \cP_0(d \o) c_0 (\o)<+\infty$ where $\cP_0$ is the Palm distribution associated to $\cP$ (of course, one does not need to require all the assumptions in \cite{Fhom1}).
 One can apply the above observation for example to  the 
   random walk  on the infinite cluster of a  supercritical  percolation on $\bbZ^d$ also with random conductances (assuring anyway stationarity). In this case $\cP_0$ would be  the probability measure  $\cP$ conditioned to the event that $0$ is in the infinite cluster (see \cite[Eq.~(12)]{Fhom1}). }

\section{Graphical construction and  Markov semigroup of SEP}\label{sec_SEP}
We discuss here the graphical construction and the Markov semigroup of the simple exclusion process (SEP)  on the  countable set $S$, when the jump rates $c_{x,y}$ are not necessarily symmetric. 


 We denote by  $\cE^o_S$  the family of ordered pairs of elements of $S$, i.e.~
\[\cE^o_S:=\{(x,y)\,:\, x\not = y,\; x,y\in S\}\,.
\]
To each pair $(x,y)\in \cE^o_S$ we associate a  number $c_{x,y}\in [0,+\infty)$. It is convenient to set \[\rot{c_{x,x}:=0 \qquad \forall x\in S}\,.
\] \rosso{Note that $c_{x,y}$ is not assumed to be symmetric in $x$, $y$.}

 We  consider the product space $D_\bbN^{\cE^o_S}$ endowed  with the product topology. This topology is induced by a
  metric $d(\cdot, \cdot)$, defined similarly to \eqref{pioggina}:
 $
  d( \cK, \cK'):= \sum_{i \geq 1}\sum_{j\geq 1} 2^{-(i+j)} \min \big\{1, \mathfrak{d}( \cK_{s_i,s_j}, \cK_{s_i,s_j}')\big\}$. 
We write $\cK= ( \cK_{x,y} )_{(x,y)\in \cE^o_S}$ for a generic element of  $ D_\bbN^{\cE^o_S}$.

\begin{Definition}[Probability measure $\bbP$] \label{def_taste} We  associate to each pair  $(x,y)\in \cE^o_S $    a Poisson process $( N_{x,y}(t))_{t\geq 0}$ with intensity $c_{x,y} $ \rosso{and with $N_{x,y}(0)=0$},  such that  the $N_{x,y}(\cdot)$'s  are independent processes when varying the pair $(x,y)$ in $\cE^o_S$. We define $\bbP$ as the law on $D_{\bbN}^{\cE^o_S}$ of the   random object $( N_{x,y}(\cdot ) )_{(x,y) \in \cE^o_S}$ \rot{and we denote by $\bbE[\cdot]$ the expectation associated to $\bbP$}.
\end{Definition}

The graphical construction of the SEP presented below 
 is based on Harris' percolation argument \cite{Du10,H1}. To justify this construction   we need a  percolation-type assumption. To this aim we define
 \[
 \cK_{x,y}^s(t) :=\cK_{x,y}(t) + \cK_{y,x}(t)\qquad \text{ for } t\geq 0\,,\; \{x,y\}\in \cE_S\,.
\]
\rosso{$\cE_S$ above is defined as in \eqref{logo_lego}}.
 Note the symmetry relation $\cK_{x,y}^s(t) =\cK_{y,x}^s (t)$ and that $\cK^s:=(\cK^s_{x,y})_{\{x,y\}\in \cE_S}$ belongs to $D_{\bbN}^{\cE_S}$.  When $\cK$ is sampled with distribution $\bbP$, 
 $\cK^s$  is a collection of independent processes and in particular   $\cK^s_{x,y}$ is a Poisson process with parameter 
 \[
   c^s_{x,y}:= c_{x,y}+ c_{y,x}\,.
   \]We set $c^s_{x,x}:=0$ for all $x\in S$.

 From now on we make the following assumption, \rot{in force throughout all this section}:
 
 \smallskip
  \noindent {\bf Assumption SEP}.
  \emph{There exists $t_0>0$ such that  
 for $\bbP$--a.a. $\cK\in D_{\bbN}^{\cE_S^o} $ 
 the undirected  graph $\cG _{t_0}(\cK)$ with vertex set $S$ and edge set 
 $\{ \{x,y\} \in \cE_S \,:\,  \cK^s_{x,y}(t_0)  > \rot{\cK^s_{x,y}(0)}\big\}$  has only connected components with finite  cardinality.}
 
 \smallskip
 \rot{We point out that for $\bbP$--a.a.~$\cK$ it holds $\cK_{x,y}(0)=0$ for all $(x,y)\in \cE_S^o$ and therefore $\cK_{x,y}^s(0)=0$ for all $\{x,y\}\in \cE_S$. Hence, Assumption SEP remains unchanged if we replace  $\cK^s_{x,y}(0)$ by zero there. On the other hand, the above choice is more suited for the construction of similar graphs for further time intervals as in Lemma \ref{lemma_prop_r} below.}
  Due to the loss of memory of the Poisson point process,  \rot{Assumption SEP}
 \rot{implies} the following property (we omit the proof since standard):
%

 \begin{Lemma}\label{lemma_prop_r} For $\bbP$--a.a. $\cK\in D_{\bbN}^{\cE_S^o}$ the following holds:
   $\forall r\in \bbN$ 
 the 
 undirected  graph $\cG ^r_{t_0}(\cK)$ with vertex set $S$ and edge set  $\{ \{x,y\}
 \in \cE_S 
 \,:\,  \cK^s_{x,y}((r+1) t_0)> \cK^s_{x,y} (r t_0) \}$ has only connected components with finite 
cardinality.
 \end{Lemma}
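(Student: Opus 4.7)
The plan is to exploit the stationary independent increments of the Poisson processes $N_{x,y}$ (hence of $\cK^s_{x,y}$) to reduce the claim for every $r$ to the case $r=0$, which is precisely Assumption SEP.

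First I would define, for each $r\in\bbN$, the shifted path $\cK^{(r)}\in D_\bbN^{\cE_S^o}$ by
\[
\cK^{(r)}_{x,y}(t) := \cK_{x,y}(rt_0+t)-\cK_{x,y}(rt_0), \qquad t\geq 0,\ (x,y)\in \cE_S^o.
\]
By the Markov property of Poisson processes and the independence among distinct unordered pairs, each family $\bigl(\cK^{(r)}_{x,y}(\cdot)\bigr)_{(x,y)\in\cE_S^o}$ consists of independent Poisson processes with the same intensities $c_{x,y}$ as the original family, so the law of $\cK^{(r)}$ under $\bbP$ equals $\bbP$ itself. In particular, the symmetrized increment processes $\bigl(\cK^{(r),s}_{x,y}\bigr)_{\{x,y\}\in\cE_S}$ have the same joint law as $\bigl(\cK^s_{x,y}\bigr)_{\{x,y\}\in\cE_S}$.

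Next I would observe that the edge set of $\cG^r_{t_0}(\cK)$ is exactly $\{\{x,y\}\in\cE_S\,:\,\cK^{(r),s}_{x,y}(t_0)>0\}$, so the graph $\cG^r_{t_0}(\cK)$ is obtained from $\cK^{(r)}$ via exactly the same measurable construction that produces $\cG_{t_0}(\cdot)$ from the raw input. Consequently, setting
\[
A := \bigl\{\cK\in D_\bbN^{\cE_S^o}\,:\,\cG_{t_0}(\cK) \text{ has only finite connected components}\bigr\},
\]
(which is a Borel event, since for each finite $F\subset S$ the presence/absence of an edge in $F\times F$ at time $t_0$ is Borel measurable, and the existence of an infinite component can be written as a countable intersection over $n$ of the events ``there is a connected subset of size $\geq n$ containing some fixed enumeration site''), we have $\{\cK:\cG^r_{t_0}(\cK)$ has only finite components$\}=\{\cK^{(r)}\in A\}$. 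Assumption SEP asserts $\bbP(A)=1$, and distributional invariance gives $\bbP(\cK^{(r)}\in A)=\bbP(A)=1$ for every $r$.

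Finally, the intersection over $r\in\bbN$ is a countable intersection of probability-one events, hence has probability one, which is the claim. The only mildly delicate point is the measurability of $A$, which I expect to be the main (but routine) obstacle; everything else is a direct consequence of the stationarity of independent Poisson increments.
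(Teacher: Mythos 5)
Your proof is correct and is exactly the ``loss of memory of the Poisson point process'' argument that the paper invokes when it omits this proof as standard: time-shifted increments of the independent Poisson processes have the same law as the original family, so each event $\{\cG^r_{t_0}(\cK)$ has only finite components$\}$ has probability one by Assumption SEP, and one intersects over the countably many $r$. The measurability point you flag is handled in the paper by Lemma \ref{susine}, along the same lines as your sketch.
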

Trivially, $\cG^r_{t_0} (\cK)=\cG_{t_0} (\cK)$ for $r=0$.
We also  point out that   the properties appearing in Assumption SEP and Lemma \ref{lemma_prop_r} define indeed measurable subsets of $D_\bbN^{\cE^o_S}$:
\begin{Lemma}\label{susine}
 Given $r\in \bbN$ the set $\G_r$ of configurations $\cK\in D_{\bbN}^{\cE_S^o}$ such that   the graph $\cG^r_{t_0} (\cK)$  has only connected components with finite cardinality is a Borel subset of $D_{\bbN}^{\cE_S^o}$.
 \end{Lemma}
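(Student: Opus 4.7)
My approach is to rewrite the property ``$\cG^r_{t_0}(\cK)$ has only finite connected components'' as a countable Boolean combination of events of the form $\{\cK^s_{x,y}((r+1)t_0) - \cK^s_{x,y}(rt_0) = 0\}$, each of which is manifestly Borel.

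First I would establish the building blocks. Since $\bbN$ is separable, the Borel $\s$--algebra of $D_\bbN$ is generated by the coordinate maps $\xi \mapsto \xi(t)$, so for each fixed $t \geq 0$ and each ordered pair $(x,y)\in \cE^o_S$ the map $\cK \mapsto \cK_{x,y}(t)$ is Borel measurable on $D_\bbN^{\cE^o_S}$ (projection composed with a coordinate). Hence, for each unordered pair $\{x,y\}\in \cE_S$,
\[
\cK \;\longmapsto\; \cK^s_{x,y}((r+1)t_0) - \cK^s_{x,y}(rt_0) = \bigl(\cK_{x,y}+\cK_{y,x}\bigr)\bigl((r+1)t_0\bigr) - \bigl(\cK_{x,y}+\cK_{y,x}\bigr)(r t_0)
\]
is Borel, and so the event
\[
E_{\{x,y\}} \;:=\; \bigl\{\cK\in D_\bbN^{\cE^o_S}\,:\, \cK^s_{x,y}((r+1)t_0) > \cK^s_{x,y}(rt_0)\bigr\}
\]
lies in $\cB(D_\bbN^{\cE^o_S})$. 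By definition $E_{\{x,y\}}$ is exactly the event that $\{x,y\}$ is an edge of $\cG^r_{t_0}(\cK)$.

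Next I would use the following elementary graph-theoretic reformulation: the connected component of a vertex $x$ in a graph on $S$ is finite if and only if there exists a finite set $F\subset S$ with $x\in F$ such that no edge of the graph has one endpoint in $F$ and the other in $S\setminus F$. Applied to $\cG^r_{t_0}(\cK)$, this gives
\[
\bigl\{\cK\,:\,\text{component of $x$ in $\cG^r_{t_0}(\cK)$ is finite}\bigr\}
\;=\; \bigcup_{\substack{F\subset S\text{ finite}\\ x\in F}} \bigcap_{\substack{y\in F\\ z\in S\setminus F}} E_{\{y,z\}}^{\,c}.
\]
The union is over the countable family of finite subsets of $S$ containing $x$, and the inner intersection, though indexed by the possibly infinite set $S\setminus F$, is still a countable intersection since $S$ is countable. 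Thus each set on the left is a countable Boolean combination of Borel sets, hence Borel.

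Finally, $\G_r$ is the event that \emph{every} component is finite, so
\[
\G_r \;=\; \bigcap_{x\in S}\;\bigcup_{\substack{F\subset S\text{ finite}\\ x\in F}}\;\bigcap_{\substack{y\in F\\ z\in S\setminus F}} E_{\{y,z\}}^{\,c},
\]
a countable intersection of Borel sets, hence Borel. I do not foresee a genuine obstacle here; the only point that requires a moment's thought is the reformulation of ``finite component'' via a separating finite set, which avoids quantifying over the uncountable collection of all self-avoiding paths and keeps the whole expression within countable set-theoretic operations on measurable events.
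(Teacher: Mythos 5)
Your argument is correct, but it follows a genuinely different route from the paper's. The paper works with the complement of $\G_r$: it writes it as $\cup_{n}A_n$, where $A_n$ is the event that the connected component of $s_n$ in $\cG^r_{t_0}(\cK)$ is infinite, and then identifies $A_n$ with $\cap_{k\geq 2}A_{n,k}$, where $A_{n,k}$ is the event that some self-avoiding path of length at least $k$ starts at $s_n$; each $A_{n,k}$ is a countable union of elementary cylinder events of the form $\{\cK:\cK_{y_i,y_{i+1}}((r+1)t_0)>\cK_{y_i,y_{i+1}}(rt_0)\ \forall i\}$, so measurability follows. You instead express the event itself, using that the component of $x$ is finite if and only if some finite set $F\ni x$ has no edge of $\cG^r_{t_0}(\cK)$ joining $F$ to $S\setminus F$, which makes $\G_r$ a countable Boolean combination of the elementary Borel events $E_{\{y,z\}}$. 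Both decompositions involve only countably many elementary events and both yield the Borel property, but your ``finite isolating set'' characterization is valid for arbitrary graphs, whereas the identification of ``infinite component of $s_n$'' with ``arbitrarily long self-avoiding paths from $s_n$'' is a K\"onig-type statement that requires local finiteness: a configuration $\cK$ producing an infinite star centred at $s_n$ has an infinite component yet no self-avoiding path of length two from the centre, so the paper's identity $A_n=\cap_k A_{n,k}$ can fail at configurations with infinite degrees (irrelevant under $\bbP$, but the lemma is a purely deterministic measurability statement about all of $D_{\bbN}^{\cE^o_S}$). In this respect your argument is slightly more robust, at no cost in length.
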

 \rosso{The proof of the above lemma is trivial and therefore omitted (simply note that $\G_r^c$ corresponds to the fact that, for some site $s_n$, for each integer $k\geq 1$ there exist distinct sites  $y_1, y_2, \dots, y_k $ in $S\setminus \{s_n\}$ such that $\cK^s_{y_i , y_{i+1}}((r+1) t_0)>\cK^s_{y_i , y_{i+1}}(r t_0)$ for all $ i=0,1,\dots, k-1$, where $y_0:=s_n$).}

 \medskip
 Trivially, Assumption SEP  can be reformulated as follows:
  
  \smallskip

 \noindent {\bf Equivalent formulation of Assumption SEP}: \emph{Given $t_0>0$, consider the  random graph with vertex set $S$ obtained by  putting an edge  between $x\not= y$ in $S$   with probability $1-\exp\{- c^s_{x,y}t_0\}$, independently when varying  $\{x,y\}$ among $\cE_S$. Then, for some $t_0>0$, the above random  graph 
 has a.s. only  connected components with finite  cardinality.}

  \begin{Remark}\label{dominio} By stochastic domination, to \rot{check} Assumption SEP one can as well replace  $\{ c^s_{x,y}\,:\, \{x,y\}\in \cE_S\} $
   by any other family  $\{ \bar{c}_{x,y}\,:\, \{x,y\}\in \cE_S\} $ such that $c^s_{x,y}\leq \bar c _{x,y}$ for any $\{x,y\}\in \cE_S$.
  \end{Remark}
 \smallskip

 In the above Assumption SEP we have not required any summability property as Condition (C1) in Assumption SSEP. Indeed, this is not necessary due to the following fact proved in Section \ref{sec_proof_SEP}:
 \begin{Lemma}\label{lemma_somma}Assumption SEP implies for all $x\in S$ that $
 c^s_x:=\sum_{y \in S } c^s_{x,y} <+\infty$,  $ \sum_{y \in S} c_{x,y} <+\infty$, $ \sum_{y \in S} c_{y,x} <+\infty$.
 \end{Lemma}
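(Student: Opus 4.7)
The plan is to argue by contradiction: assume that $c^s_x=\sum_{y\in S} c^s_{x,y}=+\infty$ for some $x\in S$ and show that, with positive probability, the vertex $x$ has infinite degree in $\cG_{t_0}(\cK)$, in contradiction with Assumption SEP.

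Under $\bbP$, the processes $\{\cK^s_{x,y}\,:\,\{x,y\}\in \cE_S\}$ are independent Poisson processes, $\cK^s_{x,y}$ having rate $c^s_{x,y}$. Fixing $x$ and varying $y\in S\setminus\{x\}$, the events
\[
E_y:=\{\cK^s_{x,y}(t_0)>0\}
\]
are therefore independent, with $\bbP(E_y)=1-e^{-c^s_{x,y}t_0}$. The first step is to use the elementary inequality $1-e^{-a}\geq \tfrac12 \min(a,1)$ for $a\geq 0$, which yields $\bbP(E_y)\geq \tfrac12 \min(c^s_{x,y} t_0,1)$.

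Next I would verify that $\sum_y c^s_{x,y}=+\infty$ forces $\sum_y \min(c^s_{x,y}t_0,1)=+\infty$: either infinitely many summands $c^s_{x,y}$ are at least $1/t_0$, in which case the conclusion is immediate, or only finitely many are, and then after discarding these finitely many terms the min coincides with $c^s_{x,y}t_0$ and the remaining sum is a positive multiple of a divergent tail of $\sum_y c^s_{x,y}$. In either case $\sum_y \bbP(E_y)=+\infty$, so the second Borel--Cantelli lemma, applied to the independent family $(E_y)$, gives $\bbP(E_y \text{ for infinitely many }y)=1$. On this full-measure event the vertex $x$ has infinitely many neighbours in $\cG_{t_0}(\cK)$, so its connected component is infinite; this contradicts Assumption SEP and proves $c^s_x<+\infty$ for every $x\in S$.

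The remaining two inequalities are then immediate, since $c^s_{x,y}=c_{x,y}+c_{y,x}\geq \max\{c_{x,y},c_{y,x}\}$ gives
\[
\sum_{y\in S} c_{x,y}\leq \sum_{y\in S} c^s_{x,y}=c^s_x<+\infty,\qquad \sum_{y\in S} c_{y,x}\leq c^s_x<+\infty.
\]
No step is genuinely delicate; the one point requiring a little care is the dichotomy in the second paragraph ensuring $\sum_y \bbP(E_y)=+\infty$, but this is routine once the inequality for $1-e^{-a}$ is in place.
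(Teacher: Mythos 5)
Your proof is correct, and it follows the same overall strategy as the paper: argue by contradiction, reduce to the first bound via $c^s_{x,y}\geq\max\{c_{x,y},c_{y,x}\}$, and show that $c^s_x=+\infty$ would force the vertex $x$ to have a.s. infinite degree in $\cG_{t_0}(\cK)$, contradicting Assumption SEP. The only genuine difference is the probabilistic tool used for the ``a.s. infinite degree'' step. You apply the second Borel--Cantelli lemma to the independent events $E_y=\{\cK^s_{x,y}(t_0)>0\}$, which requires the auxiliary inequality $1-e^{-a}\geq\tfrac12\min(a,1)$ and the dichotomy showing $\sum_y\min(c^s_{x,y}t_0,1)=+\infty$; all of these steps are correct as you state them. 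The paper instead invokes the superposition property: the partial sums $M_n(t)=\sum_{i=2}^n\cK^s_{x,s_i}(t)$ form Poisson processes with parameters $\l_n\to+\infty$, so for every fixed $k$ the monotone events $\{M_n(t)\leq k\}$ have probabilities tending to $0$, whence $x$ a.s. has degree larger than $k$ in $\cG_t(\cK)$ for every $t>0$. The paper's route avoids the small analytic lemma and the case distinction (the divergence of the parameter is fed directly into the Poisson tail), while yours is slightly more elementary in that it only uses the marginal laws of the individual edge variables plus Borel--Cantelli; both yield the contradiction with equal force.
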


Recall the definition of $\G_r$ given in Lemma \ref{susine}.
\begin{Definition}[Set $\G_*$] \label{def_gamma_o}
We define $\G_*$ as the family of  $\cK\in D_{\bbN}^{\cE^o_S}$ such that 
\begin{itemize}
\item[(i)] $\cK\in \cap _{r\in \bbN} \G_r $;
\item[(ii)]
the  sum  $\sum_{y\in S\setminus x} \cK^s_{x,y} (t)=  \sum_{y\in S\setminus x} (\cK_{x,y} (t)+\cK_{y,x}(t) ) $ is finite for all  $x\in \bbN$ and $t\in \bbR_+$;
\item[(iii)]  given any $(x,y)\not=(x',y')$ in $\cE^o_S$ the set of  jump times of $\cK^o_{x,y}$ 
and the set of jump times of $\cK^o_{x',y'}$ 
 are disjoint and moreover all jumps \rot{equal} $+1$;
  \item[(iv)] \rot{$\cK_{x,y}(0)=0$ for all $(x,y)\in \cE_S^o$}.
   \end{itemize}
 \end{Definition}
It is simple to check (also by using  Lemmas  \ref{susine} and  \ref{lemma_somma})  the following:
\begin{Lemma}\label{muset_o}
$\G_*$ is  measurable, i.e.~$\G_*\in \cB(D_{\bbN}^{\cE^o_S}) $, and $\bbP(\G_*)=1$. 
\end{Lemma}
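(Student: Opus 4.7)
The plan is to handle measurability and the full-measure statement separately, and within each of these to treat the three conditions (i)--(iii) one at a time, reducing everything to countable operations on Borel sets / countable unions of $\bbP$--null sets.

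For measurability, condition~(i) defines a Borel set directly: by Lemma~\ref{susine} each $\G_r$ is Borel, and $\cap_{r\in\bbN}\G_r$ is a countable intersection. For condition~(ii), the function $D_\bbN^{\cE_S^o}\ni \cK \mapsto \cK_{x,y}^s(t) \in \bbN$ is measurable because $\cB(D_\bbN)$ is generated by the coordinate maps (see Section~\ref{sec_noto}), hence so is $\cK\mapsto \sum_{y\neq x} \cK_{x,y}^s(t) \in [0,+\infty]$ as an increasing limit of finite sums. The key observation is that each $\cK_{x,y}^s(\cdot)$ is non-decreasing, so $t\mapsto \sum_{y\neq x}\cK_{x,y}^s(t)$ is non-decreasing and the condition ``finite for all $t\in\bbR_+$'' is equivalent to ``finite for all $n\in\bbN$''. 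Thus condition~(ii) cuts out a countable intersection of Borel sets indexed by $(x,n)\in S\times\bbN$. For condition~(iii) I would rewrite it as the intersection over all $n \in \bbN$ of the Borel events ``in the time interval $[0,n]$ no two distinct coordinates $\cK_{x,y}$ and $\cK_{x',y'}$ jump simultaneously and all jumps of every $\cK_{x,y}$ restricted to $[0,n]$ equal $+1$''; the latter are Borel because on each fixed compact time window both properties are determined by countably many coordinate evaluations via the Skorohod topology, and the ordered pairs $(x,y)$ range over a countable set.

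For the probability estimate, condition~(i) has $\bbP$--measure one by Lemma~\ref{lemma_prop_r}. For~(ii), by Lemma~\ref{lemma_somma} we have $c^s_x<+\infty$ for every $x$, so for fixed $x$ and $t$ the random variable $\sum_{y\neq x}\cK_{x,y}^s(t)$ has expectation $c^s_x\, t<+\infty$ under $\bbP$, hence is $\bbP$--a.s.\ finite; taking a countable union over $(x,n)\in S\times\bbN$ and using monotonicity in $t$ yields that~(ii) holds $\bbP$--a.s. For~(iii), two independent Poisson processes almost surely have no common jump time (their joint jump times form a Poisson process of intensity zero, or more concretely the probability of a common jump in $[0,n]$ is zero by Fubini applied to their independent intensity measures), and a Poisson process almost surely has all jumps of size $+1$; a countable union over pairs $(x,y)\neq(x',y')$ in $\cE_S^o$ and over $n\in\bbN$ concludes.

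The only step that is not completely mechanical is the measurability of~(iii); there the right move is to fix a compact time interval $[0,n]$ and exploit that, for each $\cK\in D_\bbN^{\cE_S^o}$, jumps in $[0,n]$ of a coordinate $\cK_{x,y}$ correspond to sites where the left and right coordinate values differ, which can be read off from countably many rational time evaluations up to a standard $\limsup/\liminf$ construction. Once that is in place the remaining pieces are elementary, so no real obstacle is expected.
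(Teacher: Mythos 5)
Your overall strategy is the intended one: the paper leaves this proof to the reader (``simple to check'' using Lemmas \ref{susine} and \ref{lemma_somma}), and the detailed model is the SSEP analogue, Lemma \ref{muset}. Your treatment of condition (i) of Definition \ref{def_gamma_o} via Lemmas \ref{susine} and \ref{lemma_prop_r}, of condition (iii) on compact time windows, and of the probability-one statements (finite expectation via Lemma \ref{lemma_somma}; independent Poisson processes a.s.\ have no common jump times and only $+1$ jumps) all match what is needed.

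There is, however, one step that fails as written: the claim that each coordinate $\cK^s_{x,y}(\cdot)$ of a \emph{generic} element of $D_\bbN^{\cE^o_S}$ is non-decreasing. A c\`adl\`ag $\bbN$-valued path can have downward jumps, and then your reduction of condition (ii) to integer times breaks down. For instance, with $x=s_1$, take $\cK_{x,s_j}(t)=\mathds{1}\bigl(t\in[1/2,\,1/2+1/j)\bigr)$ for $j\geq 2$ and all other coordinates identically zero: then $\sum_{y\neq x}\cK^s_{x,y}(t)$ vanishes at every integer time but is infinite at $t=1/2$, so the event ``finite for all $t\in\bbR_+$'' is strictly smaller than the countable intersection you describe, and the claimed equivalence is false for general $\cK$. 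The fix is exactly the one used in the proof of Lemma \ref{muset}: first intersect with the Borel, full-measure set where condition (iii) holds (which you have already established); on that set every $\cK_{x,y}$ is c\`adl\`ag with only $+1$ jumps, hence non-decreasing, so condition (ii) does reduce to countably many times there, and $\G_*$ is measurable as the intersection of the (i)-set, the (iii)-set and these countably many Borel events. Note that only measurability of the conjunction is needed, not of condition (ii) in isolation. With this reordering your argument is complete and coincides with the paper's intended one.
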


We briefly describe  the graphical construction  of the SEP for $\cK\in \G_*$  under Assumption SEP.  
 
  Given   $\s \in \{0,1\}^{S}$ we  first   define  a    trajectory $(\eta^\s_t[\cK])_{t \geq 0 }$  in $D_{\{0,1\}^{S} }$  starting at $\s$ by an iterative procedure.  We set 
  $\eta^\s_0[ \cK]:=\s$. 
   Suppose  that the   trajectory has been defined up to time $r t_0$, $r\in \bbN$.
   As $\cK \in \G_*$   all connected components of $\cG^r_{t_0}(\cK)$ have finite cardinality. 
    Let $\cC$ be 
    such a connected component and let
 \begin{equation}\label{grovis24}
   \begin{split}
     &\{s_1<s_2< \cdots <s_k\} =\\
  &\bigl \{s \,: \cK_{x,y}(s) = \cK_{x,y}(s-)+1 \text{ for some } x\not=y \text{ in } \cC, \; r t_0  <s \leq (r+1) t_0\bigr\}\,.
   \end{split}
 \end{equation}
The local evolution $\eta^\s _t[ \cK](z) $ with $z \in \cC$ and  $r t_0 < t  \leq (r+1) t_0$ is described as follows. 
 Start with $\eta^\s_{rt_0}[ \cK]$ as configuration at time $r t_0$ in $\cC$. At time $s_1$  move a particle from $x$ to $y\not=x$ with $x,y\in \cC$ if, just before  time $s_1$, it holds: \begin{itemize}
 \item[(i)] site $x$ is occupied and site $y$ is empty;
 \item[(ii)]  $\cK_{x,y}(s_1)= \cK _{x,y}(s_1-)+1$.
 \end{itemize}
Note that, since $\cK\in \G_*$, \rosso{the set \eqref{grovis24} is indeed finite and}  there exists  at most one ordered pair $(x,y)$ \rosso{satisfying (i) and (ii)}. After this first step, 
   repeat the same operation as above  orderly for times $s_2,s_3, \dots , s_k$. Then move to another connected component of  $\cG_{t_0}^r(\cK)$ and repeat the above construction and so on. 
   As 
the connected components are disjoint, the resulting path does not 
depend on the order by which we choose the connected components in the above algorithm (we could \rosso{as well}  proceed simultaneously with all connected components).
   This procedure defines $ (\eta^\s_t[\cK])_{ r t_0< t \leq (r+1) t_0}$.
  Starting with $r=0$ and progressively increasing $r$ by $+1$ we get the trajectory  $ \eta^\s _\cdot[\cK]= (\eta^\s_t[\cK])_{t\geq 0}$.  \smallskip

 The   filtered measurable space  $( D_{\{0,1\}^S }, (\cF_t)_{t\geq 0}, \cF)$ is defined as in Section \ref{ape_maya}. Again  the space $C(\{0,1\}^S)$ of real continuous functions    on $\{0,1\}^S$  is endowed with the uniform topology.
 Given  $\s\in \{0,1\}^S$, we define    $\bbP^{\s}$ as the probability measure on  the above filtered measurable space  given by 
 $\bbP^{\s} (A):= \bbP(\cK\in \G_*\,:\,  \eta^\s_\cdot [\cK]    \in A)$ for all $A\in\cF$.
  By Lemma \ref{pablito_o}   in Section \ref{sec_proof_SEP}  the set 
$\{ \cK\in \G_*\,:\,  \eta^\s_\cdot [\cK]    \in A \}$ is indeed measurable and therefore $\bbP^\s$ is well defined.


Similarly to Propositions \ref{costruzione_SSEP}, \ref{feller_SSEP} and \ref{inf_SSEP} we have the following results (see Section \ref{provette} and \ref{provette_bis} for their proofs): 
  \begin{Proposition}[Construction of SEP] \label{costruzione_SEP} The family $\big\{\bbP^\s:\s\in \{0,1\}^S\big\}$ of probability measures 
  on the  filtered measurable space $( D_{\{0,1\}^S }, (\cF_t)_{t\geq 0}, \cF)$
   is a Markov process (called \emph{simple exclusion process with rates $c_{x,y}$}), i.e.
  \begin{itemize}
  \item[(i)] $\bbP^\s( \eta_0=\s)$ for all $\s \in \{0,1\}^S$;
  \item[(ii)] for any $A\in \cF$ the function $\{0,1\}^S\ni \s\mapsto \bbP^\s( A) \in [0,1]$ is measurable;
    \item[(iii)] for any $\s\in \{0,1\}^S$ and $A\in \cF$ it holds
    $\bbP^\s( \eta_{t+\cdot}\in A\,|\, \cF_t) = \bbP^{\eta_t}(A) $  $ \bbP^\s$--a.s.
  \end{itemize}
  \end{Proposition}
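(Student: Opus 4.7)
The plan is to mirror the proof of Proposition \ref{costruzione_SSEP} (and \cite[Theorem~2.4]{timo}), adapting it to the slab-based construction enforced by Assumption SEP. Item~(i) is immediate since $\eta^\s_0[\cK]=\s$ by the initialization of the iterative algorithm. For item~(ii), the goal is to show that the map $\{0,1\}^S\times\G_*\ni(\s,\cK)\mapsto\eta^\s_\cdot[\cK]\in D_{\{0,1\}^S}$ is jointly measurable; then $\bbP^\s(A)=\int\mathds{1}_A(\eta^\s_\cdot[\cK])\,d\bbP(\cK)$ is measurable in $\s$ by Fubini. Joint measurability follows from the inductive structure of the construction: on the slab $[rt_0,(r+1)t_0]$ one modifies $\eta^\s_{rt_0}[\cK]$ only on the finite components of $\cG^r_{t_0}(\cK)$ (using $\cK\in\cap_r\G_r$, cf.\ Lemma \ref{susine}), and on each component one performs finitely many exchange operations $\eta\mapsto\eta^{x,y}$ determined by the ordered jump times $s_1<\cdots<s_k$ of $\cK$ on that component, all measurable in the noise. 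The obvious inductive step then lifts measurability from the $r$-th slab to the $(r+1)$-st, and Lemma \ref{pablito_o} packages the limit into $D_{\{0,1\}^S}$.

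The substantive content is item~(iii). I would set up the standard restart argument. Define the time-shifted noise $(\theta_t\cK)_{x,y}(s):=\cK_{x,y}(t+s)-\cK_{x,y}(t)$; by independent and stationary increments of the underlying Poisson processes, $\theta_t\cK$ has law $\bbP$ and is independent of the $\sigma$-algebra $\cG_t:=\sigma\bigl(\cK_{x,y}(u):u\leq t,\,(x,y)\in\cE^o_S\bigr)$. The heart of the proof is the cocycle/restart identity
\begin{equation}\label{restart}
\eta^\s_{t+\cdot}[\cK]\;=\;\eta^{\eta^\s_t[\cK]}_\cdot[\theta_t\cK]\qquad \bbP\text{-a.s.}
\end{equation}
Granted \eqref{restart}, the Markov property follows routinely: for $A\in\cF$ and any $\cF_t$-measurable functional of $\eta^\s_\cdot[\cK]$, one rewrites it as a $\cG_t$-measurable functional of $\cK$ (restricted to $[0,t]$), conditions on $\cG_t$, and uses independence of $\theta_t\cK$ plus \eqref{restart} to obtain $\bbP^\s(\eta_{t+\cdot}\in A\mid\cF_t)=\bbP^{\eta_t}(A)$.

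The main obstacle is \eqref{restart} itself, because the construction proceeds by slabs of length $t_0$ while the shift by $t$ is arbitrary. For $t=kt_0$ the identity is essentially tautological: the connected components $\cG^r_{t_0}(\theta_{kt_0}\cK)$ driving the construction of the right-hand side coincide with $\cG^{k+r}_{t_0}(\cK)$, and the jump times with their ordering inside each slab transfer verbatim. For general $t\in(rt_0,(r+1)t_0)$ the slabs on the two sides are misaligned, so I would first establish that the constructed trajectory $\eta^\s_\cdot[\cK]$ is insensitive to the particular value of $t_0$ used in its definition. For this, I would use the monotonicity of Assumption SEP: if it holds at $t_0$, then for any $t_0'<t_0$ we have $\cG_{t_0'}(\cK)\subset\cG_{t_0}(\cK)$, so Assumption SEP holds at $t_0'$ as well, and by a direct comparison on a common refinement of the two grids one checks that the two constructions produce the same path (both are characterized, within each finite component and away from a null set, by sequentially processing the jump times of $\cK$ according to the exclusion rule). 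Once independence of the chosen $t_0$ is in hand, \eqref{restart} at an arbitrary $t$ reduces to the already verified case by choosing a grid with $t\in t_0'\bbN$. The right-continuity of $\eta^\s_\cdot[\cK]$ (Lemma \ref{pablito_o}) takes care of the measure-zero set where $t$ coincides with a jump time of $\cK$.
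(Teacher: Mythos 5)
Your proposal is correct and follows essentially the paper's route: items (i)--(ii) are handled as for Proposition \ref{costruzione_SSEP} (your joint-measurability/Fubini variant of (ii) is a harmless substitute for the paper's Dynkin $\pi$--$\lambda$ argument based on Lemma \ref{messi_o}), and item (iii) rests on the restart identity \eqref{andreino1} together with independence and stationarity of the Poisson increments, exactly as in the paper. The additional work you do to justify the restart identity for arbitrary $t$ -- insensitivity of the slab construction to the choice of $t_0$, via comparison on the merged partition -- is precisely the point the paper leaves implicit (its proof only remarks that \eqref{andreino1} ``can anyway be obtained from the graphical construction'', cf.\ the Remark following the proposition), and your sketch of it is sound.
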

 \begin{Remark} By changing $t_0$ in the graphs $\cG_{t_0}^r (\cK)$, for $\bbP$--a.a.~$\cK$ the     path $ \eta^\s _\cdot[\cK]$ constructed above does not change, and this  for any $\s$. In particular, the above SEP does not depend on the particular $t_0$ for which Assumption SEP holds.
 \end{Remark}
  \begin{Proposition}[Feller property] \label{feller_SEP}
Given $f\in C(\{0,1\}^S)$ and given $t\geq 0$, the map $S_t f (\s):= \bbE\left[ f\big( \eta^\s_t [\cK] \big)\right]
=\int d\bbP^\s(\eta_\cdot) f(\eta_t)$
 belongs to $C(\{0,1\}^S)$. In particular, the SEP  with rates $c_{x,y}$ is a Feller process.
\end{Proposition}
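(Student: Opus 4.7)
The plan is to mimic the argument for Proposition~\ref{feller_SSEP} but to exploit Assumption~SEP through the connected components of the graphs $\cG^r_{t_0}(\cK)$. Since the metric \eqref{sarto} makes $\{0,1\}^S$ compact and metrizable, the Feller property reduces to sequential continuity: given $\s_n \to \s$ in $\{0,1\}^S$, I must show $\bbE[f(\eta^{\s_n}_t[\cK])] \to \bbE[f(\eta^\s_t[\cK])]$. Since $f$ is bounded, dominated convergence reduces this further to showing that, for $\bbP$-a.a.~$\cK$, one has $\eta^{\s_n}_t[\cK] \to \eta^\s_t[\cK]$ coordinatewise in $\{0,1\}^S$, after which the continuity of $f$ closes the argument.

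The key structural fact is that, for $\cK \in \G_*$ and fixed $t \geq 0$ and $x \in S$, the value $\eta^\s_t[\cK](x)$ depends on $\s$ only through its restriction to a finite subset $A_{t,x}(\cK) \subset S$. I would define $A_{t,x}(\cK)$ recursively from the graphical construction: set $R := \lfloor t/t_0 \rfloor$, let $\cC_R$ be the connected component of $\cG^R_{t_0}(\cK)$ containing $x$; inductively, let $\cC_{r-1}$ be the union over $y \in \cC_r$ of the connected component of $\cG^{r-1}_{t_0}(\cK)$ through $y$; finally set $A_{t,x}(\cK) := \cC_0$. By Lemma~\ref{lemma_prop_r}, on a set of full $\bbP$-measure each $\cG^r_{t_0}(\cK)$ has only components of finite cardinality, so a finite induction shows $A_{t,x}(\cK)$ is finite for $\bbP$-a.a.~$\cK$. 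That $\eta^\s_t[\cK](x)$ is determined by $\s|_{A_{t,x}(\cK)}$ follows by induction on $r$: on the time interval $(rt_0,(r+1)t_0]$ the update rule inside a connected component $\cC$ of $\cG^r_{t_0}(\cK)$ uses only the configuration at time $r t_0$ restricted to $\cC$ together with the Poisson marks inside $\cC$.

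Fix now $\cK$ in the full-measure set where $A_{t,x}(\cK)$ is finite for every $x$. If $\s_n \to \s$ in $\{0,1\}^S$, then $\s_n \equiv \s$ on the finite set $A_{t,x}(\cK)$ for all $n$ large enough (depending on $\cK$ and $x$), hence $\eta^{\s_n}_t[\cK](x) = \eta^\s_t[\cK](x)$ for all such $n$; running over $x \in S$ yields the coordinatewise convergence needed above, and continuity of $f$ combined with bounded convergence finishes the proof. The main technical point I expect to dwell on is the careful inductive verification that $\eta^\s_t[\cK](x)$ depends on $\s$ only through $\s|_{A_{t,x}(\cK)}$: morally this is immediate from the locality of the update rule inside a connected component of $\cG^r_{t_0}(\cK)$, but writing it out cleanly requires matching the recursion defining $A_{t,x}(\cK)$ to the step-by-step recursion in the graphical construction of $\eta^\s_\cdot[\cK]$. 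Once that piece of bookkeeping is in place, the measurability of the good $\cK$-set and the passage to the limit are routine.
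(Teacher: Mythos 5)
Your proposal is correct and follows essentially the same route as the paper: the paper proves Proposition \ref{feller_SEP} by repeating the proof of Proposition \ref{feller_SSEP} (dominated convergence plus continuity in $\s$ of $\eta^\s_t[\cK]$) with Lemma \ref{messi} replaced by Lemma \ref{messi_o}, and the finite influence set $A_{t,x}(\cK)$ you construct backwards through the components of the graphs $\cG^r_{t_0}(\cK)$ is exactly the set $B_{r,x}(\cK)$ of Definition \ref{brunetta}, whose locality property is the content of Remark \ref{rem_latte}. The only cosmetic difference is that you verify coordinatewise convergence of $\eta^{\s_n}_t[\cK]$ rather than metric continuity in $\s$ (equivalent in the product topology), and your choice $R=\lfloor t/t_0\rfloor$ at times $t$ that are exact multiples of $t_0$ just yields a slightly larger, still finite, influence set, which is harmless.
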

  \begin{Proposition}[Infinitesimal generator \rot{on local functions}] \label{inf_SEP}
 Local functions belong to the domain $\cD(\cL)$ of the infinitesimal generator $\cL$ of the  SEP  with rates $c_{x,y}$. Moreover, for any local function $f$,   we have 
 \be\label{mammaE_bis} 
 \cL f(\eta) = \sum_{x\in S} \sum_{y\in S}c_{x,y}  \,\eta(x) \bigl( 1- \eta(y)\bigr) \left[ f( \eta ^{x,y})- f(\eta)\right]\,,\;\; \eta \in \{0,1\}^{S}\,.
 \en
 The \rosso{series} in the r.h.s. is \rot{an  absolutely convergent series of functions in $C(\{0,1\}^S)$}.
  \end{Proposition}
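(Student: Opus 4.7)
\emph{Proof proposal.} The argument mirrors the SSEP case (Proposition \ref{inf_SSEP}, Section \ref{manfred}). Fix a finite set $A\subset S$ on which the local function $f$ depends. For $\{x,y\}$ disjoint from $A$ we have $f(\eta^{x,y})=f(\eta)$, and otherwise $|f(\eta^{x,y})-f(\eta)|\le 2\|f\|_\infty$; therefore
\[
\sum_{x,y}c_{x,y}\bigl|\eta(x)(1-\eta(y))[f(\eta^{x,y})-f(\eta)]\bigr|\le 2\|f\|_\infty\sum_{z\in A}\Bigl(\sum_{y\in S}c_{z,y}+\sum_{y\in S}c_{y,z}\Bigr)<+\infty
\]
by Lemma \ref{lemma_somma}. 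This yields absolute convergence and, since the partial sums converge uniformly in $\eta$, the function $g(\eta)$ defined by the right-hand side of \eqref{mammaE_bis} belongs to $C(\{0,1\}^S)$.

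To show $f\in\cD(\cL)$ with $\cL f=g$, I would extract a Dynkin-type identity directly from the graphical construction. The Poisson marks in $[0,t]$ on ordered pairs $(x,y)\in\cE^o_S$ with $\{x,y\}\cap A\ne\emptyset$ have finite total intensity $\alpha_A:=\sum_{z\in A}(c_z+\sum_{y\in S}c_{y,z})<\infty$ by Lemma \ref{lemma_somma}; enumerate them as $\tau_1<\tau_2<\dots$ on ordered edges $(X_n,Y_n)$. Since Poisson marks disjoint from $A$ do not alter $f(\eta^\s_\cdot[\cK])$, this path is piecewise constant and jumps only at those $\tau_n\le t$ where the swap actually fires:
\[
f(\eta^\s_t)-f(\s)=\sum_{n:\,\tau_n\le t}\eta^\s_{\tau_n-}(X_n)\bigl(1-\eta^\s_{\tau_n-}(Y_n)\bigr)\bigl[f\bigl((\eta^\s_{\tau_n-})^{X_n,Y_n}\bigr)-f(\eta^\s_{\tau_n-})\bigr].
\]
The summands are bounded by $2\|f\|_\infty$ in absolute value and the expected number of terms equals $\alpha_A t<\infty$; Fubini and the Campbell / Mecke formula for the independent Poisson processes $N_{x,y}$ then give
\[
\bbE[f(\eta^\s_t)]-f(\s)=\int_0^t\bbE\bigl[g(\eta^\s_u)\bigr]\,du=\int_0^t(S(u)g)(\s)\,du.
\]

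Strong continuity of the Feller semigroup---which follows from Proposition \ref{feller_SEP} via right-continuity of $u\mapsto\eta^\s_u[\cK]$ together with dominated convergence, just as in the SSEP case---implies $\|S(u)g-g\|_\infty\to 0$ as $u\downarrow 0$, whence
\[
\Bigl\|\frac{S(t)f-f}{t}-g\Bigr\|_\infty\le\frac{1}{t}\int_0^t\|S(u)g-g\|_\infty\,du\;\xrightarrow[t\downarrow 0]{}\;0.
\]
This simultaneously proves $f\in\cD(\cL)$ and $\cL f=g$, giving \eqref{mammaE_bis}. The main technical point will be the rigorous justification of the Campbell / Mecke identity in this infinite setting: one must verify that the integrand $c_{x,y}\eta^\s_{s-}(x)(1-\eta^\s_{s-}(y))[f((\eta^\s_{s-})^{x,y})-f(\eta^\s_{s-})]$ is predictable for the natural filtration on $D_{\bbN}^{\cE^o_S}$ and absolutely integrable on $[0,t]\times\cE^o_S$. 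Predictability is secured by the left-continuity of $u\mapsto\eta^\s_{u-}[\cK]$ together with the measurability of the graphical map (Lemma \ref{pablito_o}), and integrability is immediate from the locality of $f$ and the finiteness of $\alpha_A$.
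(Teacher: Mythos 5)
Your proposal is correct in substance but follows a genuinely different route from the paper. The paper proves \eqref{mammaE_bis} by a direct short-time expansion: it conditions on the number of Poisson marks on edges meeting $A$ during $[0,t]$, shows that two or more marks have probability $O(t^2)$, and shows via Claim \ref{violaceo} that, up to an event of probability $o(t)$, a single mark comes with $\{x_0,y_0\}$ forming a connected component of $\cG_t(\cK)$ on its own, so the effect on $A$ is exactly one attempted swap; this yields $S(t)f(\s)-f(\s)=t\,\hat\cL f(\s)+o(t)$ uniformly in $\s$, and hence $f\in\cD(\cL)$ with $\cL f=\hat\cL f$ directly from the definition of the generator. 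You instead telescope $f(\eta^\s_t[\cK])-f(\s)$ along the a.s.\ finitely many marks touching $A$ (finiteness by Lemma \ref{lemma_somma}), compensate the Poisson processes to obtain the Dynkin identity $S(t)f-f=\int_0^t S(u)g\,du$, and conclude by strong continuity of the semigroup; this buys you a cleaner argument with no $o(t)$ bookkeeping of the type in Claim \ref{violaceo}, at the price of importing the compensation (Campbell/Mecke, or martingale) machinery and the strong continuity of $(S(t))_{t\ge 0}$, which the paper establishes explicitly only for the SSEP but which transfers verbatim via right-continuity of the paths and dominated convergence. The one point you should make precise, beyond the compensation formula you already flag, is adaptedness: predictability of your integrand requires that $\eta^\s_u[\cK]$ be measurable with respect to the $\s$--algebra generated by $(\cK_{x,y}(s))_{s\le u}$, and this is not immediate from Lemma \ref{pablito_o}, since the block construction uses the graphs $\cG^r_{t_0}(\cK)$, which depend on marks up to time $(r+1)t_0>u$. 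Adaptedness does hold — as observed at the beginning of Section \ref{provette_bis} the state at time $u\le t_0$ is unchanged if one runs the construction with $\cG_u(\cK)$ in place of $\cG_{t_0}(\cK)$, and the same argument applies within each later block (compare also the Remark following Proposition \ref{costruzione_SEP}) — but it must be stated, since without it the compensation formula cannot be invoked.
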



\rot{We now show that Proposition \ref{inf_SEP} can be extended to a larger class of functions.
To this aim, given $f\in C(\{0,1\}^S)$, recall the definition of  $\D_f(x)$ given in  \eqref{nero} and recall that 
$\vertiii{f}:=\sum_{x\in S}  \D_f(x)$ (cf.~\eqref{bianco}). While in the symmetric case we considered 
$\vertiii{f}_*:= \sum_{x\in S} c_x \D_f(x)$, we now set
\[ \vertiii{f}_\star:=
\sum_{x\in S} c^s_x \D_f(x) \; \text{ where }\; c^s_x := \sum _{y\in S} c_{x,y}^s\,.
\]
Similarly to  Proposition \ref{eastpak} we have the following:
\begin{Proposition}[\rot{Infinitesimal generator on further good functions}] \label{eastpakbis}
Let $f\in C(\{0,1\}^S)$ satisfy $\vertiii{f}<+\infty$ and $\vertiii{f}_\star<+\infty$. 
Then $f\in \cD(\cL)$ and 
$
\cL f(\eta)=  \sum _{x\in S} \sum_{y\in S}c_{x,y} \eta(x) ( 1-\eta(y) )  \bigl[ f(\eta^{x,y})-f(\eta) \bigr]$,
where the r.h.s. 
 is an absolutely convergent series of functions in $C(\{0,1\}^S)$.
 \end{Proposition}
We point out that  local functions satisfy  both $\vertiii{f}<+\infty$ and $\vertiii{f}_\star<+\infty$, hence Proposition \ref{eastpakbis} is an extension of Proposition  \ref{inf_SEP}. The proof of Proposition \ref{eastpakbis} is given in Section \ref{cedrata}.
}

\smallskip

\rot{Finally we provide a criterion assuring that the family of local functions is a core for the generator $\cL$. To this aim we need the following:}
\begin{Definition}[Set $B_{r,x}(\cK)$ \rot{and $\cC_{t,x}(\cK)$}] \label{brunetta}
 Given $x\in S$, $r \in \bbN$ and $\cK\in \G_*$  we define the set $B_{r,x}(\cK)$ as follows. First we let $C_0$ be the connected component of $x$ in the graph $\cG^r _{t_0}(\cK)$. Then,  we  
 let $C_1$ be  the  union of the connected components in the graph $\cG^{r-1}_{t_0}(\cK)$ of $y$ as $y$ varies in $C_{0}$. In general, we 
  introduce iteratively $C_1,C_2,\dots, C_r$ by defining 
  $C_{j}$ as  the  union of the connected components in the graph $\cG^{r-j}_{t_0}(\cK)$ of $y$ as $y$ varies in $C_{j-1}$. We then set  $B_{r,x}(\cK):= C_r$ \rot{and $\cC_{t,x}(\cK):=B_{r,x}(\cK)=C_r$ for $rt_0<t\leq (r+1)t_0$, $\cC_{0,x}:=\{x\}$.}
  \end{Definition}
\rot{Properties and relevance of $B_{r,x}(\cK)$ and  $\cC_{t,x}(\cK)$ will be commented in Remark \ref{rem_latte} in Section \ref{sec_proof_SEP}.   We can now describe the above mentioned criterion:
  \begin{Proposition}[Core for $\cL$] \label{volpinobis} Suppose that 
\begin{align}
 &  \bbE\big[ | \cC_{t,x}|\big] <+\infty\qquad\;\; \;\,\forall x\in S\,, \forall t\in \bbR_+\,,\label{neve1}\\
 &  \bbE\big[ \sum_{z\in \cC_{t,x}} c_z^s \big]<+\infty \qquad \forall x\in S\,, \forall t\in \bbR_+ \,. \label{neve2}
 \end{align}
  Then the family $\cC$ of local functions is a core for $\cL$.
  \end{Proposition}
  The proof of Proposition \ref{volpinobis} is given in  Section \ref{astro}.
  }
  
\begin{Remark}\label{silenziatobis}
\rot{Trivially, by combining Proposition  \ref{eastpak} and Proposition \ref{volpino},  we get that the set $\{f\in C(\{0,1\}^S)\,:\,\vertiii{f}<+\infty\,, \;\vertiii{f}_*<+\infty\}$ is a core for  $\cL$ under conditions  \eqref{neve1} and \eqref{neve2}.}
\end{Remark}

\rot{The above conditions \eqref{neve1} and \eqref{neve2} correspond to   a countable family  of requests. Indeed, since 
  \eqref{neve1} and \eqref{neve2} are trivially satisfied for $t=0$ as  $\cC_{0,x}=\{x\}$, 
   \eqref{neve1} and \eqref{neve2}  are equivalent respectively to \eqref{danza1} and \eqref{danza2}:
   \begin{align}
 & \bbE\big[ | B_{r,x}(\cK)|\big] <+\infty \qquad \;\;\forall r\in \bbN\,,\;\forall x\in S\,,\label{danza1} \\
 & \bbE\big[ \sum_{z\in B_{r,x}(\cK)} c_z^s \big]<+\infty\qquad \forall r\in \bbN\,,\;\forall x\in S\,.
\label{danza2}
\end{align}
}

\smallskip

\rot{Below we write that $x\longleftrightarrow y$ in $\cG_{t_0}(\cK)$ if $\{x,y\}$ is an edge of $\cG_{t_0}(\cK)$. Since $\cC_{t_0, x}(\cK)$ corresponds to the connected component of $x$ in $\cG_{t_0}(\cK)$, we have $x\longleftrightarrow y$  if and only if 
$y\in \cC_{t_0, x}(\cK)$.
Given $x, y$ in $ S$, let 
\be\label{def_connie}
p(x,y):=\bbP( x\longleftrightarrow y \text{ in } \cG_{t_0})\,.
\en
The following result reduces the verification of \eqref{neve1} and \eqref{neve2} to a percolation problem associated to the random graph $\cG_{t_0} (\cK)$:} 
\begin{Proposition}\label{boreale}
\rot{Condition \eqref{neve1} is satisfied if   for all $x\in S$ and $ n\in \bbN_+$
\be\label{vento1}
\sum _{x_1,x_2,\dots, x_n\in S} p(x,x_1) p(x_1, x_2) \cdots p(x_{n-1}, x_n) <+\infty \,;
\en
equivalently if  for all $x\in S$ and $ n\in \bbN$
\be\label{vento1bis}
\sum _{x_1,x_2,\dots, x_n \in S}  p(x,x_1) p(x_1, x_2) \cdots p(x_{n-1}, x_{n}) \bbE\big[ | \cC_{t_0,x_n}| \big]<+\infty\,.
\en
Condition \eqref{neve2} is satisfied if  for all $x\in S$ and $n\in \bbN_+$
\be\label{vento2}
\sum _{x_1,x_2, \cdots ,x_n\in S} p(x,x_1) p(x_1, x_2) \cdots p(x_{n-1}, x_n)
c^s_{x_n}<+\infty\,;
\en
equivalently if for all $x\in S$ and $n\in \bbN $
\be\label{vento2bis}
\sum _{x_1,x_2, \cdots ,x_n\in S} p(x,x_1) p(x_1, x_2) \cdots p(x_{n-1}, x_n) \bbE\big[ \sum_{w\in  \cC_{t_0,x_n}} c_w^s \big]<+\infty\,.
\en
}
\end{Proposition}
\rot{Proposition \ref{boreale} is proved in Section \ref{aurora85}}. 

\rot{For $n=0$ \eqref{vento1bis} and \eqref{vento2bis} have to be thought of as  $\bbE\big[ | \cC_{t_0,x}| \big]<+\infty$ and $\bbE\big[ \sum_{w\in  \cC_{t_0,x}} c_w^s \big]<+\infty $.
Since $\sum_{y\in S} p(x,y)=| \cC_{t_0, x}|$, 
  \eqref{vento1bis} and \eqref{vento2bis} are automatically satisfied for all $x\in S$ and $n\in \bbN$  if 
\be \label{lucina} \sup_{x\in S} \bbE\big[ | \cC_{t_0,x}| \big]<+\infty \;\; \text{ and }\;\; \sup_{x\in S} \bbE\big[ \sum_{w\in  \cC_{t_0,x}} c_w^s \big]<+\infty \,.
\en 
If  $\sup_{x\in S} c_x^s<+\infty$, then \eqref{lucina} reduces to 
$\sup_{x\in S} \bbE\big[ | \cC_{t_0,x}| \big]<+\infty  $ (this is indeed what checked in \cite{timo} in order to prove 
that $\cC$ is a core for $\cL$ for the exclusion process on $\bbZ^d$  considered in \cite[Proposition~4.5]{timo}).}


 
 \section{Applications to SEPs in a random environment} \label{SEP_environment}
 We now consider the SEP with state space $S(\o)$    and jump probability rates $c_{x,y}(\o)$ depending on a random environment $\o$. More precisely,  we have a probability space  $(\O,\cG,
 \cP)$ and the environment $\o$ is a generic element of $\O$.

Due to the results presented in Section \ref{sec_SEP}, to have  $\cP$--a.s.~a well defined process built by the graphical construction and  enjoying the properties stated in Propositions \ref{costruzione_SEP}, \ref{feller_SEP} and \ref{inf_SEP},  it  is sufficient to check  that for  $\cP$--a.a.~$\o$ Assumption SEP is valid with $S=S(\o)$ and $c^s_{x,y}=c^s_{x,y}(\o):= c_{x,y}(\o) +c_{y,x}(\o)$.
 This becomes an interesting problem in percolation theory \rosso{since one has a percolation problem in a random environment}. 
By restating the content of Propositions 5.1 and 5.2 in \cite{F_SEP} in the present setting ($c^s_{x,y}(\o)$ here plays the role of the conductance of $\{x,y\}$ there)  we have the two  criteria below, where  $\bbE^o_d$ ($\bbE_d$) denotes  the set of directed (undirected) edges of the lattice $\bbZ^d$.

\begin{Proposition}\cite[\rot{Proposition}~5.1]{F_SEP} \label{cuore25}
 Let $  \O:=[0,+\infty)^{\bbE^o_d}$. Suppose that  $\cP$ is stationary w.r.t. shifts. Take $S( \o) :=\bbZ^d$ and $c_{x,y}(\o):= \o_{x,y}$ for  $(x,y)\in \bbE^o_d$ and $c_{x,y}(\o):=0$ otherwise. Then for $\cP$--a.a.~$\o$ Assumption SEP is satisfied if at least 
  one of the following conditions is fulfilled: 
\begin{itemize}
\item[(i)]   $\cP$--a.s. there exists a constant $C(\o)$ such that $\o_{x,y}\leq C(\o)$ for all $(x,y)\in \bbE^o _d$; 
\item[(ii)] under $\cP$ the  random variables   $c^s_{x,y}(\o) =\o_{x,y}+\o_{y,x}$  are independent when varying $\{x,y\}$  in $\bbE_d$;
\item[(iii)] for some $k>0$ under $\cP$ the random variables $c^s_{x,y}(\o)=\o_{x,y}+\o_{y,x}$   are $k$--dependent  
 when varying $\{x,y\}$  in $\bbE_d$.
\end{itemize}
\end{Proposition}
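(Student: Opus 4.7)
The plan is to apply the equivalent formulation of Assumption SEP given right after Lemma \ref{lemma_prop_r}: conditionally on $\o$, the random graph $\cG_{t_0}(\cK)$ is Bernoulli bond percolation on $\bbZ^d$ in which each edge $\{x,y\}\in\bbE_d$ is open independently with probability $1-\exp\{-c^s_{x,y}(\o)t_0\}$. Using that subcritical Bernoulli bond percolation on $\bbZ^d$ (parameter $p<p_c(d)$) has only finite clusters almost surely, it suffices in each of the three cases to exhibit a stochastic domination---for a suitable $t_0$, possibly depending on $\o$---of the edge-configuration of $\cG_{t_0}(\cK)$ by i.i.d.~Bernoulli$(p)$ bond percolation on $\bbZ^d$ with $p<p_c(d)$. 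Remark \ref{dominio} will be convenient here, since it permits replacing the family $\{c^s_{x,y}\}$ by any pointwise larger family $\{\bar c_{x,y}\}$.

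For case (i), under the $\cP$--a.s.~bound $\o_{x,y}\leq C(\o)$ one has $c^s_{x,y}(\o)\leq 2C(\o)$ for every $\{x,y\}\in\bbE_d$; applying Remark \ref{dominio} with $\bar c_{x,y}\equiv 2C(\o)$ and picking $t_0=t_0(\o)>0$ small enough that $1-e^{-2C(\o)t_0(\o)}<p_c(d)/2$ reduces the problem to subcritical i.i.d.~Bernoulli$(p_c(d)/2)$ bond percolation. For case (ii), independence of $\{c^s_{x,y}\}_{\{x,y\}\in\bbE_d}$ under $\cP$ together with the conditional independence of the Poisson processes $\cK^s_{x,y}$ given $\o$ makes the unconditional open-edge indicators of $\cG_{t_0}(\cK)$ independent; by shift stationarity their marginals $q_{x,y}(t_0):=1-\int\cP(d\o)e^{-c^s_{x,y}(\o)t_0}$ take only one value per coordinate direction and all tend to $0$ as $t_0\da 0$ by dominated convergence, so choosing a deterministic $t_0$ with $\max_{x,y}q_{x,y}(t_0)<p_c(d)$ closes this case.

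Case (iii) is, I expect, the main obstacle. Now the open-edge indicators of $\cG_{t_0}(\cK)$ form only a $k$-dependent Bernoulli family, but with small marginals $q(t_0)\da 0$ as $t_0\da 0$ (uniform across coordinate directions by shift stationarity). I would invoke the Liggett--Schonmann--Stacey domination theorem applied to the \emph{closed}-edge indicators (whose marginals $1-q(t_0)$ can be made arbitrarily close to $1$): it produces an i.i.d.~Bernoulli$(q')$ field stochastically dominated by the closed-edge configuration, with $q'=q'(q(t_0),k,d)\to 1$ as $q(t_0)\da 0$; equivalently, the open-edge indicators are dominated from above by i.i.d.~Bernoulli$(1-q')$, and one chooses $t_0$ so that $1-q'<p_c(d)$. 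The delicate point is verifying the precise hypotheses of Liggett--Schonmann--Stacey in the edge setting (correct notion of graph distance on $\bbE_d$, uniformity of the marginal bound across directions), both of which ultimately follow from the shift stationarity of $\cP$.
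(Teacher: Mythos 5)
The paper gives no internal proof of this statement: it is quoted from \cite[Proposition~5.1]{F_SEP} and the text around it only says that checking Assumption SEP amounts to proving subcriticality of a percolation model with random parameters. So there is nothing in this paper to compare against line by line; your sketch follows the route one would expect, and your treatment of the three cases is essentially sound: (i) a quenched domination by homogeneous subcritical Bernoulli bond percolation after choosing $t_0=t_0(\o)$ small (allowed, since Assumption SEP is a statement about a fixed rate family), (ii) independence of the edge indicators with small marginals, (iii) $k$-dependence plus the Liggett--Schonmann--Stacey domination theorem, applied on the line graph of $\bbZ^d$ (bounded degree), exactly as you indicate.

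One point needs to be stated more carefully, because as written your plan and your execution do not match in cases (ii)--(iii). The opening plan promises a stochastic domination of the edge configuration of $\cG_{t_0}(\cK)$, for a suitable $t_0(\o)$, by subcritical i.i.d.\ percolation; this is literally what you do in case (i), but such a quenched, edge-by-edge domination is in general false in (ii)--(iii): with unbounded i.i.d.\ conductances, for every fixed $t_0>0$ a positive density of edges has $1-e^{-c^s_{x,y}(\o)t_0}$ arbitrarily close to $1$. What your argument in (ii)--(iii) really dominates is the \emph{annealed} field, i.e.\ the law of the indicators $\mathds{1}\bigl(\cK^s_{x,y}(t_0)>0\bigr)$ under $\cP\otimes\bbP$: independence (resp.\ $k$-dependence) of the $c^s_{x,y}(\o)$ together with the independence of the Poisson marks, which are also independent of $\o$, makes this unconditional field independent (resp.\ $k$-dependent), with marginals $q_e(t_0)\da 0$ taking only $d$ values by stationarity; then domination by subcritical i.i.d.\ percolation (directly in (ii), via LSS applied as you describe in (iii)) gives that all clusters are finite $\cP\otimes\bbP$-a.s., using that ``all clusters finite'' is a decreasing event. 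You must then add the (easy, but missing) transfer step: since this event is measurable (Lemma \ref{susine}) and of full product measure, Fubini yields that for $\cP$-a.a.\ $\o$ its $\bbP$-section has full measure, which is precisely Assumption SEP for that $\o$, with a deterministic $t_0$. Finally, a cosmetic remark: pairs outside $\bbE_d$ have rate zero, so a.s.\ they contribute no edges, and Remark \ref{dominio} is indeed enough to reduce to the nearest-neighbour lattice. With these clarifications your proof is complete and is, as far as one can tell, the same strategy as the cited one in \cite{F_SEP}.
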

The \rosso{$k$--dependence} in Item (iii) means that, given $A,B \subset \bbZ^d$ with distance at least $k$, the random fields $( c_{x,y}^s (\o)\,: x,y \in A,\, \{x,y\}\in \bbE_d)$ and $( c_{x,y}^s (\o)\,: x,y \in B,\, \{x,y\}\in \bbE_d)$ are independent \rosso{(see e.g. \cite[Section~7.4]{Gr})}.

\begin{Proposition} \cite[\rot{Proposition}~5.2]{F_SEP}
Suppose that there is a measurable map $\o \mapsto \hat \o$ into the set of locally finite subsets of $\bbR^d$, such that  $\hat \o$  is a Poisson point process  (PPP) when $\o$ is sampled  according to $\cP$.  Take $S(\o):=\hat \o$. Suppose  that, for $\cP$--a.a. $\o$, 
 $c^s_{x,y}(\o) \leq g(|x-y|)$   for any $x,y \in S(\o)$, where $g(r)$ is a fixed bounded function such that 
 the map $ x \mapsto g(|x|) $ belongs to  $L^1(\bbR^d, dx)$. Then 
for $\cP$--a.a. $\o$ 
Assumption SEP is satisfied.
\end{Proposition}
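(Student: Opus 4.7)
The plan is to reduce the statement to subcriticality of a Gilbert-type random connection model on $\hat\o$, which can be handled by standard Palm calculus. Fix $t_0>0$ and observe that, by Remark \ref{dominio} combined with the equivalent formulation of Assumption SEP, it suffices to show that for $\cP$-a.a.~$\o$ the random graph $G_{t_0}(\hat\o)$ with vertex set $\hat\o$ and independent edges $\{x,y\}$ present with probability
\[
p_{t_0}(|x-y|) := 1 - \exp\bigl\{-g(|x-y|)\, t_0 \bigr\}
\]
has only finite connected components almost surely. This is the random connection model on $\hat\o$ with connection function $p_{t_0}$, and since $p_{t_0}$ depends only on $\hat\o$ (not on the rest of $\o$), the question decouples from the environment beyond the PPP.

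I would then work on the joint probability space of the PPP $\hat\o$ together with the independent Bernoulli edge marks. Denote by $\lambda>0$ the intensity of the PPP. The core estimate is a moment bound obtained via iterated application of the Mecke formula: for each $n\geq 1$, the $\cP_0$-expected number of chains $(x_0,x_1,\ldots,x_n)$ of distinct points of $\hat\o$ with $x_0=0$ and with every consecutive pair joined by an edge in $G_{t_0}(\hat\o)$ is bounded above by $m^n$, where
\[
m := \lambda \int_{\bbR^d} p_{t_0}(|x|)\, dx.
\]
Using $p_{t_0}(r)\leq g(r)\,t_0$ and the assumption $g(|\cdot|)\in L^1(\bbR^d)$ gives $m\leq \lambda t_0 \int_{\bbR^d} g(|x|)\, dx$, which tends to $0$ as $t_0\da 0$. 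I would choose $t_0>0$ so small that $m<1$; since the size of the cluster of $0$ is bounded by one plus the total number of such chains of any finite length, its $\cP_0$-expectation is at most $\sum_{n\geq 0}m^n = (1-m)^{-1}<\infty$, so the cluster of $0$ is $\cP_0$-a.s.~finite.

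From this, the Campbell-Mecke formula implies that the expected number of points of $\hat\o\cap [0,1]^d$ lying in an infinite cluster of $G_{t_0}(\hat\o)$ equals $\lambda$ times the Palm probability that the cluster of $0$ is infinite, namely zero. By translation invariance of the PPP, $G_{t_0}(\hat\o)$ a.s.~has no infinite cluster under the joint law of $\hat\o$ and the Bernoulli edge marks. A Fubini argument then gives: for $\cP$-a.a.~$\o$, conditional on $\hat\o$, the random graph $G_{t_0}(\hat\o)$ has only finite connected components almost surely, yielding Assumption SEP.

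The main obstacle I anticipate is the exploration-with-revisitation issue behind the moment bound: a naive cluster-exploration argument must track already-seen vertices, which spoils the independence needed for a clean branching-process domination. Counting chains rather than cluster vertices, and invoking the iterated Mecke formula directly, sidesteps this problem since the bound $m^n$ holds regardless of revisitation and immediately controls the expected cluster size through the crude inequality asserting that the cluster of $0$ has cardinality at most $1+\sum_{n\geq 1}\#\{n\text{-chains from }0\}$.
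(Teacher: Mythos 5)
Your proposal is correct. Note that the paper itself gives no proof of this proposition: it simply restates \cite[Proposition~5.2]{F_SEP}, which in turn rests on subcriticality of the associated continuum percolation model. Your argument supplies exactly that missing content along the standard route: the reduction via Remark \ref{dominio} and the equivalent formulation of Assumption SEP to the random connection model on $\hat\o$ with connection function $1-\exp\{-g(|x-y|)t_0\}$ is the same reduction made in the cited reference, and your chain-counting bound via the iterated Mecke formula (expected number of $n$-chains from the origin at most $m^n$ with $m\leq \l t_0\int_{\bbR^d} g(|x|)\,dx$, hence finite expected cluster size under the Palm law for small $t_0$, hence no infinite cluster by Campbell's formula and stationarity, hence the quenched statement by Fubini) is the classical first-moment proof that the random connection model with integrable connection function is subcritical at small intensity, as found e.g.~in \cite{MR}; the reference invokes such continuum-percolation results rather than redoing the computation, but the substance is the same. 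Two small points: you implicitly take the PPP homogeneous with constant intensity $\l$ (needed for the translation-invariance step), which is the intended reading of the hypothesis; and your observation that counting self-avoiding chains, rather than exploring the cluster, avoids any revisitation/independence issue is exactly the right way to make the first-moment bound clean, since distinct vertices of the chain give distinct unordered pairs and the corresponding Bernoulli edge variables are independent.
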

 By taking $g(r)=2 e^{-r}$ the above proposition implies the following (recall Mott v.r.h. discussed in the Introduction):
 \begin{Corollary}
 For $\cP$--a.a. $\o$ the  Mott v.r.h. on a marked Poisson point process (without the mean field approximation) can be built  as SEP by the graphical construction and  it is a  Feller process, whose Markov generator is given by  \eqref{mammaE_bis} on local functions.
\end{Corollary}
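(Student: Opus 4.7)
The plan is to reduce the corollary to the previous Proposition (the PPP criterion from \cite{F_SEP} with $g(r)=2e^{-r}$) and then invoke the general results in Section \ref{sec_SEP}. The main conceptual content is the verification of the hypotheses of the PPP criterion for Mott v.r.h., while the rest is formal.

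First I would set up the environment. The marked point process $\o=\{(x_i,E_i)\}$ with $x_i\in\bbR^d$ and $E_i\in[-A,A]$ is encoded as a point of a suitable Polish space $\O$, with $\cP$ the law of a marked PPP. The unmarked projection $\o\mapsto \hat\o:=\{x_i\}$ is a measurable map into locally finite subsets of $\bbR^d$, and by the standard marking theorem $\hat\o$ is a Poisson point process on $\bbR^d$ under $\cP$. I take $S(\o):=\hat\o$ and the asymmetric rates $c_{x_i,x_j}(\o)=\exp\{-|x_i-x_j|-\max\{E_j-E_i,0\}\}$.

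Next I bound the symmetrized rate. For $x=x_i\ne x_j=y$ in $\hat\o$,
\[
c^s_{x,y}(\o)=e^{-|x-y|}\bigl(e^{-\max\{E_j-E_i,0\}}+e^{-\max\{E_i-E_j,0\}}\bigr)\leq 2e^{-|x-y|}=:g(|x-y|),
\]
since each exponential factor is at most $1$. The function $g(r)=2e^{-r}$ is bounded on $[0,+\infty)$, and $x\mapsto g(|x|)=2e^{-|x|}$ is integrable on $\bbR^d$ (its $L^1$ norm is $2|\bbS^{d-1}|\,(d-1)!$). Thus the hypotheses of Proposition 5.2 are fulfilled, so for $\cP$--a.a.\ $\o$ Assumption SEP holds with $S=S(\o)$ and rates $c_{x,y}(\o)$.

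Finally I invoke the general theory. Once Assumption SEP is satisfied for a fixed $\o$, Proposition \ref{costruzione_SEP} shows that the graphical construction from Section \ref{sec_SEP} gives a well defined Markov process on $D_{\{0,1\}^{S(\o)}}$; Proposition \ref{feller_SEP} asserts that it is Feller; and Proposition \ref{inf_SEP} yields that local functions lie in the domain of the Markov generator and that the generator acts on them via \eqref{mammaE_bis}. Applying these three results $\cP$--a.s.\ gives exactly the statement of the corollary. I do not anticipate any real obstacle here: the only small point requiring care is confirming that the marked-PPP environment fits the measurable framework of Proposition 5.2 (measurability of $\o\mapsto\hat\o$ and $(\o,x,y)\mapsto c_{x,y}(\o)$), which is straightforward from the definitions, and checking the pointwise bound above, which is immediate.
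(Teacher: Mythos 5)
Your proposal is correct and follows essentially the same route as the paper: bound the symmetrized Mott rates by $c^s_{x,y}(\o)\leq 2e^{-|x-y|}$, apply the PPP criterion (Proposition 5.2 of \cite{F_SEP}) with $g(r)=2e^{-r}$ to get Assumption SEP for $\cP$--a.a.~$\o$, and then invoke Propositions \ref{costruzione_SEP}, \ref{feller_SEP} and \ref{inf_SEP}. Nothing is missing.
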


\rot{We now give an application of Proposition \ref{boreale} to check that $\cC$ is a core for the generator. We discuss a special case, where \eqref{lucina} can be  violated (of course, further generalizations are possible, we just aim to illustrate the criterion):
\begin{Proposition} Let $  \O:=[0,+\infty)^{\bbE^o_d}$.   Take $S( \o) :=\bbZ^d$ and $c_{x,y}(\o):= \o_{x,y}$ for  $(x,y)\in \bbE^o_d$ and $c_{x,y}(\o):=0$ otherwise.
Suppose  that  the random variables $c^s_{x,y}(\o)= \o_{x,y}+ \o_{y,x}$ with $\{x,y\}\in \bbE_d$ are i.i.d. and have  finite $(1+\e)$-moment for some $\e>0$ (i.e. the expectation of $c^s_{x,y}(\o)^{1+\e}$ is finite). 
Then, for $\cP$--a.a. environments $\o$,  the  family  $\cC$ of local functions is a core for the generator of the SEP.\end{Proposition}
We point out that for the above model Assumption SEP is satisfied due to Proposition \ref{cuore25}.}

\begin{proof} \rot{Given the environment $\o$, we write $\bbP_\o$ and $p_\o(x,y)$  instead of $\bbP$ and $p(x,y)$ (cf.~Definition~\ref{def_taste} and Eq.~\eqref{def_connie}) in order to stress the dependence on $\o$. Let $p_c$ be the critical probability for the Bernoulli bond percolation on $\bbZ^d$. $p_c=1$ for  $d=1$, while $p_c\in (0,1)$ for $d\geq 2$. In both cases  we can fix $a>0$ such that $\cP( c^s_{x,y}\geq a) < p_c/2$ for  all $\{x,y\}\in \bbE_d$.  Given $(\o,\cK)$  we consider the undirected graph 
$\bar\cG_{t_0}(\o, \cK)$ with vertex set $\bbZ^d$ and edges given by the pairs $\{x,y\}\in \bbE_d$
such  that (i) $c^s_{x,y} (\o) \geq a$ or (ii) $c^s_{x,y} (\o) <a$ and $\cK^s_{x,y}(t_0)> \cK^s_{x,y}(0)$.  When  
$\{x,y\}\in \bbE_d$ is an edge of $\bar\cG_{t_0}(\o, \cK)$ we say that it is open, otherwise we say that it is closed.  Note that $\cG_{t_0} (\cK)$ is a subgraph of $\bar \cG_{t_0} (\o,\cK)$.
Moreover, under $\cP(d\o)  \otimes \bbP_\o$, the random graph $\bar \cG_{t_0} (\o,\cK)$  corresponds to a Bernoulli bond percolation on $\bbZ^d$ \cite{Gr} with parameter 
\[ p:= \cP ( c^s_{x,y} (\o) \geq a) + \int \cP (d\o) \mathds{1}( c^s_{x,y} (\o) < a) \left( 1- e^{- c_{x,y}^s(\o) t_0}\right)<  \frac{p_c}{2} +  1- e^{-a  t_0}\,.
\]
Since $a>0$ we can fix $t_0>0$ small enough to assure that $p<p_c$. 
As a consequence,  $\cP(d\o)  \otimes \bbP_\o$--a.s. the graph $\bar \cG_{t_0} (\o, \cK)$ is subcritical.
Due to the results of subcritical Bernoulli bond percolation (cf.~\cite[Theorem~(5.4)]{Gr}),  there exists $c>0$ such that 
the $\cP(d\o)  \otimes \bbP_\o$--probability that two points $x,y\in \bbZ^d$ are connected in 
$\bar \cG_{t_0} (\o,\cK)$ is bounded by $e^{-c |x-y|}$.  Hence also the  $\cP(d\o)  \otimes \bbP_\o$--probability that two points $x,y\in \bbZ^d$ are connected in 
$ \cG_{t_0} (\cK)$ is bounded by $e^{-c|x-y|}$, i.e.~
\be\label{chiave74}
{\rm E}\left[ p_\o(x,y)\right] \leq e^{-c|x-y|} \qquad  \forall x,y\in S\,,
\en
where ${\rm E}[\cdot]$ denotes the expectation w.r.t.~$\cP$.}

\rot{To get that $\cC$ is a core for $\cL=\cL(\o)$ $\cP$--a.s.  we apply Proposition \ref{boreale}.
Since we just need  that $\cP$--a.s. the  countable family of conditions \eqref{vento1} with $x\in S$ and $n\in \bbN_+$ holds, it is enough to prove that given $x\in S$ and $n\in \bbN_+$ condition \eqref{vento1} holds $\cP$--a.s. and to this aim it is enough to show that 
\be\label{vento1tris}
\sum _{x_1,x_2,\dots, x_n\in S} {\rm E} \left[ p_\o(x,x_1) p_\o(x_1, x_2) \cdots p_\o(x_{n-1}, x_n) \right]<+\infty \,.
\en
By applying several times Schwarz inequality, using that $p_\o(a,b)^\a \leq p_\o(a,b)$ for $\a\geq 1$ since  $p_\o(a,b)\in [0,1]$ and using \eqref{chiave74}, as detailed in an example below, we can bound  the expectation in  \eqref{vento1tris} by 
\be\label{samarcanda89}
\exp\{-C(  |x-x_1|  + |x_1- x_2| +\cdots + |x_{n-1}- x_n|\} \,,
\en
for some constant $C>0$ determined by $c$ and $n$.
This allows to get \eqref{vento1tris}.
For example, for $n=3$, 
we have 
\begin{equation*}
\begin{split} 
{\rm E} \left[ p_\o(x,x_1) p_\o(x_1, x_2)p_\o(x_2,x_3) \right] 
& \leq 
{\rm E} \left[ p_\o(x,x_1)\right]^\frac{1}{2} {\rm E} \left[ p_\o(x_1, x_2)p_\o(x_2,x_3) \right]^\frac{1}{2} \\& 
\leq {\rm E} \left[ p_\o(x,x_1)\right]^\frac{1}{2} {\rm E} \left[ p_\o(x_1, x_2)\right]^\frac{1}{4} {\rm E}\left[p_\o(x_2,x_3) \right]^\frac{1}{4} \\
&\leq \exp\{-\frac{c}{4}(  |x-x_1|  + |x_1- x_2| + |x_2- x_3|) \}\,.
\end{split}
\end{equation*}}

\rot{By similar arguments one can check condition \eqref{vento2}. In particular, as above, it is enough to prove that given $x\in S$ and $n \in \bbN_+$ it holds
\be\label{vento2tris}
\sum _{x_1,x_2, \cdots ,x_n\in S} {\rm E}\left[p_\o(x,x_1) p_\o(x_1, x_2) \cdots p_\o(x_{n-1}, x_n)
c^s_{x_n}(\o)\right]<+\infty\,.
\en
The only difference here is that one has to use H\"older's inequality in the  first step. In particular, one has to  bound the expectation in \eqref{vento2tris}  by 
\be\label{patroclo24}
{\rm E}\left[p_\o(x,x_1) p_\o(x_1, x_2) \cdots p_\o(x_{n-1}, x_n)\right]^\frac{\e}{1+\e}
{\rm E} \left[  c^s_{x_n}(\o)^{1+\e}\right]^\frac{1}{1+\e}<+\infty\,.
\en
Due to the stationarity of $\cP$ and our moment assumption, the expectation ${\rm E} \left[  c^s_{x_n}(\o)^{1+\e}\right]$ is bounded uniformly in $x_n$, while the first  expectation in \eqref{patroclo24}  is bounded by \eqref{samarcanda89} as already observed. This allows to conclude.}
\end{proof}
 \section{Graphical construction,  Markov generator and duality of SSEP: proofs}\label{sec_proof_SSEP}

In this section we detail the graphical construction of the SSEP, thus requiring a certain care on the measurability issues. 
At the end we provide the proof of Propositions \ref{costruzione_SSEP}, \ref{feller_SSEP}, \ref{inf_SSEP}, \rot{\ref{eastpak}}, \rot{\ref{volpino}}  and Lemma \ref{sfinimento} presented in Section \ref{sec_SSEP}.

\smallskip

Let us  start with the graphical construction.
  For later use we note 
 that $\cB( D_\bbN^{\cE_S})$ is \rosso{the $\s$--algebra} generated by  the coordinate maps $D_\bbN^{\cE_S} \ni \cK  \mapsto \cK_{x,y} \in D_\bbN$, as $\{x,y\}$ varies in $\cE_S$ (see the metric \eqref{pioggina}).  Since  $\cB(D_{\bbN})$ is generated by the coordinate maps $D_{\bbN}\ni  \xi \mapsto   \xi (t) \in \bbN$ with $t\in \bbR_+$ 
 (see Section \ref{sec_noto}), we get that $\cB( D_\bbN^{\cE_S})$  is the $\s$--algebra generated by the maps $\cK \mapsto \cK_{x,y}(t)$ as  $\{x,y\}$ varies in $\cE_S$ and $t$ varies in $\bbR_+$.

\smallskip

Given  $\cK\in D_\bbN^{\cE_S}$, $\cK_{x,y}$ is a c\`adl\`ag path with  values in $\bbN$, hence $\cK_{x,y}$ has a finite set of jump times on any interval $[0,T]$, $T>0$.
Given $x\in S$ and $\cK\in D_\bbN^{\cE_S}$, we define $\cK_x:\bbR_+\to \bbN\cup\{+\infty\}$ as 
\be\label{venzone}
\cK_x(t):=\sum_{y\in S: y\not =x } \cK_{x,y}(t)\,.
\en
 Since each map $\cK\mapsto \cK_{x,y}(t)$ is measurable, the same holds for the map $\cK \mapsto \cK_x(t)$. We point out that the path $\cK_x  $ is not necessarily c\`adl\`ag.  For example, take $\cK\in D_{\bbN}^{\cE_S}$ such that, for $1\leq i < j$, 
 \[ \cK_{s_i,s_j }(t): =
 \begin{cases}
 0 &\text{ if } i\not =1 \,,\\
 \mathds{1}( t\geq 1+1/j) &\text{ if }i=1\,.
 \end{cases}
 \]
Then $\cK _{s_1} (t) $ equals zero for $t\leq 1$ and equals $+\infty$ for $t>1$. 

\begin{Definition}\label{def_gamma}
We define $\G\subset D_{\bbN}^{\cE_S}$ as the family of  $\cK\in D_{\bbN}^{\cE_S}$ such that 
\begin{itemize}
\item[(i)]
for all $x\in S$, $K_x(t)$ is finite for all $t\in \bbR_+$ and 
  the map 
$K_x:\bbR_+\to \bbN$ is c\`adl\`ag with jumps of value $+1$;
\item[(ii)] for any $\{x,y\}\in \cE_S$ the path $\cK_{x,y}$ 
 has only  jumps of value $+1$;
\item[(iii)]  given any $\{x,y\}\not=\{x',y'\}$ in $\cE_S$, the set of  jump times of $\cK_{x,y}$ 
and the set of jump times of $\cK_{x',y'}$ 
 are disjoint;
 \item[(iv)] \rot{$\cK_{x,y}(0)=0$ for all $\{x,y\}\in \cE_S$.}
 \end{itemize}
 \end{Definition}

\begin{Lemma}\label{muset}
$\G$ is  measurable, i.e.~$\G\in \cB(D_{\bbN}^{\cE_S}) $, and $\bbP(\G)=1$. 
\end{Lemma}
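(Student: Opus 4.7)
My plan is to express $\Gamma$ as a countable intersection of simpler Borel events of full $\bbP$--measure. For each $\{x,y\}\in\cE_S$ let $B_{x,y}$ be the set of $\cK$ with $\cK_{x,y}$ non-decreasing and having only $+1$ jumps; for each unordered pair $\{e,e'\}$ of distinct elements of $\cE_S$ let $C_{e,e'}$ be the set of $\cK$ for which $\cK_e$ and $\cK_{e'}$ have disjoint jump times; and for each $x\in S$ let $D_x:=\{\cK : \cK_x(n)<+\infty\ \forall n\in\bbN\}$. The claim is that
\[
\Gamma \;=\; \Big(\bigcap_{\{x,y\}\in\cE_S} B_{x,y}\Big) \,\cap\, \Big(\bigcap_{e\neq e'} C_{e,e'}\Big) \,\cap\, \Big(\bigcap_{x\in S} D_x\Big).
\]
The inclusion ``$\subseteq$'' is immediate. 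For ``$\supseteq$'', assume $\cK$ lies in the right-hand side: since an integer-valued c\`adl\`ag path is piecewise constant between its jumps, ``only $+1$ jumps'' forces each $\cK_{x,y}$ non-decreasing, and hence $\cK_x(t)\leq \cK_x(\lceil t\rceil)<+\infty$ for all $t\geq 0$. On any $[0,T]$ only finitely many summands $\cK_{x,y}$ are not identically zero (at most $\cK_x(T)$ of them, by non-decrease and non-negativity), so $\cK_x$ restricted to $[0,T]$ is a finite sum of non-decreasing c\`adl\`ag paths, hence c\`adl\`ag; condition (iii) then forbids two summands from jumping simultaneously, so every jump of $\cK_x$ has size $+1$, giving condition (i).

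The main obstacle is the measurability of $B_{x,y}$, because ``every jump equals $+1$'' is quantified over the uncountable set $t\in\bbR_+$. The key trick is to introduce the hitting times $\tau_k(f):=\inf\{t\geq 0: f(t)\geq k\}$ and to observe that, for non-decreasing $f\in D_\bbN$, every jump equals $+1$ iff $f(\tau_k(f))=k$ for all $k>f(0)$ with $\tau_k(f)<+\infty$: a jump of size $m\geq 2$ from level $j$ to level $j+m$ at time $s$ forces $\tau_{j+1}(f)=s$ and $f(\tau_{j+1})\geq j+2$, violating the equality. Since hitting times and evaluations at hitting times are measurable functions of $f$, and non-decrease of $f$ reduces to the countable condition $f(q_1)\leq f(q_2)$ over rationals $q_1<q_2$ (using right-continuity to promote the rational inequality to all reals and to rule out downward jumps at irrational times), the set $B_{x,y}$ is Borel. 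On $B_e\cap B_{e'}$, the jump times of $\cK_e$ are exactly the finite values in $\{\tau_k(\cK_e):k>\cK_e(0)\}$, so $C_{e,e'}$ becomes the countable intersection of the events $\{\tau_k(\cK_e)\neq\tau_{k'}(\cK_{e'})\}$ and is Borel. For $D_x$, note $\cK_x(n)=\sum_{y\neq x}\cK_{x,y}(n)$ is the countable sum of measurable $[0,+\infty]$--valued maps, hence measurable.

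For $\bbP(\Gamma)=1$ I would invoke standard Poisson process facts. Each $\cK_{x,y}$ under $\bbP$ is a Poisson process of finite rate $c_{x,y}$, so a.s.\ lies in $B_{x,y}$; any two independent Poisson processes have jump times a.s.\ disjoint (the joint law of any fixed pair of jump indices is absolutely continuous on $\bbR_+^2$, and there are countably many such index pairs), so $\bbP(C_{e,e'})=1$; and by Condition (C1) the superposition $\sum_{y\neq x}\cK_{x,y}$ is a Poisson process of finite total rate $c_x<+\infty$, in particular a.s.\ finite at every integer time, giving $\bbP(D_x)=1$. Since $S$ and $\cE_S$ are countable, the intersection retains full $\bbP$--measure, concluding the proof.
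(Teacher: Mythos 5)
Your proof is correct and follows essentially the same route as the paper: both reduce $\G$ to the standard Poisson-process properties (ii)--(iii) together with countably many finiteness conditions on the paths $\cK_x$, verify measurability and full $\bbP$--measure of these events, and then recover property (i) for every $\cK$ in the resulting intersection. The only differences are cosmetic: you spell out via hitting times the measurability that the paper invokes as a standard fact, and you replace its dominated-convergence step for the c\`adl\`ag property of $\cK_x$ by the observation that only finitely many summands are nonzero on a compact time interval.
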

\begin{proof} It is a standard fact that the $\cK$'s satisfying \rot{(ii), (iii) and (iv)} form a measurable set $\rot{\cD} \subset D_\bbN^{\cE_S}$ with   $\bbP$--probability one.  By property (ii)   for all $\cK\in \rot{\cD}$ and $x\in S$ the path $K_x$ is weakly increasing. Hence 
 $\cA:=\cap_{x\in S}\{ \cK\in\rot{\cD}\,:\, \cK_x(t) <+\infty \;\forall t \geq 0\}=\cap_{x\in S}\cap _{t\in \bbQ_+} \{\cK\in\rot{ \cD}\,:\, \cK_x(t)<+\infty \}$, where $\bbQ_+:=\bbQ\cap \bbR_+$.  Since the map $\cK\mapsto \cK_x(t)$ is measurable, also the set  $\{\cK\in\rot{ \cD}\,:\, \cK_x(t)<+\infty \}$ is measurable. Moreover this set has $\bbP$-probability $1$ since $\bbP(\rot{\cD})=1$ and, 
by (C1),  $\bbE[ \cK_x(t)]=\sum_{y\in S:y\not=x} c_{x,y}=c_x <+\infty$. This allows to conclude  that $\cA$  is measurable and  $\bbP(\cA)=1$.

To conclude it is enough to show that $\G=\cA$. Trivially $\G\subset \cA$. To prove that $\cA\subset \G$ we just need  to show that,   given $\cK\in \cA$ and  $x\in S$, $K_x:\bbR_+\to \bbN$ is c\`adl\`ag with jumps of value $+1$.    Let us  first prove that $K_x$ is  c\`adl\`ag .
 To this aim  we fix $t\geq 0$ and take $T>t$. We note that $\sum_{y \in S:y\not =x} \cK_{x,y}(T)=\cK_x(T)<+\infty$, 
$\cK_{x,y} (s) \leq \cK_{x,y} (T)$  for any $s\in (t,T)$ and $\lim_{s\downarrow t} \cK_{x,y}(s)= \cK_{x,y}(t)$ (for the first two properties use that $\cK\in \cA$, for the third one use that $\cK_{x,y}\in D_\bbN$).  Then, by dominated convergence \rosso{applied to the sum among $y\in S$ with $y\not=x$}, we get that $\lim_{ s\downarrow t} \cK_x(s)=\sum_{y\in S:y\not =x} 
\lim_{s\downarrow t}\cK_{x,y}(s)=\cK_x(t)$ (i.e.~$\cK_x$ is right-continuous). Similarly, for $t>0$, we get  that 
 $\lim_{ s\uparrow  t} \cK_x(s)=\sum_{y\in S:y\not =x} 
\lim_{s\uparrow t}\cK_{x,y}(s)=\sum_{y\in S:y\not =x} 
\cK_{x,y}(t-)$ (i.e.~$\cK_x$ has left limits).
Due to the above identities, if $t$ is a jump time of the c\`adl\`ag path $\cK_x$, then the jump value is $\cK_x(t)-\cK_x(t-)= \sum_{y\in S:y\not =x} 
\cK_{x,y}(t)-\sum_{y\in S:y\not =x} 
\cK_{x,y}(t-)$. The above expression and  properties (ii) and (iii) imply that $\cK_x(t)-\cK_x(t-)=+1$.
\end{proof}

 \begin{Definition}\label{def_jack} Let $\cK\in \G$. 
 Given $W\subset \bbR_+$
 and $\{x,y\}\in S$ we define $J_W(\cK_{x,y})$ as the set of jump times of $\cK_{x,y}$ which lie in  $W$. Similarly, given $x\in S$, we define $J_W(\cK_x) $ as the set of jump times of $\cK_{x}$ which lie in  $W$. 
 \end{Definition}
%
Note that, if $\cK\in \G$, then 
  $
   J_W(\cK_x)= \cup_{y\in S\setminus\{x\} }J_W( \cK_{x,y})
   $
   and $J_{[0,T]}   (\cK_x)$ is finite for all   $x\in S$ and $T\geq 0$. \rot{Moreover, $0$ cannot be a jump time of $K_{x,y}$ or $K_x$ since both paths $K_{x,y}$ and $K_x$ are c\`adl\`ag. In particular, $J_{(0,t]}(\cK_x)= J_{[0,t]}(\cK_x)$ and $J_{(0,t]}(\cK_{x,y})= J_{[0,t]}(\cK_{x,y})$.}

 \bigskip

We introduce $\partial$ as  an abstract state not in $S$.
Given $\cK\in \G$ and   given $t\geq 0$, we associate to each $x\in S$ an element $X^x_t[\cK]$ in 
 $S\cup\{\partial\}$ as follows:

\begin{Definition} [Definition of  $X^x_t\text{[}\cK\text{]}$ for $\cK\in \G$ and $t\geq 0$] \label{rumore} $\,\,$ \\
$\bullet$ We first consider the  set $A_1:= J_{[0,t]}(\cK_x) $. If $A_1$ is empty, then we set $X^x_t[\cK]:=x$ and we stop. If $A_1$ is nonempty, then  we define 
 $t_1$ as  the \rosso{largest}  time in $A_1$  and $x_1$ as the unique point in $S$ 
  such that $t_1 \in J_{[0,t]}(\cK_{x,x_1})$\footnote{Note that $t_1$ and $x_1$ are well defined since $\cK\in \G$.}.

$\bullet$  In general,  arrived  at $t_k,x_k$ with $k\geq 1$, we  consider the set $A_{k+1}:= J_{[0,t_k)}(\cK_{x_k})$.  
  If 
  $A_{k+1}$ is empty, then we set $X^x_t[\cK]:=x_k$ and we stop. If $A_{k+1}$ is nonempty, then  we define 
 $t_{k+1}$ as  the \rosso{largest}  time in $A_{k+1}$ and $x_{k+1}$ as the unique point in $S$ 
  such that $t_{k+1}\in J_{[0,t_k)}(\cK_{x_k,x_{k+1}})$.

$\bullet$ If the algorithm stops in a \rot{finite} number of steps, then  $X^x_t[\cK]$ has been defined  by the algorithm itself and belongs to $S$. If the algorithm does not stop, then we set  $X^x_t[\cK]:=\partial$.
\end{Definition}
\begin{Remark}\label{sirialuna}  Note that $X_0^x[\cK]=x$ according to the above algorithm \rot{(indeed, in this case $A_1=\emptyset$ as $\cK_x$ has no jump at time $0$ being c\`adl\`ag)}.
\end{Remark}


\medskip

We endow the countable set $S\cup \{ \partial\}$ with the discrete topology. 
Below, when considering functions with domain  $\G$,  we consider $\G\in \cB(D_\bbN^{\cE_S})$ as a measurable space with $\s$--algebra $\{A\cap \G\,: A\in \cB(D_\bbN^{\cE_S}\}= \{A \in \cB(D_\bbN^{\cE_S}): A\subset \G\}$.
\begin{Lemma}\label{mes0} Given $x\in S$ and $t\geq 0$, 
the map 
$\G \ni \cK \mapsto  X^x_t[\cK]\in S\cup\{\partial\}$
is measurable.
\end{Lemma}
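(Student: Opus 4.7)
The plan is to show measurability of $\G\ni\cK\mapsto X^x_t[\cK]$ by inductively tracking the internal state of the algorithm in Definition~\ref{rumore} through measurable maps, and then reading off the output. Since $S\cup\{\partial\}$ carries the discrete topology, it suffices to check that for every $y\in S\cup\{\partial\}$ the preimage $\{\cK\in\G:X^x_t[\cK]=y\}$ lies in $\cB(D_\bbN^{\cE_S})$.

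First I will establish two auxiliary measurability facts, both essentially folklore for c\`adl\`ag $\bbN$-valued paths. (M1): For any measurable $s:\G\to\bbR_+$ and any $w\in S$, the ``last jump time in $[0,s]$'' of $\cK_w$ (with convention $\max\emptyset:=-1$) is a measurable function of $\cK$, and similarly for $[0,s)$; this reduces, via $J_{[0,s]}(\cK_w)=\bigcup_{y\neq w}J_{[0,s]}(\cK_{w,y})$, to measurability of the last jump time of a single c\`adl\`ag $\bbN$-valued step function on $[0,s]$, which in turn can be written as an infimum of level-crossing times over $\bbQ_+$. (M2): For a measurable time $\tau:\G\to\bbR_+$ and fixed $w,z\in S$, the event $\{\tau(\cK)\in J(\cK_{w,z})\}=\{\cK_{w,z}(\tau(\cK))>\cK_{w,z}(\tau(\cK)-)\}$ is measurable, by joint measurability of the evaluation and left-limit maps on $D_\bbN\times\bbR_+$.

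Next, introducing an auxiliary symbol $\diamond\notin S$ to mark termination, I will construct inductively measurable maps $(x_n,t_n):\G\to(S\cup\{\diamond\})\times\bbR_+$ mirroring the algorithm: set $(x_0,t_0):=(x,t)$; for $n\geq 1$, put $(x_n,t_n):=(x_{n-1},t_{n-1})$ on $\{x_{n-1}=\diamond\}$, and on $\{x_{n-1}\in S\}$ let $t_n:=\max J_{I_n}(\cK_{x_{n-1}})$ with $I_1:=[0,t]$ and $I_n:=[0,t_{n-1})$ for $n\geq 2$; then set $x_n:=\diamond$ if $t_n=-1$, otherwise let $x_n$ be the unique $z\in S$ with $t_n\in J(\cK_{x_{n-1},z})$. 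Decomposing over the countably many values of $x_{n-1}\in S$, measurability of $(x_n,t_n)$ follows inductively from (M1) and (M2).

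Finally, let $N(\cK):=\inf\{n\geq 1:x_n(\cK)=\diamond\}\in\{1,2,\dots,+\infty\}$, which is measurable by the previous step. By construction, $X^x_t[\cK]=x_{N(\cK)-1}$ on $\{N<\infty\}$ and $X^x_t[\cK]=\partial$ on $\{N=+\infty\}$, so for every $y\in S$ one has $\{X^x_t[\cK]=y\}=\bigcup_{n\geq 1}\bigl(\{N=n\}\cap\{x_{n-1}=y\}\bigr)$ and $\{X^x_t[\cK]=\partial\}=\{N=+\infty\}$, both Borel. The main technical obstacle lies in (M1)--(M2), in particular handling the dependence on a $\cK$-dependent time; this is routine given that each $\cK_{w,z}$ is c\`adl\`ag with only unit upward jumps, but must be carried out with some care.
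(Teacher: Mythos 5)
Your argument is correct, but it is organized quite differently from the paper's proof. The paper fixes the target site $y$ and decomposes $\{\cK\in\G: X^x_t[\cK]=y\}$ according to the entire discrete trajectory of the algorithm (the number of steps $m$ and the visited sites $a_1,\dots,a_{m-1}$), and then proves measurability of each such event by an explicit dyadic discretization of time: the event $\{N_x(\cK)=1,\,x_1=y\}$ is written as $\cup_{n_0}\cap_{n\geq n_0}\cup_k \cA_{n,k}$ with $\cA_{n,k}$ defined through the values of $\cK_x$, $\cK_{x,y}$, $\cK_y$ at dyadic times, and both inclusions are verified by hand using the c\`adl\`ag structure (the case $m\geq 2$ is only indicated as analogous). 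You instead track the internal state $(x_n,t_n)$ of the algorithm as measurable functions of $\cK$, by induction on $n$, relying on two general measurability facts for c\`adl\`ag $\bbN$-valued paths: measurability of the last-jump-time functional over an interval with a measurable ($\cK$-dependent) endpoint, and joint measurability of the evaluation and left-limit maps on $D_\bbN\times\bbR_+$; you then read off $X^x_t[\cK]$ from the stopping index $N$. The logic is sound: on $\G$ each $\cK_w$ is a finite, nondecreasing step function with unit jumps whose jump times are those of exactly one $\cK_{w,z}$, so your maximum is attained, the site $x_n$ is uniquely determined on $\{t_n\neq -1\}$, and the countable decomposition over the value of $x_{n-1}$ at each step replaces the paper's decomposition over whole trajectories. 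What your route buys is a uniform treatment of all steps of the algorithm (no ``the case $m\geq 2$ is similar''), at the price of invoking the folklore facts (M1)--(M2), which are standard (e.g. via evaluation at rational times and right-continuity, as you indicate) but would still need to be written out, including the half-open intervals $[0,t_{n-1})$ via left limits; the paper's route is more self-contained and explicit but more laborious per event. One cosmetic point: your $t_n$ takes the value $-1$, so its codomain should be $\{-1\}\cup\bbR_+$ rather than $\bbR_+$, and at $\tau=0$ the left-limit convention in (M2) should be fixed; neither affects the argument.
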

\begin{proof} We just need to show that, given $y\in S$, the set  $\cW=\{\cK\in \G\,:\,
 X^x_t[\cK]=y\}$ is measurable.  Let us call $N_x(\cK)$ the number of steps in the algorithm of Definition  \ref{rumore}. Below $x_1,t_1,x_2,t_2,..$ are as in the above algorithm. Moreover, to simplify the notation, we take $t=1$.
 
 We first take $y\not =x$. The above set  $\cW$ 
is then the union of the countable family of sets $\{\cK\in \G \,:\, N_x(\cK) =m,\,x_1=a_1,\, x_2=a_2\,,\,...\, x_{m-1}=a_{m-1}\,,\; x_m =y\}$,  where 
$m\in \{1,2,\dots\}$,  $a_0:=x$, $a_m:=y$ and $a_1,a_2, \dots, a_{m-1} \in S$ satisfy $a_i \not =a_{i+1}$ for $i=0,2,\dots, m-1$. Let us show for example that the set
$\cA:=\{\cK\in \G \,:\, N_x(\cK) =1,\,x_1=y \}$  is measurable (the case with $N_x(\cK) =m\geq 2$ is similar, just more involved from a notational viewpoint).
We claim that  
\be\label{carattere}
\cA=\rosso{ \cup _{n_0 \geq 1}\left( \cap _{n\geq n_0} \left(\cup_{k =1}^{2^n} \cA_{n,k}\right)\right)}\,,
\en where 
\begin{equation*}
\begin{split}
\cA_{n,k}:=
\bigl\{ &\cK \in \G\,:\,   \cK_x(1)
 =\cK_x(k2^{-n})\,,\;  \cK_x(k 2^{-n})- \cK_x((k-1)2^{-n})=1\,,\; \\
&  \cK_{x,y}(k2^{-n})- \cK_{x,y}((k-1)2^{-n})=1\,,\;\cK_{y}((k-1)2^{-n})= \cK_y(0)\bigr\}\,.
\end{split}
\end{equation*}

To prove our claim take  $\cK \in \cA$.  Since $\cK\in \G$, we know that the map $\cK_x$  takes value in $\bbN$ and    is  c\`adl\`ag with jumps equal to $+1$.  Recall that $t_1>0$ is the last time in the nonempty set  $J_{[0,1]}(\cK_x)$.   This implies that $\cK_x(1)=\cK_x(s)$ for $t_1 \leq s \leq 1$.
Given $n_0$ and $n\geq n_0$ let $k\in\{1,\dots, 2^n\}$ with $(k-1)2^{-n}<t_1\leq k 2^{-n} $. Then, by the  above observation, $\cK_x(1)= \cK_x(k 2^{-n})$.
Since $\cK_x$ is c\`adl\`ag,
 there exists $\e>0$ small  such that   $\cK_x(t_1)-1=\cK_x(s)$ for all $s\in [t_1-\e, t_1)$. Therefore,  
 by taking $n_0$ such that $2^{-n_0}<\e$ and using that $\cK\in \cA$, we have that $ \cK_x(k 2^{-n})- \cK_x((k-1)2^{-n})=1=\cK_{x,y}(k 2^{-n})- \cK_{x,y}((k-1)2^{-n})=1$. By definition of $\cA$ we know that $\cK_y$ has no jumps in $[0,t_1)$, thus implying that $\cK_{y}((k-1)2^{-n})= \cK_y(0)$. This concludes the proof that, if $\cK \in \cA$, then $\cK$ belongs to  r.h.s. of \eqref{carattere}. 
  
 Suppose now that $\cK$ belongs to the r.h.s. of \eqref{carattere}. \rosso{Let us} prove that $\cK\in \cA$. First we observe that, for some $n_0\geq 1$ and for all $n\geq n_0$, we can find $k_n \in \rosso{\{1,2,3,\dots,2^n\}}$ such that $\cK\in \cA_{n,k_n}$. 
 By definition of $\cA_{ n, k_n}$ we have
  $ \cK_x(1)=\cK_x(s)$ for all $s\in [ k_n 2^{-n}, 1]$ and $ \cK_x(k_n 2^{-n})- \cK_x((k_n-1)2^{-n})=1$. To simplify the notation let $a:= (k_n-1)2^{-n}$ and $b:= k_n 2^{-n}$. By the above properties \rosso{and since $\cK\in \G$}, $\cK_x$ has exactly one jump time  in $(a,b]$. Setting $c:= (a+b)/2$,
  $a<c<b$ are subsequent points in 
    $2^{-n-1} \bbN$. If the above jump time lies in $(c,b]$, then 
  it must be $k_{n+1} 2^{-n-1}=b= k_n 2^{-n}$ and $(k_{n+1} -1)2^{-n-1}=c>a=(k_n-1)2^{-n}$. If the above jump time lies in $(a,c]$, then 
  it must be $k_{n+1} 2^{-n-1}=c< k_n 2^{-n}=b$ and $(k_{n+1} -1)2^{-n-1}=a=(k_n-1)2^{-n}$.
  This proves that the 
  sequence $k_n 2^{-n}$
 is weakly decreasing in $[0,1]$, while  the sequence $(k_n-1)2^{-n}$ is weakly increasing in $[0,1]$. Since in addition the terms $(k_n-1)2^{-n}$ and $k_n 2^{-n}$ differ by $2^{-n}$, we obtain that $(k_n-1)2^{-n}$ converges to some $u\in [0,1]$ from the left and $k_n 2^{-n}$ converges to the  same $u\in [0,1]$ from the right. It cannot be $u=0$ otherwise it would be $(k_n-1)2^{-n}=0$ for $n\geq n_0$  and we would have 
 $\cK_{x,y}(2^{-n})-\cK_{x,y}(0)=1$ (due to the definition of $\cA_{n,k_n}$), thus contradicting the right continuity of $\cK_{x,y} $.    Hence $u>0$.
We claim that $\cK\in \cA$ and $u=t_1$. By taking the limit $n\to +\infty$ in the properties defining  $\cA_{n,k_n}$ and using that $\cK_{x,y} $, $\cK_x $ and $\cK_y$ are c\`adl\`ag, we get
$\cK_x(1)= \cK_x(u)$,
$ \cK_x(u)- \cK_x(u-)=1$,
$ \cK_{x,y}(u)- \cK_{x,y}(u-)=1$, $ \cK_{y}(u-)= \cK_y(0)$. This  implies that $\cK\in \cA$ and $u=t_1$.
 At this point we have proved our claim \eqref{carattere}. 

Having \eqref{carattere}, which involves countable intersections and unions, the measurability of $\cA$ follows from the measurability of $\cA_{n,k}$ (for the latter recall that the  maps $D_\bbN^S \ni \cK \to \cK_{x,y}(s)\in \bbN$   and  $D_\bbN^S \ni \cK \to \cK_{x}(s)\in \bbN$ are measurable).

When $y=x$, the analysis is the same. One has just to consider in addition the event $\{\cK\in \G\,:\, N_x(t)=0\}=\{\cK\in \G\,:\,\cK_x(0)=\cK_x(t)\}$, which is trivially measurable.
\end{proof}

\begin{Lemma}\label{peratoons}
Given $\cK \in \G$ and $x\in S$, the path $\bbR_+\ni t \mapsto X^x_t[\cK]\in S\cup\{\partial\}$ is c\`adl\`ag.
\end{Lemma}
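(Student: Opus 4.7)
The plan is to exploit that membership in $\G$ makes each path $\cK_x$ a c\`adl\`ag $\bbN$--valued map with only finitely many jumps on each compact interval (and unit jumps), so the jump times of $\cK_x$ are isolated. The key structural observation about Definition \ref{rumore} is that the algorithm depends on $t$ \emph{only} through the first input set $A_1=J_{[0,t]}(\cK_x)$: once $t_1$ and $x_1$ are produced, every subsequent set $A_{k+1}=J_{[0,t_k)}(\cK_{x_k})$ is determined entirely by the data $(t_k,x_k)$ and not by $t$. Hence if one can show that the pair $(t_1,x_1)$ (or the fact that $A_1=\emptyset$) is locally constant in $t$, then $X^x_t[\cK]$ is itself locally constant, including in the $\partial$--case where the sequence $t_1>t_2>\cdots$ is infinite.

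For right-continuity at $t$, I would argue as follows. Since $\cK_x$ has finitely many jumps in $[0,t+1]$, there exists $\d>0$ with no jump of $\cK_x$ in $(t,t+\d]$. For every $s\in[t,t+\d]$ we then have $J_{[0,s]}(\cK_x)=J_{[0,t]}(\cK_x)$, so $A_1$ and hence $(t_1,x_1)$ coincide with those produced at time $t$. By the structural observation, the entire sequence $(t_k,x_k)$ is identical, so $X^x_s[\cK]=X^x_t[\cK]$ on $[t,t+\d]$ (and this applies equally when the sequence is infinite, giving $\partial=\partial$).

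For existence of left limits at $t>0$, I would split into two cases according to whether $J_{[0,t)}(\cK_x)$ is empty. If it is empty, then for every $s\in[0,t)$, $A_1=\emptyset$ and $X^x_s[\cK]=x$, so the left limit is $x$. Otherwise, since $J_{[0,t)}(\cK_x)$ is finite, it has a maximum $\tilde t_1<t$ with an associated unique site $\tilde x_1$ such that $\tilde t_1\in J(\cK_{x,\tilde x_1})$. For all $s\in(\tilde t_1,t)$, $J_{[0,s]}(\cK_x)$ has maximum $\tilde t_1$, so the algorithm produces $(t_1,x_1)=(\tilde t_1,\tilde x_1)$ and then a sequence independent of $s$. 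Thus $X^x_s[\cK]$ is constant on $(\tilde t_1,t)$, so the left limit at $t$ exists.

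I do not foresee a real obstacle beyond being careful with two points. First, one must distinguish the half-open interval $[0,t_k)$ used from $A_2$ onward from the closed interval $[0,t]$ used for $A_1$, which is what makes the left limit well behaved even when $t$ itself is a jump time of $\cK_x$ (the left limit is determined by the \emph{previous} jump $\tilde t_1$, not by $t$). Second, one must treat the $\partial$--output uniformly with the $S$--output, which follows because the ``algorithm terminates after $m$ steps'' or ``runs forever'' status is determined by the sequence $(t_k,x_k)_{k\ge 1}$, which by the arguments above is constant on a suitable one-sided neighborhood of $t$.
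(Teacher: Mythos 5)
Your proposal is correct and follows essentially the same route as the paper's proof: both rest on the observations that the algorithm of Definition \ref{rumore} depends on $t$ only through $J_{[0,t]}(\cK_x)$, and that for $\cK\in\G$ the jump times of $\cK_x$ are locally finite, so this set is constant on $[t,t+\e]$ and on a left neighborhood $[t-\e,t)$. Your extra care with the half-open intervals and with the $\partial$--output only makes explicit points the paper leaves implicit.
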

\begin{proof}  We prove the c\`adl\`ag property at $t>0$, the case $t=0$ is similar. We fix  $\cK \in \G$, $x\in S$.
  Since $\cK\in \G$, $\cK_x$ is c\`adl\`ag and therefore $\cK_x$ has no jump in $(t,t+\e]$ and in $[t-\e,t)$ for $\e>0$ small enough.
  By the first step in the algorithm in Definition \ref{rumore} we conclude respectively  that $X^x_s[\cK]=X^x_t[\cK]$  for any $s\in [t,t+\e]$ and $X^x_{s}[\cK]=\rosso{X^x_{t-\e}[\cK]}$ for for any $s\in [t-\e,t)$.  
\end{proof}

 Let us point out an important  symmetry (called below \emph{reflection invariance}) of the law $\bbP$. 
 We define $\t_{x,y}^{(1)}<\t_{x,y}^{(2)}< \cdots $ the jump times of the random path $N_{x,y}(\cdot )$. Setting $\t_{x,y}^{(0)}
:=0$, we have that $\t_{x,y}^{(k)}- \t_{x,y}^{(k-1)}$, $k\geq 1$, are i.i.d. exponential random variables of parameter $c_{x,y}$. Another characterization is that $\{ \t_{x,y}^{(k)}\,:\, k \geq 1\}$ is a Poisson point process with intensity $c_{x,y}$. As a consequence, given $T>0$, the set $(0,T)\cap \{ \t_{x,y}^{(k)}\,:\, k \geq 1\}$ is a Poisson point process on $(0,T)$ with intensity $c_{x,y}$, i.e.~the numbers of points in disjoint Borel subsets of $(0,T)$ are independent random variables and the number of points in a given Borel subset $A$ of $(0,T)$ is a Poisson random variable with parameter $c_{x,y} $ times   the Lebesgue measure of $A$.   Due to the above characterization of the Poisson point process, we have that also $(0,T)\cap \{T- \t_{x,y}^{(k)}\,:\, k \geq 1\}$ is a Poisson point process on $(0,T)$ with intensity $c_{x,y}$.

Due to
the above reflection invariance,  the independence of the Poisson processes and the algorithm in Definition \ref{rumore},  we get that, by sampling $\cK$ with probability $\bbP$, the random variable  $ X^x_t[\cK]$ is distributed as 
the state at time $t$  of the random walk with conductances $c_{y, z}$  starting at $x$, with the convention that $ X^x_t[\cK]=\partial$ corresponds to the explosion of the above random walk.

The  above  observation will be crucial in proving Lemma \ref{mes1} below. We first give the following definition:

\begin{Definition}\label{zucchine} We define the set $\G_*\subset \G$ as $ \G_*:= \{\cK\in \G\,:\, X^x_t[\cK]\in S \text{ for all }x\in S, t\geq 0\}$.
\end{Definition}

\begin{Lemma}\label{mes1} The set $\G_*$ 
 is measurable  and $\bbP(\G_*)=1$.
  \end{Lemma}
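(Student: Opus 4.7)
The plan is to reduce the uncountable intersection in the definition of $\G_*$ to a countable one by exploiting the càdlàg property established in Lemma~\ref{peratoons}, and then to estimate the bad event using the reflection invariance of $\bbP$ together with Condition (C2).

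First I would observe that, for fixed $\cK\in\G$ and $x\in S$, Lemma~\ref{peratoons} actually shows the stronger statement that the path $t\mapsto X^x_t[\cK]$ is constant on some right-neighborhood of every $t\geq 0$. In particular, if $X^x_t[\cK]=\partial$, then $X^x_s[\cK]=\partial$ for all $s\in[t,t+\e]$ with $\e>0$ small enough. Consequently, a given $\cK\in\G$ satisfies $X^x_t[\cK]\in S$ for every $t\geq 0$ if and only if $X^x_q[\cK]\in S$ for every $q\in\bbQ_+$. Hence
\[
\G_* \;=\; \G \,\cap\, \bigcap_{x\in S}\bigcap_{q\in\bbQ_+}\bigcup_{y\in S}\bigl\{\cK\in\G\,:\,X^x_q[\cK]=y\bigr\}.
\]
By Lemma~\ref{mes0} each set $\{\cK\in\G:X^x_q[\cK]=y\}$ is measurable, and by Lemma~\ref{muset} so is $\G$; since $S$ and $\bbQ_+$ are countable this expresses $\G_*$ as a countable combination of measurable sets, so $\G_*\in\cB(D_{\bbN}^{\cE_S})$.

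For the probability statement, I would use subadditivity to bound
\[
\bbP(\G\setminus\G_*) \;\leq\; \sum_{x\in S}\sum_{q\in\bbQ_+}\bbP\bigl(\cK\in\G\,:\,X^x_q[\cK]=\partial\bigr).
\]
Now I invoke the reflection invariance of $\bbP$ and the algorithmic description in Definition~\ref{rumore}, recorded just before Lemma~\ref{mes1}: for each fixed $x\in S$ and $t\geq 0$, the law of $X^x_t[\cK]$ under $\bbP$ coincides with the distribution at time $t$ of the continuous-time random walk on $S$ with jump rates $c_{y,z}$ started at $x$, where $\{X^x_t[\cK]=\partial\}$ corresponds to the explosion event of that walk by time $t$. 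Condition (C2) then gives $\bbP(X^x_q[\cK]=\partial)=0$ for every $x\in S$ and $q\in\bbQ_+$, so $\bbP(\G\setminus\G_*)=0$ and the claim $\bbP(\G_*)=1$ follows from $\bbP(\G)=1$.

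The main conceptual step is the reduction from "for all $t\geq 0$" to "for all $q\in\bbQ_+$"; this is the only place where the algorithmic continuity properties of $X^x_{\cdot}[\cK]$ are used, and it is essential because Lemma~\ref{mes0} gives measurability only for fixed $t$. Once that reduction is in place, everything else is standard: a countable union argument for measurability and a countable-union-of-null-sets argument (via reflection invariance and (C2)) for the full measure statement.
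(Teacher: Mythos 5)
Your proposal is correct and follows essentially the same route as the paper: reduce the condition ``$X^x_t[\cK]\in S$ for all $t\geq 0$'' to rational times via the right-local constancy coming from Lemma \ref{peratoons}, use Lemma \ref{mes0} for measurability at fixed times, and conclude $\bbP(\G_*)=1$ from the reflection-invariance identification of the law of $X^x_t[\cK]$ with the random walk together with Condition (C2). The only cosmetic difference is that you phrase the full-measure statement as a union bound over countably many null sets, while the paper takes a countable intersection of full-measure sets $\G_t$, $t\in\bbQ\cap\bbR_+$.
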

 \begin{proof} Let $ \G_{\circ}:= \{\cK\in \G\,:\, X^x_t[\cK]\in S \text{ for all }x\in S, t\in \bbQ\cap \bbR_+\}$. By Lemma \ref{peratoons}, 
  $\G_{\circ}=\G_*$. 
 Due to Lemma \ref{mes0} the set $\G_t:=\{ \cK\in \G\,:\, X^x_t[\cK]\in S\;\; \forall x\in S\}$ is measurable.
    Due to  the above interpretation of the  distribution of $ X^x_t[\cK]$ in terms of the random walk with random conductances  and due to  Condition (C2),  we have   $\bbP(\G_t )=1$.   Then, since
 $\G_{\circ}$   is the countable intersection of the  measurable sets $\G_t$, $t\in \bbR_+\cap \bbQ$, we conclude that $\G_\circ$ is measurable and $\bbP(\G_\circ)=1$.  As $\G_{\circ}=\G_*$, the same holds for $\G_*$.
  \end{proof}
 
 \begin{Definition}\label{croato}
  For each $\cK\in \G_*$, $t\geq 0$ and $\s\in \{0,1\}^S$, we define $\eta^\s _t[\cK]\in \{0,1\}^S$ as 
 \be\label{def_fond}
\eta^\s_t [\cK] (x):= \s\bigl( X^x_t[\cK]\bigr) \,.
 \en
 \end{Definition}
 
 \rot{We note that, given $\cK\in \G_*$, it holds  $\eta^\s_0 [\cK] =\s$ by Remark \ref{sirialuna}. We refer to Figure \ref{fig2} for an example illustrating our definitions. }

\begin{figure}
\includegraphics[scale=0.18]{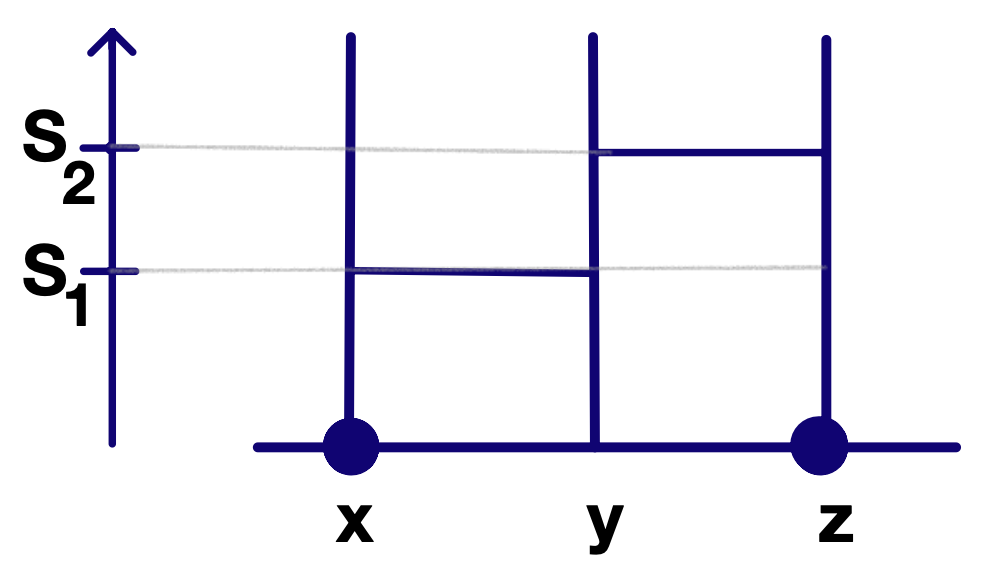}
\begin{tabular}{|c|c|c|c|}
\hline 
 &   $ 0\leq t <s_1$ & $s_1\leq t <s_2$ & $t=s_2$\\
\hline
$X^x_t[\cK]$ & $x$ & $y$ & $y$\\
\hline
$X^y_t[\cK]$ & $y$ & $x$ & $z$\\
\hline
$X^z_t[\cK]$ & $z$ & $z$ & $x$\\
\hline
$\eta^\s_t[\cK]$ & $(.., 1,0,1,..)$ & $(..,0,1,1,..) $ & $(..,0,1,1,..)$\\
\hline
 \end{tabular}
\caption{(Top) Horizontal edges represent the jump times of $\cK_{x,y}$ and $\cK_{y,z}$. (Bottom) Value of 
$X^x_t[\cK]$, $X^y_t[\cK]$, $X^z_t[\cK]$ and $\eta^\s_t[\cK]$  for $t\in [0,s_2]$.
} \label{fig2}
\end{figure}

%
 \begin{Lemma}\label{messi} Fixed $t\geq 0$, the map $ \{0,1\}^S \times \G_* \ni (\s,\cK) \mapsto  \eta^\s_t [\cK]  \in \{0, 1\}^S$ is continuous in $\s$ (for fixed $\cK$) and measurable in $\cK$ (for fixed $\s$).  
 \end{Lemma}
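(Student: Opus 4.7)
\textbf{Proof plan for Lemma \ref{messi}.}

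The key observation is that, for fixed $\cK\in\G_*$, the configuration $\eta^\s_t[\cK]$ depends on $\s$ only through the single-site values $\s(X^x_t[\cK])$, and that the map $\cK\mapsto X^x_t[\cK]$ has already been shown to be measurable in Lemma \ref{mes0}. Both claims of the lemma will follow essentially from these two facts together with the product structure of $\{0,1\}^S$ and its Borel $\s$-algebra.

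For the continuity in $\s$ with $\cK\in\G_*$ fixed, I would recall that convergence $\s_n\to\s$ in the metric \eqref{sarto} is equivalent to pointwise convergence $\s_n(y)\to\s(y)$ for every $y\in S$ (by dominated convergence on the series, or using that $\{0,1\}$ is discrete). Setting $y_x:=X^x_t[\cK]\in S$ (this is well defined precisely because $\cK\in\G_*$, cf.~Definition \ref{zucchine}), the definition \eqref{def_fond} yields $\eta^\s_t[\cK](x)=\s(y_x)$. Hence $\s_n\to\s$ implies $\eta^{\s_n}_t[\cK](x)=\s_n(y_x)\to\s(y_x)=\eta^\s_t[\cK](x)$ for each $x\in S$, which by definition of the product topology on $\{0,1\}^S$ gives $\eta^{\s_n}_t[\cK]\to\eta^\s_t[\cK]$.

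For the measurability in $\cK\in\G_*$ with $\s$ fixed, I would use that by separability of $\{0,1\}^S$ the Borel $\s$-algebra $\cB(\{0,1\}^S)$ is generated by the coordinate maps $\xi\mapsto\xi(x)$ (see Section \ref{sec_noto}). Therefore it suffices to check, for each $x\in S$, that $\G_*\ni\cK\mapsto\eta^\s_t[\cK](x)=\s(X^x_t[\cK])$ is measurable. This is the composition of the measurable map $\cK\mapsto X^x_t[\cK]$ from $\G_*$ to $S\cup\{\partial\}$ provided by Lemma \ref{mes0} with the function $y\mapsto\s(y)$; since $S\cup\{\partial\}$ carries the discrete topology, the latter is trivially Borel measurable, and the composition is measurable.

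I do not foresee a real obstacle: the two parts are essentially unwrapping the definition, the only subtlety being to verify that pointwise convergence on $S$ is the correct notion of convergence in the metric \eqref{sarto}, which is standard.
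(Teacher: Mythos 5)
Your proposal is correct and follows essentially the same route as the paper: continuity in $\s$ comes from the fact that $\eta^\s_t[\cK](x)=\s(X^x_t[\cK])$ depends on $\s$ only through finitely many (indeed single) sites of $S$ (you phrase it via sequences and pointwise convergence, the paper via a direct estimate in the metric \eqref{sarto}, which is the same idea), and measurability in $\cK$ reduces coordinate-wise to Lemma \ref{mes0}, your composition with $y\mapsto\s(y)$ being just a repackaging of the paper's countable union $\cup_{y:\s(y)=1}\{\cK\in\G_*: X^x_t[\cK]=y\}$. The only cosmetic point is that $\s(\partial)$ is undefined, but this is harmless since on $\G_*$ (measurable by Lemma \ref{mes1}) the map $X^x_t[\cK]$ takes values in $S$.
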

 \begin{proof} We prove the continuity in $\s$ for fixed $\cK$. Recall the metric $d(\cdot, \cdot)$ on $\{0,1\}^S$ defined in  \eqref{sarto}. 
  Fix $N\in \bbN_+$. We have that $d( \eta^\s_t [\cK], \eta^{\s'}_t [\cK])\leq 2^{-N}$ if $\eta^\s_t [\cK](s_n)= \eta^{\s'}_t [\cK](s_n)$ for all $n=1,2,\dots, N$. By \eqref{def_fond} this holds if $\s(y)=\s'(y)$ for all $y \in A:= \{  X^{s_n}_t[\cK]\,:\, n=1,2,\dots, N\}$. Let us now choose $M$ large enough that $A\subset \{s_1,s_2,\dots, s_M\}$.
  Since we have  $\s(s_n)=\s'(s_n)$  for all $n=1,2,\dots,M$ if $d(\s,\s')< 2^{-M}$, we conclude that whenever 
  $d(\s,\s')< 2^{-M}$ we have $\s(y)=\s'(y)$ for all $y \in A$ and therefore $d( \eta^\s_t [\cK], \eta^{\s'}_t [\cK])\leq 2^{-N}$. This proves the continuity in $\s$.
  
 We prove the measurability in $\cK$ for fixed $\s$.  The Borel $\s$--algebra $\cB(\{0,1\}^S)$ is generated by the sets $\{\eta\in \{0,1\}^S\,:\, \eta(x) =1\}$ as $x$ varies in $S$. Then to prove the measurability in $\cK$, for fixed $\s$, we just need to show that  $B:=\{\cK \in \G_*\,:\, \eta_t^\s[\cK](x) =1\}$ is measurable for any $x\in S$.  By \eqref{def_fond} we can rewrite $B$ as the countable union $\cup_{y\in \bbZ^d  \,:\s(y)=1} \{ \cK\in \G_*\,:\, X^x_t[\cK]=y\}$. Then the measurability of $B$ follows from the measurability of the sets $\{ \cK\in \G_*\,:\, X^x_t[\cK]=y\}$, which follows from   Lemmas \ref{mes0} and \ref{mes1}.
 \end{proof}
 For the next result recall that $D_{\{0, 1 \}^S}  = D(\bbR_+,\{0, 1 \}^S)$.
 \begin{Lemma}\label{pablito} For each $\s\in \{0,1\}$ and $\cK\in \G_*$, the path 
 $\eta^\s _\cdot[\cK] :=\bigl(  \eta^\s_t [\cK] \bigr)_{t\geq 0}$ belongs to $D_{\{0, 1 \}^S}$. Moreover, fixed $\s \in \{0,1\}^S$, the map
 \be\label{gallavotti}
  \G_* \ni \cK \mapsto  \eta^\s_\cdot [\cK]  \in D_{\{0,1\}^S} 
  \en
  is measurable in $\cK$.
 \end{Lemma}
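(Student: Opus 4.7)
The plan has two parts, corresponding to the two assertions of the lemma.

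For the path property, I would first argue coordinatewise. Fix $\cK \in \G_*$ and $x \in S$. By Lemma \ref{peratoons} the map $t \mapsto X^x_t[\cK]$ is c\`adl\`ag into $S \cup \{\partial\}$ with the discrete topology; since $\cK \in \G_*$ forces the value to lie in $S$ for every $t$, and since in a discrete space convergent sequences are eventually constant, any left limit must also lie in $S$ (otherwise $X^x_s[\cK]$ would equal $\partial$ on an interval). Hence $t \mapsto \eta^\s_t[\cK](x) = \s\bigl(X^x_t[\cK]\bigr)$ is c\`adl\`ag with values in $\{0,1\}$, and in particular is locally constant from the right. To lift this coordinatewise statement to the metric $d$ on $\{0,1\}^S$ defined in \eqref{sarto}, fix $t \ge 0$ and $\e > 0$, choose $N$ with $2^{-N} < \e$, and pick $\d > 0$ small enough that for each $n = 1, \dots, N$ one has $\eta^\s_{t+s}[\cK](s_n) = \eta^\s_t[\cK](s_n)$ for all $0 \le s < \d$. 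Then
\[
d\bigl(\eta^\s_{t+s}[\cK], \eta^\s_t[\cK]\bigr) \;\le\; \sum_{n > N} 2^{-n} \;=\; 2^{-N} \;<\; \e,
\]
proving right-continuity in the product metric. Existence of left limits in $(\{0,1\}^S, d)$ follows analogously, defining the candidate limit coordinatewise (using the coordinatewise left limits guaranteed above) and using the same geometric tail bound to upgrade from coordinatewise to metric convergence.

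For the measurability of the map \eqref{gallavotti}, recall from Section \ref{sec_noto} that $\cB(D_{\{0,1\}^S})$ is generated by the family of coordinate maps $\xi_\cdot \mapsto \xi_t(x)$ as $(t,x)$ ranges over $\bbR_+ \times S$. It therefore suffices to check, for each fixed $(t,x)$, that $\G_* \ni \cK \mapsto \eta^\s_t[\cK](x) \in \{0,1\}$ is measurable. By Definition \ref{croato} this map is the composition of $\cK \mapsto X^x_t[\cK]$, measurable by Lemma \ref{mes0}, with $\s : S \to \{0,1\}$; the latter is automatically measurable since $S$ carries the discrete topology, so the composition is measurable.

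The only step requiring any actual work is the upgrade from coordinatewise c\`adl\`ag to c\`adl\`ag in the product metric $d$, and even that is routine because the geometric weights in \eqref{sarto} make the tail beyond any fixed index uniformly small. Both assertions of the lemma thus reduce cleanly to the preparatory Lemmas \ref{peratoons} and \ref{mes0}.
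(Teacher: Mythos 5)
Your argument is correct and follows essentially the same route as the paper: the c\`adl\`ag property is reduced coordinatewise via the metric \eqref{sarto} and Lemma \ref{peratoons} (with the same observation that $\cK\in\G_*$ keeps the left limits of $X^x_\cdot[\cK]$ in $S$), and the measurability is reduced to the maps $\cK\mapsto\eta^\s_t[\cK](x)$ through the generating coordinate maps. Your composition argument with Lemma \ref{mes0} (together with the measurability of $\G_*$ from Lemma \ref{mes1}) is exactly the content of the measurability part of Lemma \ref{messi}, which the paper invokes at this point.
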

 \begin{proof}  Let $\s\in \{0,1\}^S$ and $\cK\in \G_*$.
 We first check that the path 
 $\bigl(  \eta^\s_t [\cK] \bigr)_{t\geq 0}$ belongs to $D_{ \{0, 1 \}^S}$.   
 We first prove its right continuity, i.e.~given $t\geq 0$ we show that $\lim_{u\downarrow t} \eta^\s_u [\cK]=\eta^\s_t [\cK]$. Due to \eqref{sarto} we just need to show that  $\lim_{u\downarrow t} \eta^\s_u [\cK](x)=\eta^\s_t [\cK](x)$ for any $x\in S$.  By \eqref{def_fond} it is enough to show that, for any $x\in S$,  $X^x_u[\cK]= X^x_t[\cK]$ for any $u>t$ sufficiently near to $t$. This follows from Lemma \ref{peratoons}. 
 We now  prove that the path  $\eta^\s _\cdot[\cK] $ has limit from the left at $t>0$. 
 Given $x\in S$,  
  by Lemma \ref{peratoons}, the left limit $X^x_{t-}[\cK]$ is well defined. Moreover, since $\cK\in \G_*$, this limit is in $S$. Let $\xi(x):= \s( X^x_{t-}[\cK])\in \{0,1\}$ for any $x\in S$. We claim that $\lim_{u\uparrow t} \eta^\s_u [\cK]=\xi$. Again, due to \eqref{sarto}, we just need to show that  $\lim_{u\uparrow t} \eta^\s_u [\cK](x)=\xi(x)$, i.e.
  $\lim_{u\uparrow t} \s( X^x_u [\cK])=\xi(x)= \s( X^x_{t-}[\cK])$,
     for any $x\in S$.  This follows from the fact that, for each $x\in S$, $X^x_u [\cK]=
     X^x_{t-} [\cK]$ for all $u\in [t-\e,t)$, for $\e>0$ small enough. This concludes the proof that 
     $\bigl(  \eta^\s_t [\cK] \bigr)_{t\geq 0}$ belongs to $D_{ \{0, 1 \}^S}$.  
     
 
 As  discussed at the end of Section \ref{sec_noto}, $\cB( D_{\{0,1\}^S})$ is generated by the maps $\eta_\cdot \mapsto  \eta_t $ with $t$ varying in $\bbR_+$. This means that $\cB( D_{\{0,1\}^S})$  is the smallest $\s$--algebra such that the sets of the form  $\cU:=\{ \eta_\cdot \in D_{\{0,1\}^S}\,:\, \eta_t \in B\}$, with $B\in \cB (\{0,1\}^S)$, are measurable. To prove the measurability of \eqref{gallavotti}   in $\cK$ for fixed $\s$, we just need to prove that the inverse image of $\cU$ via \eqref{gallavotti} is  measurable, i.e.~that the set  $\{\cK\in \G_*\,:\,  \eta^\s_t [\cK]  \in B\}$ is measurable. But this is exactly the measurability in $\cK$ (for fixed $\s$) of the map  in Lemma \ref{messi}.
 \end{proof}


\subsection{Proof of Proposition \ref{costruzione_SSEP}}\label{avatar} $\,$

$\bullet$ Item  (i)  \rot{holds since, as already observed, given $\cK\in \G_*$ it holds  $\eta^\s_0 [\cK] =\s$ by Remark \ref{sirialuna}. } 
 

$\bullet$  We move to Item (ii). We will use the Dynkin's $\pi$-$\l$ Theorem (see e.g.~\cite{Du})     as in the proof of Item (b) in   \cite[Theorem~2.4]{timo}. 
 We fix $0 \leq t_1\leq t_2 \leq \cdots \leq t_n$, $x_1,x_2,\dots, x_n \in S$ and $k_1,k_2,\dots, k_n \in \{0,1\}$. Due to Lemma \ref{messi}, for fixed $\cK\in  \G_*$, the maps 
 $\{0,1\}^S \ni \s\mapsto  \eta^\s_{t_i}[\cK](x_i)\in \{0,1\}$ are continuous.
Therefore, \rosso{also} 
\[
f_\s(\cK):= \prod _{i=1}^n \left(1-\big|  \eta^\s_{t_i}[\cK](x_i)-k_i\big| \right)
\] is continuous in $\s$ for each fixed $\cK$. Trivially 
$f_\s(\cK)=1$ if and only if $ \eta^\s_{t_i}[\cK](x_i)=k_i$ for all $i=1,2,\dots,n$, otherwise $f_\s(\cK)=0$.
  We then consider the set  
  $A:=\{\, \eta_\cdot \in D_{ \{0,1\}^S}\,:\, \eta_{t_i}(x_i)=k_i \; \forall i=1,2,\dots,n\,\}$. Note that $A\in \cF$ as $\cF$ is generated by the coordinate maps, \rosso{moreover $f_\s(\cK)$ is measurable in $\cK$ by Lemma \ref{messi}}.   
 Since $\bbP^\s(A)= \int d\bbP(\cK) f_\s(\cK)$ and $0 \leq f_\s(\cK) \leq 1$, by dominated convergence and the above continuity in $\s$ of $f_\s(\cK)$, we get that the map $\{0,1\}^S\ni \s\mapsto \bbP^\s( A) \in [0,1]$ is continuous.

 
  We now consider the family $\cL$  of sets $A\in \cF$ such that the map $\{0,1\}^S\ni \s\mapsto \bbP^\s( A) \in [0,1]$   is measurable. Due to the above discussion, $\cL$ contains the family $\cP$ given by events of the form $\{\, \eta_\cdot \in D_{ \{0,1\}^S}\,:\, \eta_{t_i}(x_i)=k_i \; \forall i=1,2,\dots,n\,\}$ with $n\geq 1 $, $0 \leq t_1\leq t_2 \leq \cdots \leq t_n$, $x_1,x_2,\dots, x_n \in S$ and $k_1,k_2,\dots, k_n \in \{0,1\}$. 
    The above family $\cP$ is a $\pi$--system, i.e.~$A\cap B \in \cP$ if $A,B\in \cP$. We claim that, on the other hand, $\cL$ is a $\l$--system, i.e.~(a) $D_{\{0,1\}^S}\in \cL$, (b) if $A,B\in \cL$ and $A\subset B$, then $B\setminus A\in \cL$, (c) if $A_n\in \cL$ and $A_n \uparrow A$ (i.e.~$A_1\subset A_2\subset \cdots$ and $\cup_n A_n=A$), then $A\in \cL$. Before justifying our claim, let us conclude the proof of Item (ii). By Dynkin's $\pi$-$\l$ Theorem, $\cL$ contains the $\s$--algebra generated by $\cP$, which is indeed $\cF$ as discussed at the end of Section \ref{sec_noto}. Hence $\cL=\cF$, thus corresponding to Item (ii).
  
  We conclude by proving our claim.
The check of (a) and (b) is trivial. Let us focus on (c). Let $A_n\in \cL$ with $A_n \uparrow A$. We need to prove that $A\in \cL$.  We first observe that for each path $\eta_\cdot \in D_{ \{0,1\}^S}$ it holds $ \mathds{1}_{A_n}( \eta_\cdot)\to \mathds{1}_A(\eta_\cdot)$ as $n\to +\infty$  \rot{by definition of $A_n \uparrow A$}. This implies that, for each $\s\in \{0,1\}^S$ and $\cK\in \G_*$,  $g_n(\cK) \to g(\cK)$ as $n\to +\infty$, where 
$g_n(\cK):= \mathds{1}_{A_n} \big( 
\bigl( \eta^\s_t [\cK] \bigr)_{t\geq 0} \big)$ and $g(\cK):= \mathds{1}_{A} \big( 
\bigl( \eta^\s_t [\cK] \bigr)_{t\geq 0} \big)$. Then, by  dominated convergence, $\bbP^\s(A_n)= \int d \bbP( \cK) g_n(\cK)  \to\int d \bbP( \cK) g(\cK)=   \bbP^\s(A)$ as $n\to +\infty$, for any $\s\in\{0,1\}$. Since $A_n\in \cL$ the map $\{0,1\}^S\ni \s\mapsto \bbP^\s( A_n) \in [0,1]$ is measurable. As a byproduct \rot{of} the above limit,  we get that the map $\{0,1\}^S\ni \s\mapsto \bbP^\s(A)\in [0,1]$ is the pointwise limit of measurable functions, and therefore it is measurable. This concludes the proof of (c).

$\bullet$
 We now focus on Item (iii).  The proof is very close to the one of Item  (c) in   \cite[Theorem~2.4]{timo}, \rosso{although the graphical construction is different}.  Given $\cK\in D_{\bbN}^{\cE_S}$ and $t\geq 0$, we define $\theta_t \cK$ as the element of $D_{\bbN}^{\cE_S}$ such that $(\theta _t \cK)_{x,y} (s):= \cK_{x,y} (t+s)-\cK_{x,y}(\rot{t})$. \rot{We note that $\theta_t \cK\in \G_*$ for any $\cK\in \G_*$}. We also denote by $\cK_{[0,t]}$ the collection of functions $[0,t]\ni s \mapsto  \cK_{x,y}(s)\in \bbN$  as $\{x,y\}$ varies in $\cE_S$. We claim that for all $ \cK \in \G_*$ and $t,s \geq 0$  it holds
 \be\label{andreino1}
 \eta ^\s_{t+s}[\cK]= \eta ^\xi _s [\theta _t \cK] \qquad \xi:= \eta ^\s_t [\cK]\,.
 \en
 To check the above identity, observe that by the graphical construction $X_{t+s}^x[\cK]=X_t^{y}[ \cK]$ where $ y:=X_s^x [\theta_t \cK]$, and therefore (defining $\xi$ as in  \eqref{andreino1}) 
 \[
  \eta ^\s_{t+s}[\cK](x)= \s( X_{t+s}^x[\cK])= \s( X_t^{y}[ \cK])= \eta^\s_t[\cK](y)=\xi (y)= \xi (X_s^x [\theta_t \cK])= \eta ^\xi _s [\theta _t \cK] \,.
  \]
 Take now $A\in \cF$ and $B\in \cF_t$. We can think of $B$ as a subset of $D([0,t], \{0,1\}^S)$. We set $\eta_{[0,t]}:= (\eta_s)_{0\leq s\leq t}$. Then, using \eqref{andreino1},  
 \be
 \begin{split}
 & \bbP^\s\left( \eta_{t+\cdot} \in A, \eta _{[0,t]}\in B\right)=\int _{\G_*}d\bbP (\cK)\mathds{1}_A\left( \eta^\s _{t+\cdot} [\cK]\right) \mathds{1} _B\left( \eta ^\s_{[0,t]}[\cK]\right) 
 \\
 &= \int _{\G_*} d\bbP (\cK)\mathds{1}_A\left( \eta^{\eta^\s_t[\cK]} _{\cdot} [\theta_t \cK]\right) \mathds{1} _B\left( \eta ^\s_{[0,t]}[\cK]\right) \,.
  \end{split}
 \en
 Since $\mathds{1}_B\bigl( \eta^\s_{[0,t]}[\cK]\bigr) $ and $\eta^\s_t[\cK]$ depend only on  $\cK_{[0,t]}$, which is independent under $\bbP$ from $\theta_t \cK$ and since $\theta_t \cK$ and $\cK$ have the same law under $\bbP$, we have  
 \begin{multline*}
  \int _{\G_*} d\bbP (\cK)\mathds{1}_A\left( \eta^{\eta^\s_t[\cK]} _{\cdot} [\theta_t \cK]\right) \mathds{1} _B\left( \eta ^\s_{[0,t]}[\cK]\right) 
\\
=
 \int _{\G_*} d\bbP (\cK)\int _{\G_*} d\bbP(\cK') \mathds{1}_A\left( \eta^{\eta^\s_t[\cK]} _{\cdot} [\cK']\right) \mathds{1} _B\left( \eta ^\s_{[0,t]}[\cK]\right)   
 =\bbE^\s\left[ \mathds{1} _B\left( \eta _{[0,t]}\right)\rosso{ \bbP^{ \eta_t}(A) } \right]\,.
 \end{multline*} 
 By collecting our observations we get 
 \[\bbP^\s\left( \eta_{t+\cdot} \in A, \eta _{[0,t]}\in B\right)=\bbE^\s\left[ \mathds{1} _B\left( \eta _{[0,t]}\right) \rosso{\bbP^{ \eta_t}(A)} \right] \qquad \forall A\in \cF\,, \; \forall B\in \cF_t\,.
 \]
 The above family of identities leads to  Item (iii).

\subsection{Proof of Proposition  \ref{feller_SSEP}}\label{linus}
Since $\{0,1\}^S$ is compact, $f$ is uniformly bounded. Fixed $\s\in \{0,1\}^S$ the map 
$F_\s:\,\G_*\ni \cK \mapsto  f\big( \eta^\s_t [\cK] \big) \in \bbR$ is measurable (as composition of measurable functions, see Lemma \ref{messi}) and  is bounded in modulus by $\|f\|_\infty$. Fixed $\xi $ and a sequence $(\xi_n)$ in  $ \{0,1\}^S$ with $\xi_n \to \xi$, due to the continuity in $\s$ of $\eta^\s_t[\cK]$ (see Lemma \ref{messi}) and due to the continuity of $f$,  we have  $F_{\xi_n} (\cK) \to F_\xi (\cK)$ as $n\to+\infty$  for all $\cK\in \G_*$. Then, by dominated convergence, we get that $\big( S (t) f\big) (\xi_n)=\bbE[ F_{\xi_n}]\to \bbE[F_\xi]=\big( S(t) f\big) (\xi)$ as $n\to +\infty$.

\subsection{Proof of Proposition \ref{inf_SSEP}}\label{manfred}
  Let $f : \{0,1\}^S \to \bbR$ be a local function and take $A\subset S$ finite such that $f(\eta)$ is defined in terms only of $\eta(x)$ with $x\in A$.  We set $\cE(A):=\{\{x,y\} \in S\,:\, \{x,y\} \cap A \not = \emptyset\}$. By (C1) we have 
   \be\label{stimetta}c_A  := \sum _{\{x,y\} \in \cE (A)} c_{x,y} \leq 
 \sum_{x\in A} \sum _{y \in S} c_{x,y} = 
 \sum _{x\in A}c_x <+\infty\,.
 \en
 From the above bound and since $\|f\|_\infty<+\infty$ it is simple to prove that 
  the r.h.s.'s of \eqref{mammaE} and \eqref{mahmood} are absolutely convergent series in $C(\{0,1\}^S)$ defining the same function, that we denote by $\hat \cL  f$. Hence we just need to prove that $\cL f= \hat \cL f$.
   
\rot{From now on $\cK$ is assumed to be in $\Gamma_*$ (see Definition \ref{zucchine}), which has $\bbP$-probability one.}
 Since $\cK_A(t):= \sum_{\{x,y\} \in \cE(A)} \cK_{x,y}(t)$, with $\cK$ sampled by $\bbP$,  is a Poisson random variable with finite  parameter  $c_A$, it holds 
  \be\label{duino}
  \bbP ( \cK_A(t) \geq 2) = 1- e^{-c_A t} (1+ c_A  t) \leq C t^2\,.
  \en

Recall that $S=\{s_1,\,s_2, \dots\}$. We introduce on $S$ the total order $\preceq$ such that $s_i \preceq s_j$ if and only if $i\leq j$.
 When  $\cK_A(t)=1$, we define  the pair  $\{x_0,y_0\}$ as the only edge in $\cE_A$ such that $\cK_{x_0,y_0}(t)=1$, with the rule that we call  $x_0$ the minimal (w.r.t. $\preceq$) point in  $\{x_0,y_0\}\cap A$. This rule is introduced in order 
  to have a univocally defined labelling of the points in the above edge.
  
  Recall \eqref{venzone}. We observe that   
  $\cK_A(t)=1$ implies that $\cK_e(t) =0 $ for all $e\in \cE(A)$ with $e\not =\{x_0,y_0\}$, $ \cK_{x_0,y_0}(t)=1$, $\cK_{x_0}(t)=1$, $\cK_{x}(t) =0$ for all $x\in A\setminus\{x_0,y_0\}$ and  $\cK_{y_0}(t) \geq 1$.  $\cK_A(t)=1$   implies also that $\cK_{y_0}(t)=1$ if $y_0\in A$.
 Let $H :=\{ \cK_A(t)=1\}\cap \{ \cK_{y_0}(t)=1\}$.   By the above observations and   the graphical construction in Definition \ref{rumore},   $H$ also implies  that 
 $X^x_t[\cK]=x$ for $x\in A\setminus \{x_0,y_0\}$, $X^{x_0}_t[\cK]=y_0$ and 
 $X^{y_0}_t[\cK]=x_0$.   Hence, $\eta_t ^\s[\cK]=\s^{x_0,y_0}$ on $A$  when $H$ occurs.
 
As already observed, if $\cK_{A}(t)=1$ and $y_0\in A$,  then 
  $H$ must occur.
 Hence 
 \be\label{polo}
 \{ \cK_{A}(t)=1\}\setminus H=\{ \cK_A(t)=1,\; \cK_{y_0}(t) >1,\;y_0\not \in A\}\,.
 \en
 We claim that, as $t\downarrow 0$,
 \be\label{circolo}
 \bbP(\{ \cK_{A}(t)=1\}\setminus H)=o(t)\,.
 \en To prove our claim
 we estimate
 \be\label{mare}
 \begin{split}
& \bbP(
 \cK_A(t)=1,\; \cK_{y_0}(t) >1,\;y_0\not \in A)
 \\
 &  \leq \sum_{x\in A} \sum _{y \in S\setminus A}  \bbP (  \cK_{x,y}(t)=1\,, \sum_{z\in  S \setminus (A\cup\{y\})} \cK_{y,z}(t) \geq 1) \\
& \leq  t \sum_{x\in A} \sum _{y \in S } c_{x,y}   e^{- c_{x,y} t}  (1- e^{-c_y  t})\,.
\end{split}
 \en
Note that in the last  bound we have used that $\sum_{z\in  S \setminus (A\cup\{y\})} \cK_{y,z}(t)$ is a Poisson random variable with parameter $\sum_{z\in  S \setminus (A\cup\{y\})} c_{y,z}\leq c_y$, which is independent from $\cK_{x,y}(t)$.  
  By \eqref{stimetta} and the  dominated convergence theorem  applied to the r.h.s. in \eqref{mare},  we get that the r.h.s.  of \eqref{mare} is $o(t)$. \rot{By combining this result with \eqref{polo}, we get \eqref{circolo}}.

Using that  $\|f\|_\infty <+\infty$, $f( \eta^\s _t [ \cK])= f(\s)$ when $\cK_A(t)=0$, \eqref{duino} and \eqref{circolo},  we can write
\be\label{beggins}
\begin{split}
& S(t) f (\s) -f(\s) = 
\bbE\left [ \left( f( \eta^\s _t [ \cK])- f(\s)\right) \mathds{1}_H\right]+ o(t)\\
& =\sum_{\{x,y\}\in \cE(A)} \left( f(\s^{x,y})-f(\s) \right) \bbP( H, \{x_0,y_0\}= \{x,y\} ) + o(t)\,.
\end{split}
\en
Above, to simplify the notation, for the intersection of events we used the comma instead of the symbol $\cap$  (we keep the same convention also below).

If   in \eqref{beggins} we replace $ \bbP( H , \{x_0,y_0\}= \{x,y\} ) $ by $\bbP( \cK_A(t)=1, \{x_0,y_0\}= \{x,y\} )$, the global error is of order   $ o(t)$. Indeed, since the first  event is included in the second one, we have 
\begin{equation*}
\begin{split}
& \sum_{\{x,y\}\in \cE(A)}  \big| \bbP( \cK_A(t)=1, \{x_0,y_0\}= \{x,y\} )- \bbP( H , \{x_0,y_0\}= \{x,y\} )\big| \\
& = \sum_{\{x,y\}\in \cE(A)} \bbP( \{ \cK_A(t)=1\} \setminus H, \{x_0,y_0\}= \{x,y\} )=\bbP( \{ \cK_A(t)=1\} \setminus H )\,,
\end{split}
\end{equation*}
and the last expression is of order $o(t)$ due to \eqref{circolo}.
By making the above replacement we have 
\be\label{beggins2}
\begin{split}
& S(t) f (\s) -f(\s) \\
&=\sum_{\{x,y\}\in \cE(A)} \left( f(\s^{x,y})-f(\s) \right)  \bbP( \cK_A(t)=1, \{x_0,y_0\}= \{x,y\} ) + o(t)\\
&=t \sum_{\{x,y\}\in \cE(A)} \left( f(\s^{x,y})-f(\s) \right)   c_{x,y} e^{-c_{x,y} t}  \exp\Big\{- t \sum _{ 
\substack{   \{x',y'\}\in \cE(A):\\ \{x',y'\}\not= \{x,y\}}
} c_{x',y'} \Big\}+ o(t)\\
&= t \sum_{\{x,y\}\in \cE(A)} \left( f(\s^{x,y})-f(\s) \right) c_{x,y} e^{-c_A t} +o(t)
 \,.
\end{split}
\en
We apply  the dominated convergence theorem to the measure  on $\cE(A)$ giving weight $c_{x,y}$ to $\{x,y\}$  and to the $t$--parametrized functions 
$ F_t(\{x,y\}):=\left( f(\s^{x,y})-f(\s) \right) \left[e^{- c_A t}-1\right]$.
The above functions are dominated by the constant function $2\|f\|_\infty$, which  is integrable as $\sum_{\{x,y\}\in \cE(A)}   c_{x,y}=c_A<+\infty$. By dominated convergence we conclude  
that  $\lim_{t\downarrow 0} \sum_{\{x,y\}\in \cE(A)} c_{x,y} F_t(\{x,y\})=0$. By combining this observation with \eqref{beggins2} we conclude that 
\begin{equation*}
\begin{split}
S(t) f (\s) -f(\s)& =t \sum_{\{x,y\}\in \cE(A)} \left( f(\s^{x,y})-f(\s) \right)   c_{x,y} +  o(t)\\
& =t \sum_{\{x,y\}\in \cE_S} \left( f(\s^{x,y})-f(\s) \right)   c_{x,y} +  o(t) = t \hat \cL f (\s) 
+ o(t)
\,.
\end{split}
\end{equation*}
The above expression implies that $f\in \cD(\cL)$ and that $\cL f=\hat \cL f$.
\subsection{Proof of Proposition \ref{eastpak}}\label{dim_eastpak}
 \rot{We first point out  a property of $\vertiii{\cdot}$ frequently used also in the rest:
 if  $h\in C(\{0,1\})^S$ and  $\eta, \xi \in \{0,1\}^S$ satisfy $\eta(x)=\xi(x)$ for all $x\in A$  for some  $A\subset S$ (possibly  $A=\emptyset$), then  it holds 
 \be\label{eccolo8}
 | h(\eta) -h(\xi)| \leq \sum_{x\in S\setminus A} \D_h(x)\,.
 \en
 The proof of \eqref{eccolo8} is standard.  Indeed, define $\s^{(n)} \in \{0,1\}^S$, $n\in \bbN$,  by setting $\s^{(n)}(s_k)=\eta(s_k)$ if $k\leq  n$ and $\s^{(n)}(s_k)=\xi(s_k)$ if $k> n$ (recall that $S=\{s_1,s_2,\dots\}$). Then $\s^{(0)}=\xi$
   and $ \s^{(n)} \to \eta$ as $n\to +\infty$, thus implying that $h(\eta)=\lim_{n\to +\infty } h(\s^{(n)})$ due to the continuity of $h$ and therefore $| h(\eta) -h(\xi)| \leq \sum_{k=0}^\infty | h(\s^{(k+1)})-h(\s^{(k)})| $. To conclude the proof of \eqref{eccolo8} it is enough to observe that $\s^{(k+1)}$ and $\s^{(k)}$ differ at most at  $s_{k+1}$ (thus implying that $| h(\s^{(k+1)})-h(\s^{(k)})| \leq \D_h (s_{k+1})$) and are equal if $s_{k+1}\in A$.}

\rot{Let now $f$ be as in Proposition \ref{eastpak}.    We set $T_{x,y} f (\eta):=f(\eta^{x,y})-f(\eta)$. 
  Since  $|f(\eta^{x,y})-f(\eta)|\leq \D_f(x)+\D_f(y)$ by \eqref{eccolo8}, we get $\| T_{x,y} f \|_\infty \leq \D_f(x)+\D_f(y)$.
  Then,  since $\vertiii{f}_*<+\infty$, it holds 
  \be\label{bici88} \sum_{\{x,y\}\in \cE_S} c_{x,y} \|T_{x,y} f\|_\infty \leq \sum_{\{x,y\}\in \cE_S} c_{x,y}(\D_f(x)+\D_f(y))=\sum_{x\in S} c_x \D_f(x)<+\infty\,.\en
  This proves the absolute convergence of  $ \sum _{\{x,y\}\in \cE_S} c_{x,y} \bigl[ f(\eta^{x,y})-f(\eta) \bigr]$ as series of functions in $C(\{0,1\}^S)$. Let us call $h(\eta)$ the resulting function. }

 \rot{Since $\cL$ is a Markov generator,  its graph is  closed in $C(\{0,1\}^S)\times C(\{0,1\}^S)$  (cf.~\cite[Chapter~1]{L1}). Hence, to conclude, it is enough to exhibit a sequence of local functions $f_n$ such that $\| f_n -f\|_\infty\to 0$ and $\|  \cL f_n - h\|_\infty \to 0$ as $n\to +\infty$ (indeed this implies that $f\in \cD(\cL)$ and $h=\cL f$). To this aim, given $\eta \in \{0,1\}^S$, we define    $\eta^{(n)}(x)=\eta(x)$ if $x\in A_n:=\{s_1,s_2, \dots, s_n\}$ and  $\eta^{(n)}(x)=0$ otherwise. We also set $f_n(\eta):= f( \eta^{(n)})$. Trivially $f_n$ is a local function. Moreover,  by \eqref{eccolo8},
 $\| f- f_n\|_\infty  \leq \sum _{x\not \in A_n}  \D_f (x) $, which goes to zero as $n\to +\infty$ since $\vertiii{f}<+\infty$.  It remains to  prove  that 
 $\|  \cL f_n - h\|_\infty \to 0$.  To this aim we fix $\e>0$. Due to \eqref{bici88} and since $\vertiii{f}_*<+\infty$, we can fix $N$ such that
 \be \label{ululato1}
 \sum_{\{x,y\} \in \cE_S:\\
 \{x,y\}\not \subset A_{N} } c_{x,y} \|T_{x,y} f\|_\infty 
 \leq \e\,.
 \en 
 Note that $\| T_{x,y} f_n \|_\infty \leq \| T_{x,y} f \|_\infty$, hence \eqref{ululato1} holds also with $f_n$ instead of $f$ for any $n=1,2,\dots$.
 As a consequence and due to the series representation of $h$ and $\cL f_n$ (cf.~\eqref{mahmood}) we conclude that 
 \be\label{FR1}
 \begin{split}
 \|h -\cL f_n \|_\infty &  \leq 2 \e+
 \| \sum_{\{x,y\}\in \cE_S: \{x,y\}  \subset A_N } c_{x,y}  \left(T_{x,y} f -T_{x,y}f_n\right) \|_\infty\\
 &\leq 2 \e+
 \sum_{\{x,y\}\in \cE_S: \{x,y\} \subset A_N } c_{x,y} \|  T_{x,y} f  -  T_{x,y}f_n  \|_\infty\,.
 \end{split}
 \en
 For $n \geq N$ and $\{x,y\}\subset A_N\subset A_n$ we have that $\eta^{x,y}$ and $(\eta^{(n)})^{x,y}$ coincide on $A_n$ and also $\eta$ and $\eta^{(n)}$ coincide on $A_n$ and therefore by \eqref{eccolo8}
\be\label{FR2}
|T_{x,y} f(\eta)- T_{x,y} f^{(n)}(\eta)|\leq | f(\eta^{x,y})-f((\eta^{(n)})^{x,y})|+ | f(\eta)-f(\eta^{(n)})| \leq 2 \sum _{z\not \in A_n} \D_f(z) \,.
\en
By combining \eqref{FR1} and \eqref{FR2} and setting $C(N):=\sum_{\{x,y\}\in \cE_S: \{x,y\} \subset A_N } c_{x,y} $,  we conclude that $ \|h -\cL f_n \|_\infty  \leq 2 \e+ 2 C(N) \sum _{z\not \in A_n} \D_f(z) $ for $n\geq N$. Hence, by the bound $\vertiii{f}<+\infty$, $\limsup_{n\to +\infty}  \|h -\cL f_n \|_\infty  \leq 2\e$. By the arbitrariness of $\e$ we conclude that  $\lim_{n \to +\infty} \|h -\cL f_n \|_\infty =0$, thus proving the lemma.}

\begin{Remark}\label{batteria1000}
\rot{For later use we stress that, in the proof of Proposition \ref{eastpak}, for  any $f$ with 
$\vertiii{f}<+\infty$ and $\vertiii{f}_*<+\infty$   we have built a sequence $f_n \in \cC\subset \cD(\cL) $ such that $\|f-f_n\|_\infty\to 0$ and $\|\cL f -\cL f_n \|_\infty \to 0$ as $n\to +\infty$.}
\end{Remark}
\subsection{Proof of Proposition \ref{volpino}}\label{tana_volpino}
\rot{We set $\cW:=\{f\in C(\{0,1\}^S)\,:\, \vertiii{S(t)f}<+\infty \text{ and } \vertiii{S(t)f}_*<+\infty \;\forall t \in \bbQ_+\}$.
We start showing that if $\cW$ is a core then also $\cC$ is a core. If $\cW$ is a core, then   for any $h\in \cD(\cL)$ there exists a sequence $h_n \in \cW\subset \cD(\cL)$ such that $\| h_n - h\|_\infty \to 0 $ and $\| \cL h_n - \cL h\|_\infty \to 0$ as $n\to +\infty$. To prove that $\cC$ is a core, it is then enough to show that for any $f\in \cW$ there exists a sequence $f_n \in \cC$ such that $\| f_n - f\|_\infty \to 0 $ and $\| \cL f_n - \cL f\|_\infty \to 0$. This follows from the proof of Proposition \ref{eastpak} and in particular from Remark \ref{batteria1000}.}

\rot{Now it remains to  show that $\cW$ is a core for $\cL$.
 By a slight modification of \cite[Proposition~3.3]{EK}  it is enough to show the following: (i)
  $\cC $ is dense in $C(\{0,1\}^S)$; (ii) $\cC \subset \cW \subset \cD(\cL)$ and (iii) $S(t) f \in \cW$ for any $f\in \cC$ and $t\in \bbQ_+$. The difference with  \cite[Proposition~3.3]{EK}  is that in (iii)  we require that $S(t) f\in \cW$ only for $t\in \bbQ_+$ and not for all $t\in \bbR_+$.
  The reader can easily check that the short proof of \cite[Proposition~3.3]{EK}  still works. Indeed, the invariance of $\cW$ is used in the proof only to show that, given $f\in \cC$, 
  $ f_n:= \frac{1}{n}\sum_{k=0}^{n^2} e^{-\l k/n} S(k/n)f $ belongs to $\cW$.}

   \rot{Let us check the above properties (i), (ii) and (iii).
  Property (i) is well known  \cite{L1}.   The inclusion $\cW\subset \cD(\cL)$  in (ii) follows from Proposition  \ref{eastpak} since $\vertiii{f}=\vertiii{S(0)f}<+\infty$ and 
  $\vertiii{f}_*=\vertiii{S(0)f}_*<+\infty$ for any $f\in \cW$. If we prove that $\cC\subset \cW$ (thus completing (ii)), then automatically we have (iii)
  since, by definition of $\cW$,  $S(t)\cW\subset \cW$ for any $t\in \bbQ_+$. Hence, it remains to prove that $\cC\subset \cW$. To this aim we first show that, given $f\in C(\{0,1\}^S)$, 
  \be\label{batteria}
 \D_{S(t) f} (u) \leq \sum _{x\in S} \D_f(x)  P_x(X_t=u)\,,
 \en 
 where the probability $ P_x$ refers to the random walk starting at $x$.
 To prove \eqref{batteria} suppose that $\s$ and  $\xi$ in $\{0,1\}^S$  are equal except at $u$. Recall that, by the graphical construction, 
 $S(t) f(\s)=\bbE[ f( \eta_t ^\s[\cK])]$ and   $S(t) f(\xi)=\bbE[ f( \eta_t ^\xi[\cK])]$. Since 
 $ \eta_t ^\s[\cK](x)= \s( X_t^x[\cK])$ and  $ \eta_t ^\xi[\cK](x)= \xi( X_t^x[\cK])$, we get that 
  $ \eta_t ^\s[\cK](x)=  \eta_t ^\xi[\cK](x)$ for all $x$ such that $X_t^x[\cK]\not =u$. Therefore, by \eqref{eccolo8}, 
 \[
 | f( \eta_t ^\s[\cK])- f( \eta_t ^\xi[\cK])| \leq \sum _{x\in S} \D_f(x) \mathds{1}( X_t^x[\cK]=u)\,.
 \]
 It then follows that
 \[ 
 | S(t) f(\s)-S(t) f(\xi)| \leq \sum _{x\in S} \D_f(x) \bbP( X_t^x[\cK]=u)=\sum _{x\in S} \D_f(x)  P_x(X_t=u)\,.
 \]
 From the above  estimate we get   \eqref{batteria}.}

 \rot{From  \eqref{batteria} we then have  
\begin{align*} 
& \vertiii{S(t) f}=\sum_{u \in S} \D_{S(t)f}(u)\leq \sum_{u\in S}\sum _{x\in S} \D_f(x)  P_x(X_t=u)= \sum _{x\in S} \D_f(x) =\vertiii{f}\,,\label{solare95}\\
 & \vertiii{S(t)f}_* =\sum_{u\in S} c_u \D_{S(t) f} (u) \leq
\sum_{u\in S} c_u \sum _{x\in S} \D_f(x)  P_x(X_t=u)
=\sum _{x\in S} \D_f(x)  E_x\big[c_{X_t}\big]\,.
\end{align*}
From the above two bounds and \eqref{condo} we get immediately that $\cC\subset \cW$.
}
 

%
\subsection{Proof of Lemma \ref{sfinimento}}\label{lollo} We need to prove for any $f\in C_c(S)$ that 
\be\label{atzechi}\lim_{t\da 0} \sum_{x\in S}  \Big(  \frac{P_t f (x) -f(x) }{t} -  \tilde \bbL f(x) \Big)^2=0\,.
\en
Since functions $f\in C_c(S)$ are finite linear combinations of Kronecker's functions, it is enough to consider the case $f(x)=\mathds{1}(x=x_0) =\d_{x,x_0}$ for a fixed $x_0 \in S$. 
To this aim we  write $N$ for the number of jumps performed by the random walk  in the time window $[0,t]$.
 Recall that $P_t f(x)=E_x \big[ f(X_t) \big]$. 
 We write $P_x$ for the probability on the path space associated to the random walk starting at $x$.
 We have
 \begin{equation*}
 \begin{split}
 &  \frac{P_t f (x) -f(x) }{t} -  \tilde \bbL f(x)   =f(x) \Big[ \frac{P_x(N=0)-1}{t}-  c_x\Big] 
 \\
 & +\sum_{y} f(y) \Big[\frac{P_x( N=1,\, X_t=y)}{t}- c_{x,y}\Big]
  +\frac{ P_x( N\geq 2,\, X_t=x_0)}{t}\,.
 \end{split}
 \end{equation*} As a consequence (recall that $f(x)=\d_{x,x_0}$)
to get \eqref{atzechi}  it is enough to prove the following:
\begin{align}
& \lim_{t\da 0}   \Big[ \frac{ e^{-c_{x_0} t} -1}{t} - c_{x_0} \Big]^2 =0\,,\label{lim1}\\
& \lim_{t\da 0 } \sum _{x\in S} \Big[ c_{x,x_0} \Big(\frac{\int_0 ^t ds\, e^{-c_x s} e^{-c_{x_0} (t-s) } }{t} -1 \Big) \Big]^2  =0\label{lim2}\,,\\
& \lim_{t\da 0}\sum_{x\in S} \frac{1}{t^2}P_x\big( X_t =x_0, N \geq 2\big) ^2  =0\,.\label{lim3}
\end{align}
  \eqref{lim1} is trivial. 
  \eqref{lim2} can be rewritten as (recall that $c_{x,x_0}=c_{x_0,x}$) 
\[
\lim_{t\da 0 } \sum _{x\in S}   c_{x_0,x}^2  F_t (x) =0\,,\qquad F_t(x):=
\Big(\frac{\int_0 ^t ds\, e^{-c_x s} e^{-c_{x_0} (t-s) } }{t} -1 \Big) ^2 \,.
\]
Since $\| F_t \|_\infty \leq 1$ and $\lim_{t\da 0} F_t(x)=0$ for all $x\in S$,  the above limit follows from the dominated convergence theorem if we show that $\sum _{x\in S}   c_{x_0,x}^2<+\infty$. To this aim we observe that, since $c_{x_0}= \sum _{x\in S}   c_{x_0,x} <+\infty$, it must be $\sup_{x\in S} c_{x_0,x} <+\infty$. Then we can bound  $\sum _{x\in S}   c_{x_0,x}^2\leq c_{x_0} \sup_{x\in S} c_{x_0,x}<+\infty$.

To prove \eqref{lim3} we use  the symmetry of  the random walk to get the bound
\be\label{bagno}
\begin{split}
& \sum_{x\in S} P_x\big( X_t =x_0, N \geq 2\big) ^2=\sum_{x\in S} P_{x_0}\big( X_t =x, N \geq 2\big) ^2\\
&\leq \Big[ \sum_{x\in S} P_{x_0}\big( X_t =x, N \geq 2\big)  \Big]  P_{x_0}\big( N \geq 2\big)=P_{x_0}\big( N \geq 2\big)^2 \,.\end{split}
\en
Now we note that 
\[ \frac{1}{t} P_{x_0}\big( N \geq 2\big)  =\sum_{x\in S} c_{x_0,x} G_t(x)\,, \qquad G_t(x):=\frac{1}{t} \int_0 ^t ds \, e^{-c_{x_0} s}( 1- e^{-c_x (t-s) } )\,.
\]
We have $\| G_t\|_\infty  \leq 1$ and $\lim_{t\da 0} G_t(x) = 0$. Since in addition  $\sum_{x\in S} c_{x_0,x}=c_{x_0}<+\infty$, by the dominated convergence theorem we conclude that $\lim_{t\da 0} P_{x_0}\big( N \geq 2\big)/t=0$. As a byproduct of the above limit and \eqref{bagno} we get \eqref{lim3}.
 \section{Graphical construction and Markov generator of SEP: proofs}\label{sec_proof_SEP}
\rosso{We start with the proof of Lemma \ref{lemma_somma}}.


 \begin{proof}[Proof of Lemma \ref{lemma_somma}] 
 Trivially it is enough to prove the first bound. To this aim we argue by contradiction and assume Assumption SEP to hold and  that  $c_x^s =+\infty$  for some $x\in S$.
  Without loss of generality, we can suppose that  $x=s_1$ (see \eqref{enu}). 
  Due to the superposition property of independent Poisson point processes, \rosso{under $\bbP$},  $M_n (t):= \sum _{i=2}^n \cK^s_{x, s_i}(t)$, $t\geq 0$,  is  a Poisson process with parameter $\l_n:= \sum _{i=2}^n c^s_{x,s_i}$. 
 Hence, fixed $k\in \bbN$ and  $t>0$, 
 $\bbP( M_n (t) \leq  k ) =\sum_{j=0} ^k e^{- \l_n t} (\l_n  t) ^j / j!
 $. Since by hypothesis $\lim_{n\to +\infty} \l_n =c_x=+\infty$, we conclude that $\lim_{n \to +\infty} \bbP( M_n(t) \leq k )=0$.  Since in addition  the sequence of events $n\mapsto \{M_n (t) \leq k\}$ is monotone and decreasing, we get that  $\bbP( \cap _{n \geq 2} \{ M_n(t) \leq k \} )=0 $. Note that  $\cap _{n \geq 2} \{ M_n(t) \leq k \} $ means that $x$ has degree at most $k$ in the graph $\cG_{t}(\cK)$. As a consequence,  for any fixed $t>0$, $\bbP$--a.s. the vertex $x$ has infinite degree  in the graph $\cG_{t}(\cK)$, thus contradicting Assumption SEP.
 \end{proof}

%

Recall the set $\G_*\subset D_\bbN^{\cE_S^o}$ introduced in Definition \ref{def_gamma_o}.
\rot{Moreover, recall that sets  $B_{r,x}(\cK)$  and $\cC_{t,x}$ introduced in Definition \ref{brunetta}.}


  \begin{Remark}\label{rem_latte}
   By the graphical construction and since $\cK\in \G_*$,  $B_{r,x}(\cK)$ is a finite set and  $\{\cK\in \G_*\,:\, B_{r,x}(\cK)=B\}$ is a Borel subset of $D_\bbN^{\cE_S^o}$ for all $B\subset S$.
   Fix now $t$ with $rt_0<t\leq (r+1) t_0$. Then,
    for any $\s\in \{0,1\}^S$, the value   
   $\eta_t ^\s(x)[\cK]$ depends on $\s$ only through   the restriction of  $\s$ to $\rot{\cC_{t,x}(\cK)=}B_{r,x}(\cK)$ and, knowing that  $\rot{\cC_{t,x}(\cK)=}B_{r,x}(\cK)=B$, it  depends on $\cK$ on through \rot{the} values $\cK_{y,z}(s)$ with $y\not= z $ in $B$ and $s\in [0,t]$.
      \end{Remark}

 \rot{The following lemma is the SEP version of Lemma \ref{messi}}:
 \begin{Lemma}\label{messi_o} Fixed $t\geq 0$, the map $ \{0,1\}^S \times \G_* \ni (\s,\cK) \mapsto  \eta^\s_t [\cK]  \in \{0, 1\}^S$ is continuous in $\s$ (for fixed $\cK$) and measurable in $\cK$ (for fixed $\s$).  
 \end{Lemma}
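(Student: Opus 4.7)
\smallskip

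\noindent\textbf{Proof plan for Lemma \ref{messi_o}.} The strategy mirrors the proof of Lemma \ref{messi} in the SSEP case, with Remark \ref{rem_latte} playing the role that Lemma \ref{mes0} and Definition \ref{rumore} played there. The key structural input is that, for $\cK\in\G_*$ and $r\in\bbN$ with $rt_0<t\leq(r+1)t_0$ (the case $t=0$ being trivial since $\eta^\s_0[\cK]=\s$), the finite set $B_{r,x}(\cK)$ completely controls the dependence of $\eta^\s_t[\cK](x)$ on both $\s$ and $\cK$.

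For continuity in $\s$ with $\cK\in\G_*$ fixed, I would argue as in Lemma \ref{messi}. Given $N\in\bbN_+$, set
\[
A := \bigcup_{n=1}^{N} B_{r,s_n}(\cK),
\]
which is a \emph{finite} subset of $S$ by Remark \ref{rem_latte} and the finiteness of the connected components of each $\cG^{r'}_{t_0}(\cK)$. Choose $M$ large enough that $A\subset\{s_1,\dots,s_M\}$. By Remark \ref{rem_latte}, whenever $\s(y)=\s'(y)$ for every $y\in A$, we have $\eta^\s_t[\cK](s_n)=\eta^{\s'}_t[\cK](s_n)$ for $n=1,\dots,N$, so that $d(\eta^\s_t[\cK],\eta^{\s'}_t[\cK])\leq 2^{-N}$. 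Since $d(\s,\s')<2^{-M}$ forces $\s$ and $\s'$ to agree on $\{s_1,\dots,s_M\}\supset A$, continuity follows.

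For measurability in $\cK$ with $\s$ fixed, since $\cB(\{0,1\}^S)$ is generated by the coordinate maps, it suffices to show that $\{\cK\in\G_*:\eta^\s_t[\cK](x)=1\}$ is Borel for every $x\in S$. I would decompose this set as the disjoint countable union
\[
\bigcup_{\substack{B\subset S\text{ finite}\\ x\in B}}\bigl(\{\cK\in\G_*:B_{r,x}(\cK)=B\}\cap\{\cK\in\G_*:\eta^\s_t[\cK](x)=1\}\bigr).
\]
The first factor is Borel by Remark \ref{rem_latte}. On the event $\{B_{r,x}(\cK)=B\}$, Remark \ref{rem_latte} says that $\eta^\s_t[\cK](x)$ depends on $\cK$ only through the restriction $(\cK_{y,z}(s))_{y\neq z\text{ in }B,\,s\in[0,t]}$ and on $\s$ only through $\s|_B$. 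Since $B$ is finite, this reduces to showing that the iterative graphical algorithm described in Section \ref{sec_SEP}, run inside $B$, produces a Borel function of $\cK$.

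The main (and essentially only) technical point is this last reduction. The algorithm inside $B$ sorts, for each $r'\leq r$, the finitely many jump times of $\cK_{y,z}(\cdot)$ with $y\neq z\in B$ inside $(r't_0,(r'+1)t_0]$, and at each such time either performs a deterministic exchange on $\{0,1\}^B$ or does nothing, according to occupancy. Each condition ``$\cK_{y,z}$ has a jump in the interval $(u,v]$'' and ``the $j$-th jump of $\cK_{y,z}$ in $(r't_0,(r'+1)t_0]$ precedes the $\ell$-th jump of $\cK_{y',z'}$ in the same interval'' is Borel, as these are countable Boolean combinations of the measurable maps $\cK\mapsto\cK_{y,z}(s)$ (for instance using the rational-approximation argument employed in the proof of Lemma \ref{mes0}). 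Composing these finitely many Borel operations yields the desired measurability, completing the proof.
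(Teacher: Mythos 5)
Your proposal is correct and follows essentially the same route as the paper's proof: continuity in $\s$ via the finite set $\bigcup_{n\leq N}B_{r,s_n}(\cK)$ and Remark \ref{rem_latte}, and measurability in $\cK$ by partitioning over the finitely many possible values $B$ of $B_{r,x}(\cK)$ and observing that on each such event the occupation at $x$ is a Borel (cylinder) function of the finitely many coordinates $\cK_{y,z}$, $y,z\in B$. The only difference is cosmetic: the paper packages the local event as a product set $\cA_B^*$ and simply asserts its Borel character ``since all is finite'', whereas you spell out why the finite-volume graphical algorithm is Borel, which is a harmless (and slightly more explicit) elaboration of the same step.
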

 \begin{proof} For $t=0$ we have $ \eta^\s_t [\cK]=\s$ and the claim is trivially true. We fix  $t>0$ and take $r\in \bbN$ such that $r t_0 <t \leq (r+1) t_0$.

 We first  prove the continuity in $\s$ for fixed $\cK$. By \eqref{sarto}, 
  given  $N\in \bbN_+$,  we have  $d( \eta^\s_t [\cK], \eta^{\s'}_t [\cK])\leq 2^{-N}$ if $\eta^\s_t [\cK](s_n)= \eta^{\s'}_t [\cK](s_n)$ for all $n=1,2,\dots, N$. 
  Recall Definition \ref{brunetta} and set 
$A(\cK):= \cup _{n=1}^N B_{ r,s_n} (\cK)$. 
 By  Remark \ref{rem_latte}
 we have $\eta^\s_t [\cK](s_n)= \eta^{\s'}_t [\cK](s_n)$ for all $n=1,2,\dots, N$ whenever $\s$ and $\s'$ coincide on $A(\cK)$, which is automatically satisfied if $d(\s,\s')$ is small enough \rosso{since $A(\cK)$ is finite}. This proves the continuity in $\s$. 
  
   We  now prove the measurability in $\cK$ for fixed $\s$.  We just need to prove that, fixed $\s\in \{0,1\}^S$ and $x\in S$,  the set    $\{ \cK\in \G_*\,:\, \eta_t^\s[\cK](x)=1\}  $  is measurable in $D_{\bbN}^{\cE_S^o}$. 
   Given $B\subset S$ finite, call $\cA_B$ the event in $D_{\bbN}^{\cE^o_B}$ that the SEP on $B$ built by  the \rosso{standard} graphical construction on $B$ has value $1$ at $x$  at time $t$ \rosso{($\cE^o_B$ is given by the pairs $(y,z)$ with  $y\not = z$ in $B$)}. Since in this case  all is finite, $\cA_B$ is Borel in $D_{\bbN}^{\cE^o_B}$. We define $\cA_B^*$ as the Borel set in  $D_{\bbN}^{\cE^o_S}$ obtained as product set of $\cA_B$ with 
   $D_{\bbN}^{\cE^o_S\setminus \cE^o_B}$.
   Then, see Remark \ref{rem_latte}, 
   \[
   \{ \cK\in \G_*\,:\, \eta_t^\s[\cK](x)=1\} = \cup _{B\subset S \text{ finite}} \left( \{ \cK\in \G_*\,:\, \rosso{B_{r,x}(\cK)}=B\}\cap \cA_B^*\right) \,. \]
    By  Remark \ref{rem_latte} the set  $ \{ \cK\in \G_*\,:\, B_{r,x}(\cK)=B\}
 $ is Borel in $D_{\bbN}^{\cE_S^o}$, thus allowing to conclude.    
%
%
    \end{proof}
 
 \begin{Lemma}\label{pablito_o} For each $\s\in \{0,1\}$ and $\cK\in \G_*$, the path 
 $\eta^\s _\cdot[\cK] :=\bigl(  \eta^\s_t [\cK] \bigr)_{t\geq 0}$ belongs to $D_{\{0, 1 \}^S}$. Moreover, fixed $\s \in \{0,1\}^S$, the map
 \be\label{gallavotti_o}
  \G_* \ni \cK \mapsto  \eta^\s_\cdot [\cK]  \in D_{\{0,1\}^S} 
  \en
  is measurable in $\cK$.
 \end{Lemma}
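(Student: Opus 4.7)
The plan is to follow the strategy used for Lemma \ref{pablito} in the SSEP case, exploiting that $\{0,1\}^S$ carries the product topology given by the metric $d$ in \eqref{sarto}. Concretely, c\`adl\`ag-ness of $\eta^\s_\cdot[\cK]$ in $\{0,1\}^S$ reduces to coordinate-wise c\`adl\`ag-ness: for each $x\in S$ the path $\bbR_+\ni t\mapsto \eta^\s_t[\cK](x)\in\{0,1\}$ must be c\`adl\`ag, and the usual $\e$--$2^{-N}$ estimate on $d$ promotes this to the product.

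First I would establish the coordinate-wise property. Fix $\cK\in \G_*$, $\s\in\{0,1\}^S$, $x\in S$ and $r\in \bbN$. By the iterative construction of Section \ref{sec_SEP}, on the window $(rt_0,(r+1)t_0]$ the coordinate $t\mapsto \eta^\s_t[\cK](x)$ is determined by the finitely many ordered times $s_1<\cdots<s_k$ at which some $\cK_{y,z}$ with $y\ne z$ in the connected component $\cC$ of $x$ in $\cG^r_{t_0}(\cK)$ jumps in that window. Since $\cK\in \cap_{r\in\bbN}\G_r$ the component $\cC$ is finite, so $k$ is finite, and by construction $t\mapsto \eta^\s_t[\cK](x)$ is a piecewise constant, right-continuous step function on $(rt_0,(r+1)t_0]$ whose jumps are contained in $\{s_1,\dots,s_k\}$. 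Combined with $\eta^\s_0[\cK]=\s$, this shows that each coordinate is c\`adl\`ag on all of $\bbR_+$, with only finitely many jumps in any bounded interval.

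Next I would lift this to $\{0,1\}^S$. For right-continuity at $t\ge 0$, given $\e>0$ pick $N$ with $2^{-N+1}<\e$; for each $n=1,\ldots,N$ the coordinate-wise right-continuity yields $\d_n>0$ with $\eta^\s_u[\cK](s_n)=\eta^\s_t[\cK](s_n)$ for $u\in[t,t+\d_n]$. Then $\d:=\min_n \d_n>0$ gives $d(\eta^\s_u[\cK],\eta^\s_t[\cK])\le 2^{-N}<\e$ for all $u\in[t,t+\d]$. For the existence of left limits at $t>0$, define $\xi(x):=\lim_{u\uparrow t}\eta^\s_u[\cK](x)\in\{0,1\}$ (this limit exists coordinate-wise by the step-function structure); the same $\e$--$2^{-N}$ argument then yields $\lim_{u\uparrow t}\eta^\s_u[\cK]=\xi$ in $\{0,1\}^S$. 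Hence $\eta^\s_\cdot[\cK]\in D_{\{0,1\}^S}$.

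For measurability of the map \eqref{gallavotti_o}, I would invoke the characterization recalled at the end of Section \ref{sec_noto}: $\cB(D_{\{0,1\}^S})$ is the smallest $\s$-algebra making all coordinate maps $\eta_\cdot\mapsto \eta_t\in\{0,1\}^S$ measurable. It therefore suffices to check that for every $t\ge 0$ and every $B\in \cB(\{0,1\}^S)$ the set $\{\cK\in\G_*:\eta^\s_t[\cK]\in B\}$ is measurable, which is precisely the measurability in $\cK$ (for fixed $\s$) provided by Lemma \ref{messi_o}. No real obstacle arises; the only delicate point is confirming that within $(rt_0,(r+1)t_0]$ the coordinate $\eta^\s_\cdot[\cK](x)$ is genuinely driven only by the finitely many Poisson jumps within $\cC$, which is built into the iterative construction together with the disjointness of jump times in the definition of $\G_*$.
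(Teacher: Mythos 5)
Your proposal is correct and takes essentially the same approach as the paper: c\`adl\`ag-ness is reduced to coordinatewise c\`adl\`ag-ness via the product metric \eqref{sarto} and established through the finiteness of the relevant connected components (the paper phrases this local dependence through Remark \ref{rem_latte} and the sets $B_{r,x}(\cK)$, while you argue directly within each time window $(rt_0,(r+1)t_0]$, which amounts to the same thing), and measurability follows from Lemma \ref{messi_o} together with the fact that $\cB(D_{\{0,1\}^S})$ is generated by the coordinate maps.
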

 \begin{proof}
 Let us show that $\eta^\s _\cdot[\cK]$ belongs to $D_{\{0, 1 \}^S}$.  We start with the right continuity. Fix $t\geq 0$ and let $r t_0 <t \leq (r+1) t_0$ with $r\in \bbN \cup \{-1\}$.  Due to \eqref{sarto} we just need to show that  $\lim_{u\downarrow t} \eta^\s_u [\cK](x)=\eta^\s_t [\cK](x)$ for any $x\in S$. 
  By Remark \ref{rem_latte}, as $\cK \in \G_*$, the set $B:=B_{ r+1,x}(\cK)$   is finite
   and 
  $\eta^\s_u [\cK](x)=\eta^\s_t [\cK](x)$ if  $\cK_{y,z}$ has no jump time in \rosso{$(t,u]$} for all $(y,z)\in \cE_S^o$ with $y,z\in B$. Since $B$ is finite and the jump times of $\cK_{y,z}$ form a locally finite set, we conclude that the above condition is satisfied  for $u$ sufficiently close to $t$. This proves the right continuity. To prove that  $\eta^\s _\cdot [\cK]$ has  left limit in $t>0$, by \eqref{sarto} we just need to show that, for any $x\in S$,   $\eta^\s_\cdot [\cK](x)$  has  left limit in $t>0$, i.e. it is constant in the time window $(u,t)$ for some $u<t$. By Remark \ref{rem_latte} and since $\cK\in \G_*$ this follows from the fact that, by taking $u$ close to $t$,  for all $(y,z) \in \cE_S^o$ with $y,z\in B$ the path  $\cK_{y,z}$ has no jump in $(u,t)$ (as $B$ is finite).
  
  The proof of the measurability of the map  \eqref{gallavotti_o} follows the same steps of the proof of the measurability of the map  \eqref{gallavotti} (just replace there Lemma \ref{messi} by Lemma \ref{messi_o}). 
   \end{proof}

\subsection{Proof of Propositions \ref{costruzione_SEP} and  \ref{feller_SEP}}\label{provette}

The proof of Proposition \ref{costruzione_SEP} equals verbatim the proof  of  Proposition \ref{costruzione_SSEP}  by replacing  Lemma \ref{messi} with  Lemma \ref{messi_o}, with exception for the derivation  of \eqref{andreino1} (which can anyway be obtained from the graphical construction).
The proof of Proposition \ref{feller_SEP} equals verbatim the proof of Proposition \ref{feller_SSEP} by replacing  Lemma \ref{messi} with  Lemma \ref{messi_o}.


\subsection{Proof of Propositions \ref{inf_SEP}}\label{provette_bis}
The proof follows in good part \cite[Appendix~B]{F_SEP}. Since the notation there is slightly different and some steps have to be changed, we give the proof for completeness.

 Below   $\cK$ will always  vary in $\G_*$, without further mention.
Given $t\in (0,t_0]$  we denote by 
 $\cG _{t}(\cK)$ 
the undirected graph with vertex set $S$ and edge set 
  $\{ \{x,y\}\in \cE_S \,:\, \cK^s_{x,y}(t)  >0\}$.   As  $\cG _{t}( \cK)$ is a subgraph of $\cG _{t_0}(\cK)$,  the graph  $\cG _{t}(\cK)$  has only connected components of finite cardinality. Moreover, as $t\leq t_0$,     it is simple to check that $\eta_t^\s [\cK]$ can be obtained by the graphical construction of  Section \ref{sec_SEP}  but working with the graph  $\cG _{t}(\cK)$  instead of $\cG _{t_0}(\cK)$. 
  
  Let $f : \{0,1\}^S \to \bbR$ be a local function such  that $f(\eta)$ is defined in terms only of $\eta(x)$ with $x\in A$ and $A\subset S$ finite.  We set $\cE_{A}:=\{\{x,y\} \in \cE_S \,:\, \{x,y\} \cap A \not = \emptyset\}$. 
 By Lemma \ref{lemma_somma} we have 
 \be\label{stimetta_o}c^s_A  := \sum _{\{x,y\} \in \cE_{A}} c^s_{x,y} \leq 
 \sum_{x\in A} \sum _{y \in S} c^s_{x,y} = 
 \sum _{x\in A} c^s_x <+\infty\,.
 \en
 By \eqref{stimetta_o}  it is simple to check that 
  the r.h.s.'s of \eqref{mammaE_bis} is an absolutely convergent series in $C(\{0,1\}^S)$ defining therefore a  function in $C(\{0,1\}^S)$  that we denote by $\hat \cL  f$. Hence we just need to prove that $\cL  f= \hat \cL f$.

 Since  $\cK^s_A(t):= \sum_{\{x,y\} \in \cE_A} \cK^s_{x,y}(t)$ is a Poisson random variable with finite  parameter  $c_A$, we have 
  $\bbP ( \cK\,:\,  \cK^s_A(t) \geq 2) = 1- e^{-c_A  t} (1+ c_A  t) \leq C  t^2$.
 When  $\cK_A(t)=1$, we define  the pair  $\{x_0,y_0\}$ as the only edge in $\cE_A$ such that $\cK_{x_0,y_0}(t)=1$. To have a univocally defined labelling, as in the proof of Proposition \ref{inf_SSEP}, 
if  the pair has only one point in $A$, then we call this point $x_0$ and the other one $y_0$. Otherwise, we call $x_0$ the minimal (w.r.t. the enumeration \eqref{enu}) point inside the pair. 
\begin{Claim}\label{violaceo}
Let $F $ be the event that (i) $\cK^s_A(t)=1$ and  (ii) $\{x_0, y_0\}$ is not a  connected component of $\cG_t (\cK)$. Then $\bbP (F) =o(t)$.
\end{Claim}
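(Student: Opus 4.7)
The plan is to mimic, in this SEP setting, the estimate obtained around \eqref{mare} in the proof of Proposition~\ref{inf_SSEP}, using $\cK^s$ and $\cE_A$ in place of $\cK$ and $\cE(A)$. First I would make the combinatorial observation that determines the geometry of $F$: on the event $\cK^s_A(t)=1$, the only edge in $\cE_A$ that is ``active'' in $\cG_t(\cK)$ is $\{x_0,y_0\}$. Hence $\{x_0,y_0\}$ fails to be a connected component of $\cG_t(\cK)$ only if it is incident to some other active edge, and that other edge must lie \emph{outside} $\cE_A$. Since $x_0\in A$ by our labelling convention, every edge incident to $x_0$ lies in $\cE_A$; therefore the extra active edge is necessarily of the form $\{y_0,z\}$ with $y_0\notin A$ and $z\in S\setminus(A\cup\{y_0\})$. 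In particular, $F$ is empty whenever $y_0\in A$.

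Next I would apply a union bound and use independence of the Poisson processes $\cK^s_{\{x,y\}}$:
\begin{equation*}
\bbP(F)\le \sum_{x\in A}\sum_{y\in S\setminus A}\bbP\bigl(\cK^s_{x,y}(t)\ge 1\bigr)\cdot \bbP\Bigl(\sum_{z\in S\setminus(A\cup\{y\})}\cK^s_{y,z}(t)\ge 1\Bigr),
\end{equation*}
since the event ``$\cK^s_A(t)=1$ with $\{x_0,y_0\}=\{x,y\}$ and there exists an extra active edge $\{y,z\}$ with $z\notin A$'' is contained in the intersection of the two independent events $\{\cK^s_{x,y}(t)\ge 1\}$ and $\{\cK^s_{y,z}(t)\ge 1\text{ for some }z\in S\setminus(A\cup\{y\})\}$. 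Bounding $\bbP(\cK^s_{x,y}(t)\ge 1)\le c^s_{x,y}\,t$ and using that $\sum_{z\in S\setminus(A\cup\{y\})}\cK^s_{y,z}(t)$ is a Poisson variable with parameter at most $c^s_y$, I get
\begin{equation*}
\bbP(F)\le t\sum_{x\in A}\sum_{y\in S}c^s_{x,y}\bigl(1-e^{-c^s_y t}\bigr).
\end{equation*}

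The final step is to divide by $t$ and let $t\downarrow 0$ using dominated convergence on the counting measure on $A\times S$. The pointwise limit of the summand $c^s_{x,y}(1-e^{-c^s_y t})$ is $0$, and the summand is dominated by $c^s_{x,y}$, whose total sum is $\sum_{x\in A}c^s_x<+\infty$ by \eqref{stimetta_o} (which in turn rests on Lemma~\ref{lemma_somma}, the SEP analogue of (C1)). Hence $\bbP(F)/t\to 0$, i.e.\ $\bbP(F)=o(t)$. The only delicate point is the geometric observation in the first paragraph, which isolates the ``escape'' edge $\{y_0,z\}$ as lying outside $\cE_A$ and so independent of the process on $\cE_A$; this is what allows the factorization and makes the Lemma~\ref{lemma_somma} bound $c^s_A<+\infty$ strong enough to conclude by dominated convergence.
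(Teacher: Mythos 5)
Your proposal is correct and follows essentially the same route as the paper: the same geometric reduction (on $\{\cK^s_A(t)=1\}$ the offending extra active edge must be of the form $\{y_0,z\}$ with $y_0,z\notin A$, so $F$ is contained in the analogue of the paper's event $G$), the same union bound with factorization by independence of the Poisson processes on distinct edges, and the same dominated convergence step using \eqref{stimetta_o}. The only cosmetic difference is bounding $\bbP(\cK^s_{x,y}(t)\ge 1)\le c^s_{x,y}t$ instead of using the exact probability of a single jump, which changes nothing.
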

\begin{proof}[Proof of Claim \ref{violaceo}] 
We first show that   $F\subset G$, where 
\[
G=\bigl\{ \cK^s_A(t)=1\,, \; x_0\in A\,,\; y_0\not \in A\,,\;\\
 \exists z \in S \setminus (A \cup \{y_0\}) \text{ with } \cK^s_{y_0,z}(t)\geq 1\bigr\}\,.
\]

To this aim suppose first that $\cK^s_A(t)=1$ and 
   $x_0,y_0\in A$. Then $\{x_0,y_0\}$ must be a connected component in $\cG_t (\cK)$ (otherwise we would contradict $\cK^s_A(t)=1$).  Hence, $F$ implies that 
   $x_0\in A$ and $y_0 \not \in A$. By $F$, $\{x_0,y_0\}$  is   not a connected component of $\cG_t (\cK)$, and therefore there   exists
a point $z\in S\setminus\{x_0,y_0\}$  such that $\cK^s_{x_0,z}(t) \geq 1$ or $\cK^s_{y_0,z}(t)\geq 1$. The first case cannot  occur as $ \cK^s_A(t)=1$, $x_0\in A$ and $\cK^s_{x_0,y_0}(t) \geq 1$. By the same reason, in the second case it must be $z\not \in A$.  Hence, there exists
$z\in S \setminus (A\cup\{y_0\})$ such that $\cK^s_{y_0,z}(t)\geq 1$, thus concluding the proof that 
$F\subset G$.

As $F\subset G$  to prove that $\bbP(F)=o(t)$ it is enough to show that $\bbP(G)=o(t)$.  To this aim we first estimate $\bbP(G)$ by \begin{equation*}
\begin{split}
 \bbP (G) & \leq \sum_{x\in A} \sum _{y \in S\setminus A}  \bbP (  \cK^s_{x,y}(t)=1\,, \sum_{z\in S \setminus (A\cup\{y\})} \cK^s_{y,z}(t) \geq 1) \\
& \leq  t \sum_{x\in A} \sum _{y \in S } c^s_{x,y}   e^{- c^s_{x,y} t}  (1- e^{-c^s_y t})\,.
\end{split}
\end{equation*}
By applying the dominated convergence theorem   to the measure giving weight $c_{x,y}^s$ to the pair $(x,y)$ with $x\in A$ and $y\in S$ and by using 
 \eqref{stimetta_o},  we get $\lim _{t\da 0} \bbP (G)/t=0$.  \end{proof}

We define $H $ as the event that (i) $\cK^s_A(t)=1$ and  (ii) $\{x_0, y_0\}$ is a connected component of $\cG_t ( \cK)$.  Since  $\bbP ( \cK^s_A(t) \geq 2)  \leq C t^2$ and due to 
 Claim \ref{violaceo}  we get 
\be \label{eccolo}
\bbP ( \{\cK^s_A(t)=0\} \cup H ) = 1-o(t) \,.
\en
We set $\cE_A^o:=\{ (x,y)\in \cE_S^o \,:\, \rosso{\{x,y\}\in \cE_A} \}$. Given $(x,y)\in \cE_A^o$ we set 
\[ W_{x,y}:=  \{\cK^s_A(t)=1 \text{ and }  \{x_0,y_0\}
=\{x,y\}\,\}\cap \{ \cK_{x,y}(t)=1\}\, .
\,
\]

Trivially, by the graphical construction,  if $\cK^s _A(t)=0$, then $\eta^\s_t[\cK](z)= \s(z)$ for all $z\in A$.  On the other hand, if \rosso{$W_{x,y}\cap H$} takes place, then $\eta^\s_t[\cK](z)= \s^{x,y}(z)$ for all $z\in A$ if $\s(x)=1$ and $\s(y)=0$, otherwise $\eta^\s_t[\cK](z)= \s(z)$ for all $z\in A$.  \rosso{By \eqref{eccolo} and the above observations and 
since $S_t f (\s):= \bbE\left[ f\big( \eta^\s_t [\cK] \big)\right]$}, we can write
\[
S(t) f (\s) -f(\s) =\sum_{ (x,y)\in \cE^o_A} \s(x) (1-\s(y))[   f(\s^{x,y}) -f (\s) ]\bbP ( H\cap W_{x,y}) + o(t)\,.
\]
As $\bbP(F)=o(t)$  and $f$ is bounded
we can rewrite  the above r.h.s. as
 \begin{equation*}
\begin{split}&
\sum_{ (x,y)\in \cE^o_A} \s(x) (1-\s(y))[   f(\s^{x,y}) -f (\s) ]\bbP ( W_{x,y}) + o(t)\\
&=t\sum_{ (x,y)\in \cE^o_A} \s(x) (1-\s(y))[   f(\s^{x,y}) -f (\s) ] c_{x,y} e^{- c_A t}+  o(t)\,.
\end{split}
\end{equation*}
As  $\lim_{t\da 0} o(t)/t = 0 $  uniformly in $\s$, 
by the dominated convergence theorem  (use \eqref{stimetta_o}) we can conclude that $\hat \cL_\o f = \cL_\o f $.

\subsection{Proof of Proposition \ref{eastpakbis}}\label{cedrata} \rot{The proof can be obtained by   slight modifications from  the proof of Proposition \ref{eastpak} for the symmetric case. We give it for the reader's convenience.     
Let  $f$ be as in Proposition \ref{eastpakbis} and let  $T_{x,y} f (\eta):=f(\eta^{x,y})-f(\eta)$. 
Since   $|f(\eta^{x,y})-f(\eta)|\leq \D_f(x)+\D_f(y)$ by \eqref{eccolo8}, $\| T_{x,y} f \|_\infty \leq \D_f(x)+\D_f(y)$ and 
  \be\label{bici88bis} 
  \begin{split}
 &  \sum_{x\in S}\sum_{y\in S}\| c_{x,y} \eta(x) (1-\eta(y)) T_{x,y} f \| _\infty\leq 
  \sum_{\{x,y\}\in \cE_S} c^s_{x,y} \|T_{x,y} f\|_\infty\\
  &  \leq \sum_{\{x,y\}\in \cE_S} c^s_{x,y}(\D_f(x)+\D_f(y))=\sum_{x\in S} c^s_x \D_f(x)\leq \vertiii{f}_\star<+\infty\,.
  \end{split}
  \en
  This proves the absolute convergence of  $ \sum_{x\in S}\sum_{y\in S}  c_{x,y} \eta(x) (1-\eta(y)) T_{x,y} f(\eta) $ as series of functions in $C(\{0,1\}^S)$. Let us call $h(\eta)$ the resulting function. }
  
  \rot{Since $\cL$ is close, 
   to conclude it is enough to exhibit a sequence of local functions $f_n$ such that $\| f_n -f\|_\infty\to 0$ and $\|  \cL f_n - h\|_\infty \to 0$ as $n\to +\infty$.
 To this aim define  $A_n$, $\eta^{(n)}$ and $f_n(\eta):=f(\eta^{(n)})$ as in the proof of Proposition \ref{eastpak}.  
  By what proved there, we know that 
  $f_n$ is a local function and 
 $\| f- f_n\|_\infty  \leq \sum _{x\not \in A_n}  \D_f (x) $, hence it goes  to zero as $n\to +\infty$ since $\vertiii{f}<+\infty$.  
It remains to  prove  that 
 $\|  \cL f_n - h\|_\infty \to 0$.  We fix $\e>0$. Due to \eqref{bici88bis} 
 we can fix $N$ such that
 \be \label{ululato1bis}
  \sum_{
  \substack{(x,y)\in S\times S:\\ \{x,y\} \not \subset A_N}
   }\| c_{x,y} \eta(x) (1-\eta(y)) T_{x,y} f \| _\infty  \leq 
 \sum_{
 \substack{
 \{x,y\} \in \cE_S: \\ \{x,y\}\not \subset A_N 
 } }
  c^s_{x,y} \|T_{x,y} f\|_\infty
\leq \e\,.
\en
 Since  $\| T_{x,y} f_n \|_\infty \leq \| T_{x,y} f \|_\infty$,  \eqref{ululato1bis} holds also with $f_n$ instead of $f$ for any $n=1,2,\dots$.
 As a consequence,
  due to the series representation of $h$ and $\cL f_n$ (cf.~\eqref{mammaE_bis}) and due to \eqref{FR2}, we conclude that  for $n\geq N$
 \be\label{FR1bis}
 \begin{split}
 \|h -\cL f_n \|_\infty  & \leq 2 \e+
 \sum_{\{x,y\}\in \cE_S: \{x,y\} \subset A_N } c^s_{x,y} \|  T_{x,y} f  -  T_{x,y}f_n  \|_\infty\\
 &    \leq 2 \e+ 2
 \sum_{\{x,y\}\in \cE_S: \{x,y\} \subset A_N } c^s_{x,y} \sum_{z\not \in A_n} \D_f (z)\,. 
 \end{split}
 \en
Setting $C(N):=\sum_{\{x,y\}\in \cE_S: \{x,y\} \subset A_N } c^s_{x,y} $,  we have proved that $ \|h -\cL f_n \|_\infty  \leq 2 \e+ 2 C(N) \sum _{z\not \in A_n} \D_f(z) $ for $n\geq N$. Hence, by the bound $\vertiii{f}<+\infty$ and the arbitrariness of $\e$,   $\lim_{n \to +\infty} \|h -\cL f_n \|_\infty =0$.}

\begin{Remark}\label{batteria1000bis}
\rot{For later use we stress that, in the above proof, for    any $f$ with 
$\vertiii{f}<+\infty$ and $\vertiii{f}_\star<+\infty$   we have built a sequence $f_n \in \cC\subset \cD(\cL) $ such that $\|f-f_n\|_\infty\to 0$ and $\|\cL f -\cL f_n \|_\infty \to 0$ as $n\to +\infty$.}
\end{Remark}
%

\subsection{Proof of Proposition \ref{volpinobis}}\label{astro}
\rot{We set $\cW:=\{f\in C(\{0,1\}^S)\,:\, \vertiii{S(t)f}<+\infty \text{ and } \vertiii{S(t)f}_\star<+\infty \;\forall t \in \bbR_+\}$. As in  the proof of Proposition \ref{volpino}, due to  Remark \ref{batteria1000bis}, we get that 
if $\cW$ is a core then also $\cC$ is a core. Moreover, due to  \cite[Proposition~3.3]{EK},   to  show that $\cW$ is a core  it is enough to prove the following: (i)
  $\cC $ is dense in $C(\{0,1\}^S)$; (ii) $\cC \subset \cW \subset \cD(\cL)$ and (iii) $S(t) f \in \cW$ for any $f\in \cC$ and $t\in \bbR_+$. Having Proposition \ref{eastpakbis}, the only nontrivial property to check is that $\cC\subset \cW$. So we focus on this.}
   
\rot{First we show
 that, for any $f\in C(\{0,1\}^S)$ and $x\in S$,  it holds  
 \be\label{cannolo}
 \D_{S(t) f}(x) \leq \sum_{y\in S} \D_f(y) \bbP ( x \in  \cC_{t,y}(\cK))\,.
 \en
 Given $x\in S$ let $\xi, \s\in \{0,1\}^S$ be equal expect at most at $x$.
 Then, by denoting by $\bbE$ the expectation w.r.t. $\bbP$,  we get 
  \be\label{checco1}
 \begin{split}
 \left| S(t) f (\xi) - S(t) f (\s)\right | &= \big | \bbE [ f (\eta^\s_t[\cK])]-\bbE  [f (\eta^\xi_t[\cK])] \big| \\
 & \leq 
 \bbE\big[ | f (\eta^\s_t[\cK])- f (\eta^\xi_t[\cK])| \big] \,.
  \end{split}
 \en
 By Remark \ref{rem_latte} and the graphical construction of SEP,  we get that 
  $\eta^\s_t[\cK](y)=\eta^\xi_t[\cK](y)$  if  $x \not \in C_{t,y}(\cK)$. Hence, by  \eqref{eccolo8},  we can bound the last term in \eqref{checco1} by 
 \be\label{checco2}
 \bbE\big[ \sum _{y : x \in \cC_{t,y}(\cK)} \D_f (y)\big]=\sum_{y\in S} \D_f(y) \bbP( x \in  \cC_{t,y}(\cK))\,. 
 \en 
 By combining \eqref{checco1} and \eqref{checco2} we have 
 \be\label{checco3}
 \D_{S(t)f}(x) \leq \sum_{y\in S} \D_f(y) \bbP( x \in  \cC_{t,y}(\cK))\,.
 \en
 From \eqref{checco3} we get 
 \begin{align}
&  \vertiii{S(t) f} \leq \sum _{x\in S}  \sum_{y\in S} \D_f(y) \bbP( x \in  \cC_{t,y}(\cK))= \sum _{y\in S} \D_f(y) \bbE\big[ | \cC_{t,y}(\cK)|\big]\,,\label{checco4}\\
 & \vertiii{S(t) f}_\star \leq \sum _{x\in S}  c_x ^s \sum_{y\in S} \D_f(y) \bbP( x \in  \cC_{t,y}(\cK))= \sum _{y\in S} \D_f(y) \bbE\big[ \sum_{x\in \cC_{t,y}(\cK)} c_x^s \big]\,.\label{checco5}
 \end{align}
 Given $f\in \cC$ we have $\D_f(y)=0$ for all $y$ except a finite set. Hence,  we get 
 $ \vertiii{S(t) f}<+\infty$
  due to \eqref{neve1} and \eqref{checco4}, while  $\vertiii{S(t) f}_\star<+\infty$ due to  \eqref{neve2} and \eqref{checco5}. This completes the proof that $\cC\subset \cW$.
  }
   \subsection{Proof of Proposition \ref{boreale}}\label{aurora85}
   \rot{As already observed after Proposition \ref{volpinobis}, \eqref{neve1} is equivalent to \eqref{danza1} and \eqref{neve2} is equivalent to \eqref{danza2}. Both \eqref{danza1} and \eqref{danza2} can be rewritten as 
      \be\label{boss}
\bbE\big[ \sum_{z\in B_{r,x}}  a _z\big]<+\infty \qquad \forall r\in \bbN\,,  \; x\in S\,,
\en
 where $a_z:=1$ for \eqref{danza1} and $a_z:=c_z^s$ for \eqref{danza2}.}
 
\rot{By definition of $B_{r,x}$, given $x_0\in S$   we have the following equality of events (where $\cK$ is the random object):
\be\label{goccia1}
\{x_0\in B_{r,x}\}= \cup_{x_1,x_2,\dots, x_r\in S} \left( F^0_{x_0,x_1}\cap F^1_{x_1,x_2} \cap \cdots \cap F^{r}_{ x_{r}, x}\right)\,,
\en
where $F^0_{a,b}:= \{ a\longleftrightarrow b\text{ in } \cG^0_{t_0}\}$, $F^1_{a,b}:= \{ a\longleftrightarrow b\text{ in } \cG^1_{t_0}\}$ and so on.
We point out that when $r=0$ \eqref{goccia1} has to be thought of as 
$\{ x_0 \in B_{0,x}  \}
=  F^0_{x_0,x}$.
Since under $\bbP$ the graphs $\cG^0_{t_0}$, $\cG^1_{t_0}$, $\cG^2_{t_0}$,... are i.i.d. and since $\bbP( F^i_{a,b})=p(a,b)$, by 
\eqref{goccia1} and a union bound we have 
\be\label{goccia2}
\bbP\big( x_0 \in B_{r,x}\big) \leq \sum_{x_1 ,x_2,\dots, x_{r}\in S} p(x_0,x_1) p(x_1,x_2) \dots p(x_{r-1}, x_r)p(x_r,x)\,.
\en
Hence we get
\be
\begin{split}
& \bbE\big[ \sum_{x_0 \in B_{r,x}}  a _{x_0}\big]  =\sum_{x_0\in S}\bbP\big( x_0 \in B_{r,x}\big)a_{x_0}\\
&\leq
\sum_{x_0,x_1, x_2\dots, x_r\in S} a_{x_0}p(x_0,x_1) p(x_1,x_2) \dots p(x_{r-1}, x_r)p(x_r,x) \\
& =\sum_{y_{r}, y_{r-1},\dots, y_0\in S}a_{y_r} p(y_r,y_{r-1}) p(y_{r-1},y_{r-2}) \dots p(y_1, y_0)p(y_0,x) \,.
\end{split}
\en
Trivially, by setting $n=r+1$  and $w_{i+1}:=y_i$ and  using  that $p(a,b)=p(b,a)$, the last sum can be written as 
$\sum_{w_1, w_2, \dots, w_n\in S} p(x,w_1)p(w_1,w_2)\cdots p(w_{n-1}, w_{n}) a_{w_{n}}$.}

\rot{By the above observations, we get \eqref{neve1} if \eqref{vento1} holds for all $x\in S$ and $n\in \bbN_+$ and similarly we get \eqref{neve2} if \eqref{vento2} holds for all $x\in S$ and $n\in \bbN_+$.
Finally, it is trivial to check that \eqref{vento1} holds all $x\in S$ and $n\in \bbN_+$ if and only if
\eqref{vento1bis} holds all $x\in S$ and $n\in \bbN$. The same equivalence can be easily checked  for \eqref{vento2} and \eqref{vento2bis}. }

\appendix

\section{Derivation of \eqref{albero}}\label{lingotto}
In this appendix we show that Conditions (3.3) and (3.8) in \cite[Chapter~I.3]{L1} are both equivalent to \eqref{albero}.
In the notation of \cite[Chapter~I.3]{L1}, given $x\not=y$ in $S$, one defines the measure  $c_{\{x,y\}}(\eta, d\z)$ on $\{0,1\}^{\{x,y\}}$ as $c_{x,y}\eta(x)(1-\eta(y)) \d_{(0,1)}(d\z)+ c_{y,x} (1-\eta(x))\eta(y) \d_{(1,0)}(d\z)$, and one sets $c_{T}(\eta, d\z):=0$ for $T\subset S$ with $|T|\not=2$.

  Condition (3.3)  in \cite[Chapter~I.3]{L1}  
  is given by $\sup_{x\in S} \sum_{T\ni x}c_T<+\infty$, where $ c_T= \sup\left \{ c_T( \eta, \{0,1\}^T)\,:\, \eta \in \{0,1\}^S\right\}$. In our case we have 
 \[
  c_T=  \begin{cases} \max\{c_{x,y}, c_{y,x} \} & \text{if }  T=\{x,y\} \text{ for some } x\not =y\,,\\
      0 & \text{otherwise}\,,
    \end{cases}
  \]
  and therefore the above mentioned condition is equivalent to \eqref{albero}.

  Given $u\in S$ and $T\subset S$, as in \cite[Chapter~I.3]{L1}  we define
  \[ c_T(u):=\sup\left\{\| c_T(\eta_1, d\z) -c_T(\eta_2,d\z)\|_{TV}: \eta_1 (z)=\eta_2(z) \text{ for all }z\not=u\right\}\,,
    \]
    where $\|\cdot\|_{TV}$ denotes the total variation norm.
    In our case, if $|T|\not =2$ then $c_T(u)=0$. If $T=\{x,y\}$ with $x\not=y$ in $S$, then
    \[
      c_T(u)= \begin{cases}  \max\{c_{x,y}, c_{y,x} \}  & \text{if } u\in T\,,\\
                0 & \text{otherwise}\,.
              \end{cases}
            \]
 Since Condition (3.8) in  \cite[Chapter~I.3]{L1}  is given by $\sup_{x\in S}\sum_{T\ni x} \sum_{u\not=x}c_T(u)<+\infty$, also this condition is equivalent to \eqref{albero}.

\bigskip

{\bf Acknowledgements}. \rot{I thank the anonymous referee for the corrections and comments}. I am very  grateful to R\'ath Bal\'azs  for the  stimulating discussions during the Workshop ``Large Scale Stochastic Dynamics'' (2022) at  MFO. I kindly acknowledge also  the organizers of this event.  
I thank the very heterogeneous population  (made of different animal and vegetable   species)  of my homes in Codroipo and Rome, where this  work has been written. 


\end{document}